\documentclass[a4paper]{article}
\usepackage{main}

\title{Accelerated Bregman Proximal Gradient Methods from Dual Geometric Perspectives}

\author{
    Yuya Yamashita\thanks{Graduate School of Information Science and Technology, The University of Tokyo, Tokyo, Japan (\href{mailto:yuya-yamashita@g.ecc.u-tokyo.ac.jp}{\texttt{yuya-yamashita@g.ecc.u-tokyo.ac.jp}})}
    \and Shota Takahashi\thanks{Graduate School of Information Science and Technology, The University of Tokyo, Tokyo, Japan (\href{mailto:shota@mist.i.u-tokyo.ac.jp}{\texttt{shota@mist.i.u-tokyo.ac.jp}})}
    \and Akiko Takeda\thanks{Graduate School of Information Science and Technology, The University of Tokyo, Tokyo, Japan (\href{mailto:takeda@mist.i.u-tokyo.ac.jp}{\texttt{takeda@mist.i.u-tokyo.ac.jp}})}~\thanks{Center for Advanced Intelligence Project, RIKEN, Tokyo, Japan}
}
\date{\today}
\begin{document}

\maketitle

\begin{abstract}
    We study Bregman proximal gradient (BPG) algorithms under relative smoothness for convex, relatively strongly convex, and nonconvex objectives.
    Existing accelerated BPG algorithms for convex objectives typically require additional assumptions on Bregman divergences, most notably triangle-scaling conditions, which can lead to slower convergence rates.
    We propose a family of geometry-accelerated BPG algorithms that exploit Bregman geometry in both proximal-gradient and mirror-space updates, without imposing additional geometric conditions such as triangle-scaling conditions.
    Our methods adapt the stepsizes and mirror-space updates through local backtracking and computable acceptance criteria, without a global relative-smoothness constant as input.
    We derive convergence bounds in terms of the parameters accepted during the iterations.
    These bounds yield an $\O(k^{-2})$ rate for convex objectives and a linear rate for relatively strongly convex objectives when the geometry-acceleration parameters remain uniformly bounded.
    For nonconvex objectives, we establish an $\O(k^{-1})$ rate for a stationarity measure without requiring a lower Bregman bound or a full-domain Bregman divergence.
    Numerical experiments on inverse problems, entropy-regularized least squares, D-optimal design, and nonnegative matrix factorization demonstrate faster practical convergence than established Bregman baselines.
\end{abstract}

\section{Introduction}
\label{sec:introduction}

We focus on minimizing the sum of a smooth function and a nonsmooth convex function:
\begin{equation}
    \min_{x\in\cl\Omega}\quad \Phi(x)\coloneq f(x)+\rho(x), \label{eq:problem}
\end{equation}
where $\cl\Omega$ is the closure of a nonempty open convex set $\Omega\subset\R^n$, $f:\R^n\to(-\infty,+\infty]$ is proper, lower semicontinuous, and continuously differentiable on $\Omega$, and $\rho:\R^n\to(-\infty,+\infty]$ is proper, lower semicontinuous, and convex.
Relative smoothness~\citep{bauschke2017descent,bolte2018first,lu2018relatively} replaces the quadratic upper approximation of $f$ with an upper approximation based on a Bregman divergence.
Under this assumption, we study convex, relatively strongly convex, and nonconvex $f$.

\paragraph{Related work.}
Classical first-order methods, including proximal gradient methods, are typically analyzed under Euclidean smoothness (see~\citep{Beck2017-qc,Nesterov2018-gj}), \ie, $\nabla f$ is Lipschitz continuous.
This assumption provides a simple quadratic upper bound on the objective and enables accelerated algorithms with fast convergence rates~\citep{nesterov1983method,beck2009fast}.
However, some objectives fall outside this setting because their curvature is unbounded over the effective domain, for example near its boundary.
Bregman geometry, which also underlies mirror descent~\citep{nemirovski1983problem}, allows upper bounds better adapted to objective curvature, providing a basis for Bregman proximal gradient (BPG) algorithms~\citep{bauschke2017descent,bolte2018first,lu2018relatively}.
BPG algorithms and their variants yield convergence guarantees without global Euclidean smoothness, with applications to Poisson linear inverse problems~\citep{bauschke2017descent} and nonconvex inverse problems such as phase retrieval \citep{bolte2018first,takahashi2022new}, blind deconvolution~\citep{Takahashi2023-uh}, and nonnegative matrix factorization~\citep{Mukkamala2019-mk,Takahashi2026-rv}.
See also Appendix~\ref{app:related-work} for other methods based on Bregman divergences.

\paragraph{Acceleration of BPG.}
In Euclidean geometry, accelerated proximal gradient algorithms attain the $\O(k^{-2})$ convergence rate after $k$ iterations for convex objectives.
Extending such acceleration to Bregman geometry is substantially more difficult.
In fact, under relative smoothness alone, the $\O(k^{-1})$ rate is optimal for a broad class of Bregman first-order algorithms \citep{dragomir2022optimal}.
Therefore, achieving a faster global rate for this broad class requires additional structure beyond relative smoothness.
\citet{hanzely2021accelerated} obtain an $\O(k^{-\gamma})$ rate by imposing a triangle-scaling inequality on the Bregman divergence, where the triangle-scaling exponent $\gamma\in(0,2]$ controls how the divergence scales under linear coupling.
Many subsequent accelerated Bregman algorithms have continued to rely on triangle-scaling conditions~\citep{liu2022dual, savchuk2025accelerated}.
On the other hand, the triangle-scaling exponent can deteriorate substantially for some Bregman divergences; in particular, \citet{hanzely2021accelerated} report that no uniform exponent $\gamma > 1/2$ is available for the Itakura--Saito divergence.
\citet{hanzely2021accelerated} also develop ABPG-g, which adapts gains using local triangle-scaling tests and bounds the objective gap in terms of the intrinsic exponent $\gamma$ and the accepted gains.
Then, the following question naturally arises:

\begin{quote}
    \emph{Can we establish accelerated convergence rates under conditions on the parameters chosen during the iterations, without imposing a global triangle-scaling condition on the Bregman divergence?}
\end{quote}

\paragraph{Contributions.}
We answer this question affirmatively by introducing geometry-accelerated BPG (GA-BPG).
For convex objectives, its acceptance test controls the auxiliary mirror displacement instead of triangle scaling.
We develop three variants, GA-BPGc, GA-BPGsc, and GA-BPGnc, for convex, relatively strongly convex, and nonconvex objectives, respectively.
Our main contributions are summarized as follows:
\begin{itemize}
    \item \textbf{Mirror geometry.} GA-BPG constructs auxiliary points in mirror space to guide subsequent BPG steps.
    Computable criteria determine whether to accept these updates, adjust their parameters, or reset the auxiliary points.
    \item \textbf{Acceleration.} We derive convergence bounds in terms of the parameters accepted during the iterations.
    For convex and relatively strongly convex objectives, these bounds yield $\O(k^{-2})$ and linear convergence, respectively, when the geometry-acceleration parameters remain uniformly bounded.
    For nonconvex objectives, we obtain an $\O(k^{-1})$ bound on a stationarity residual.
    These results recover the corresponding optimal rate orders in Euclidean geometry.
    \item \textbf{Minimal assumptions.} Under the standard BPG assumptions for each regime, we derive convergence bounds that depend on the iterates and parameters generated by the algorithms.
    GA-BPGc requires neither a triangle-scaling condition nor global curvature constants as input.
    GA-BPGnc requires neither a lower Bregman bound nor the assumption $\Omega=\R^n$, relying only on upper relative smoothness and lower boundedness of the objective.
    None of the three algorithms requires a global relative-smoothness constant $L$ as input; GA-BPGsc uses the known relative strong-convexity constant $\mu$.
    \item \textbf{Numerical evaluation.} Experiments on inverse problems, entropy-regularized least squares, D-optimal design, and nonnegative matrix factorization demonstrate faster practical convergence than established Bregman baselines.
\end{itemize}

Table~\ref{tab:comparison} compares our convergence guarantees with existing results.
GA-BPGc and GA-BPGsc attain accelerated rates if the accepted geometry-acceleration parameters are uniformly bounded.

\begin{table}[t]
    \centering
    \caption{
        \textbf{Assumptions and convergence guarantees.}
        “Cond.”: additional conditions; $\mathrm{TS}_{\gamma}$: local triangle-scaling with gains and intrinsic exponent $\gamma\in(0,2]$.
        “No $L$ input”: the global relative-smoothness constant need not be supplied.
        Rates are for objective gaps in convex (cvx) and relatively strongly convex (r-scvx) problems, and stationarity measures in nonconvex (ncvx) problems.
        $\dagger$: rates hold with uniformly bounded geometry-acceleration parameters.
        $\flat$: the bound depends on accepted gains; $C^2$ Bregman generators allow $\gamma=2$, yielding $\O(k^{-2})$ with bounded geometric-mean gains.
    }
    \label{tab:comparison}
    \begin{tabular}{lccccc}
        \toprule
        \textbf{Method}
        & Reference
        & $f$
        & Cond.
        & No $L$ input
        & Rate \\
        \midrule
        BPG
        & \citet{bauschke2017descent}
        & cvx
        & -
        & \xmark
        & $\O(k^{-1})$
        \\

        ABPG-g
        & \citet{hanzely2021accelerated}
        & cvx
        & $\mathrm{TS}_{\gamma}$
        & \xmark
        & $\O(k^{-\gamma})^{\flat}$
        \\

        \textbf{GA-BPGc}
        & Algorithm~\ref{alg:convex-full}
        & cvx
        & -
        & \cmark
        & $\O(k^{-2})^{\dagger}$
        \\
        \midrule
        BPG
        & \cite{bauschke2019linear}
        & r-scvx
        & -
        & \xmark
        & linear
        \\

        \textbf{GA-BPGsc}
        & Algorithm~\ref{alg:strong-full}
        & r-scvx
        & -
        & \cmark
        & $\mathrm{linear}^{\dagger}$
        \\
        \midrule
        BPG
        & \citet{bolte2018first}
        & ncvx
        & -
        & \xmark
        & $\O(k^{-1})$
        \\

        BPGe
        & \citet{zhang2019bregman}
        & ncvx
        & $\Omega = \R^n$
        & \xmark
        & $\O(k^{-1})$
        \\

        \textbf{GA-BPGnc}
        & Algorithm~\ref{alg:nonconvex-full}
        & ncvx
        & -
        & \cmark
        & $\O(k^{-1})$
        \\
        \bottomrule
    \end{tabular}
\end{table}

\paragraph{Notation.}
For vectors, $\langle\cdot,\cdot\rangle$ and $\|\cdot\|$ denote the Euclidean inner product and norm, respectively.
For a matrix $A\in\R^{m\times n}$, $\|A\|_{\F}$ denotes the Frobenius norm.
$\mathbf{1}$ denotes the all-ones vector of appropriate dimension.
Vector and matrix inequalities using $\geq$ or $\leq$ are interpreted componentwise.
For a set $S\subset\R^n$, $\cl S$ and $\interior S$ denote its closure and interior.
The indicator function $\delta_S$ equals $0$ on $S$ and $+\infty$ elsewhere.
For an extended-real-valued function $\psi:\R^n\to[-\infty,+\infty]$, its \emph{effective domain} is $\dom \psi\coloneq \{x\in\R^n:\psi(x)<+\infty\}$, and its \emph{convex conjugate} is $\psi^{\ast}(s)\coloneq \sup_{x\in\R^n}\{\langle s,x\rangle-\psi(x)\}$.
For convex $\psi$, $\partial \psi(x)$ denotes its subdifferential, whose elements are \emph{subgradients}.
The function $\psi$ is proper if $\dom \psi\neq\emptyset$ and $\psi(x)>-\infty$ for all $x\in\R^n$, and a proper lower semicontinuous convex function is \emph{Legendre} if it is essentially smooth and essentially strictly convex.
For a Legendre function $\psi$, the \emph{inverse mirror map} is $\nabla\psi^{\ast}=(\nabla\psi)^{-1}$, with $\nabla\psi^{\ast}(s)=\argmin_{x\in\R^n}\{\psi(x)-\langle s,x\rangle\}$ for $s\in\interior\dom\psi^{\ast}$.
For $u\in\dom\psi$ and $v\in\interior\dom\psi$, define the \emph{Bregman divergence} $D_\psi(u,v)\coloneq \psi(u)-\psi(v)-\langle\nabla\psi(v),u-v\rangle$.
We use the same notation throughout, including for nonconvex $f$.
For each $v\in\interior\dom\psi$, we extend $D_\psi(u,v)$ to $u\in\cl\dom\psi$ by the same formula, allowing the value $+\infty$.
Finally, for $x\in\Omega$, define $\partial\Phi(x)\coloneq \nabla f(x)+\partial\rho(x)$, and call $x$ stationary if $0\in\partial\Phi(x)$.

\section{Proposed algorithm: geometry-accelerated BPG}
\label{sec:setting-methods}

We impose the standing assumptions stated below.
\begin{assumption}[Standing assumptions]
    \label[assumption]{ass:standing}
    The following conditions hold.
    \begin{enumerate}
        \item $\psi:\R^n\to(-\infty,+\infty]$ is a Legendre function with $\Omega = \interior\dom\psi$.
        \item $f:\R^n\to(-\infty,+\infty]$ is proper and lower semicontinuous with $\dom\psi \subset\dom f$, and is continuously differentiable on $\Omega$.
        \item $\rho:\R^n\to(-\infty,+\infty]$ is proper, lower semicontinuous, and convex with $\Omega\cap\dom\rho \neq \emptyset$.
        \item \textbf{Relative smoothness.} \citep{bauschke2017descent, lu2018relatively} $f$ is \emph{relatively smooth} with respect to $\psi$, \ie, there exists $L>0$ such that $D_f(u,v) \leq L D_\psi(u,v)$ holds for all $u,v\in\Omega$.
        \item The objective is bounded below, \ie, $\Phi^\star \coloneq \inf_{x \in \cl \Omega} \Phi(x) > -\infty$.
    \end{enumerate}
\end{assumption}
\cref{ass:standing} is standard in first-order optimization based on Bregman divergences~\citep{bolte2018first,hanzely2021accelerated,takahashi2022new,Takahashi2026-rv}.
Relative smoothness in \cref{ass:standing}(iv) ensures convergence of BPG algorithms and is also called the smooth adaptable property~\citep{bolte2018first}.
Examples of commonly used BPG geometries $\psi$ are listed in \Cref{tab:auxiliary-geometry}.

\paragraph{Geometry-accelerated BPG.}
Gradient descent and proximal gradient descent use squared Euclidean distances to construct subproblems, whereas mirror descent and BPG use Bregman divergences.
However, BPG can converge slowly on~\eqref{eq:problem} in practice.
\citet{dragomir2022optimal} prove $\O(1/k)$ worst-case optimal for their Bregman first-order methods under convexity and relative smoothness alone.
\citet{hanzely2021accelerated} obtain an $\O(1/k^\gamma)$ rate under a uniform triangle-scaling condition with exponent $\gamma\in(0,2]$.
The exponent depends on the Bregman divergence and can be far below $2$, giving slower convergence rates.
For the Itakura--Saito divergence, no positive uniform exponent exists on its effective domain.
ABPG-g adapts gains using local triangle-scaling tests, with a bound determined by the intrinsic exponent $\gamma$ and the accepted gains~\citep{hanzely2021accelerated}.
Our convex methods use acceptance tests on auxiliary mirror displacements to obtain accelerated bounds.

We now present the common template for the three \emph{geometry-accelerated BPG algorithms} (GA-BPG) in~\cref{alg:common-template}: GA-BPGc (\cref{alg:convex-full}) for convex objectives, GA-BPGsc (\cref{alg:strong-full}) for relatively strongly convex objectives, and GA-BPGnc (\cref{alg:nonconvex-full}) for nonconvex objectives.
Each consists of four components: (i) coupling, (ii) BPG update, (iii) criterion, and (iv) mirror update, whose order may vary.
We explain this structure through GA-BPGc; GA-BPGsc and GA-BPGnc are discussed separately in \cref{subsec:strong,subsec:nonconvex}, respectively.

\begin{algorithm}[t]
    \caption{Common GA-BPG template}
    \label{alg:common-template}
    \KwIn{A Legendre function $\phi$; $y_0\in\Omega\cap\dom\rho\cap\interior\dom\phi$; $\lambda_0>0$; $\gamma_+ \geq 1$, $\gamma_->1$.
    }
    $z_0 \gets y_0$\;
    \For{$k=1,2,\ldots$}{
        Initialize $\lambda\gets\gamma_+\lambda_{k-1}$.\;
        \textbf{Coupling.}
        \label{step:coupling}
        Form the coupled iterate $x$ using $y_{k-1}$ and $z_{k-1}$.\;

        \textbf{BPG update.}
        \label{step:bpg-update}
        Compute
        \begin{equation}
            \widehat y \in T_\lambda(x)
            \coloneq
            \argmin_{u\in\cl \Omega}
            \left\{
            \rho(u)
            +\langle\nabla f(x),u-x\rangle
            +\frac{1}{\lambda}D_\psi(u,x)
            \right\},\label{eq:bpg-subproblem}
        \end{equation}
        \If{$D_f(\widehat y,x)>\lambda^{-1}D_\psi(\widehat y,x)$}{
            $\lambda \gets \lambda/\gamma_-$ and retry.\;
        }

        \textbf{Mirror update.}
        \label{step:mirror-update}
        Construct an auxiliary point $\widehat z$ via an inverse mirror map $\nabla \phi^{\ast}$.\;

        \textbf{Criterion.}
        \label{step:criterion}
        Select $y_+$ from $\widehat{y}$ or $y_{k-1}$, and determine whether to accept, retry, or reset $\widehat z$.\;

        $(x_k,y_k,z_k,\lambda_k)\gets(x,y_+,\widehat z,\lambda)$\;
    }
\end{algorithm}

\paragraph{Coupling.}
Let $\lambda>0$ denote the BPG stepsize in~\eqref{eq:bpg-subproblem} and let $\kappa>0$ denote the geometry-acceleration parameter.
GA-BPGc and GA-BPGsc combine $y_{k-1}$ with the auxiliary point $z_{k-1}$:
\begin{equation*}
    \omega = \omega_{k-1} + \alpha, \qquad x = \frac{\omega_{k-1}y_{k-1} + \alpha z_{k-1}}{\omega},
\end{equation*}
where $\omega_k$ is the cumulative weight ($\omega_0=0$).
GA-BPGc backtracks $\kappa$ until acceptance, adjusting the coupling weight and mirror update scale via $\kappa\alpha^2=2\lambda\omega$ ($\alpha>0$).

\paragraph{BPG update.}
All GA-BPG algorithms compute $\widehat y\in T_\lambda(x)$ by solving~\eqref{eq:bpg-subproblem}, where $x$ is the point obtained from the coupling step.
We make the following assumption for the subproblem.
\begin{assumption}\label[assumption]{assumption:subproblem}
    For every $x \in \Omega$ and $\lambda > 0$, subproblem~\eqref{eq:bpg-subproblem} has a minimizer.
\end{assumption}
\cref{assumption:subproblem} holds, for example, if $\psi+\lambda\rho$ is supercoercive for every $\lambda>0$~\citep[Lemma~2(ii)]{bauschke2017descent}.
The solution set $T_\lambda(x)$ is a singleton contained in $\Omega$; see \cref{prop:bpg-unique}.
All GA-BPG algorithms also use backtracking until $\widehat y$ satisfies the relative smoothness inequality
\begin{equation}
    D_f(\widehat y,x)\leq\lambda^{-1}D_\psi(\widehat y,x). \label{eq:local-rs-test}
\end{equation}
The complexity of backtracking is discussed in Appendix~\ref{app:common-backtracking}.

\paragraph{Mirror update.}
Let $g_\lambda(x)\coloneq(\nabla\psi(x)-\nabla\psi(\widehat y))/\lambda$, where $\widehat y\in T_\lambda(x)$.
GA-BPGc uses a Legendre function $\phi:\R^n\to(-\infty,+\infty]$, possibly different from $\psi$, and sets
\begin{equation*}
    \widehat z = \nabla \phi^{\ast}(\nabla \phi(z_{k-1}) - \alpha g_\lambda(x)).
\end{equation*}
Practical guidance on choosing the auxiliary geometry $\phi$, together with representative pairs of BPG and auxiliary geometries $(\psi,\phi)$, is provided by Table~\ref{tab:auxiliary-geometry} in Appendix~\ref{app:auxiliary-geometry}.

\paragraph{Criterion.}
Unlike the other three components, this component handles auxiliary calculations, domain checks, and acceptance decisions at multiple points in each algorithm.
For GA-BPGc and GA-BPGsc, the acceptance inequality keeps the Lyapunov function nonincreasing.
A failed mirror-domain or acceptance check increases $\kappa$ and triggers a new trial; see~\cref{sec:convergence} for details.
On acceptance, both set $y_k=\widehat y$ and record the accepted geometry-acceleration parameter $\kappa_k=\kappa$.

\paragraph{Termination.}
Stopping tests are omitted from the pseudocode.
GA-BPGc and GA-BPGsc return $y_{k-1}$ before the inner search if $y_{k-1}\in T_{\lambda_{k-1}}(y_{k-1})$.
All three algorithms return $x$ whenever a BPG trial gives $\widehat y=x$.
The numerical stopping rules used in the experiments are described in \cref{sec:experiments}.

\begin{algorithm}[t]
    \caption{GA-BPGc}
    \label{alg:convex-full}
    \SetKwFunction{BT}{Backtracking}

    \KwIn{
        A Legendre function $\phi$;
        $y_0\in\Omega\cap\dom\rho\cap\interior\dom\phi$;
        $\lambda_0,\kappa_0>0$;
        $\gamma_+\geq1$; $\gamma_-,\gamma_\kappa>1$.
    }

    $z_0\gets y_0$, $\omega_0\gets0$\;

    \For{$k=1,2,\ldots$}{
    $(\widehat\lambda,\widehat\kappa)\gets(\gamma_+\lambda_{k-1},\kappa_{k-1}/\gamma_+)$\;
    $(\alpha_k,\omega_k,x_k,y_k,z_k,\lambda_k,\kappa_k)\gets$
    \BT{$\omega_{k-1},y_{k-1},z_{k-1},\widehat\lambda,\widehat\kappa$}\;
    }

    \Procedure{\BT{$\omega_-,y,z,\lambda,\kappa$}}{
        $\alpha\gets
            (\lambda+\sqrt{\lambda^2+2\kappa\lambda \omega_-})/\kappa$,
        $\omega\gets \omega_-+\alpha$,
        $x\gets(\omega_-y+\alpha z)/\omega$\label{line:coupling-convex}\Comment*{Coupling}

        $\widehat y\in T_\lambda(x)$
        \Comment*{BPG update}

        \If{$D_f(\widehat y,x)>D_\psi(\widehat y,x)/\lambda$}{
            \Return{\BT{$\omega_-,y,z,\lambda/\gamma_-,\kappa$}}\;
        }

        $p\gets \nabla\phi(z)
            -\alpha(\nabla\psi(x)-\nabla\psi(\widehat y))/\lambda$\;

        \If{$p\notin\interior\dom\phi^{\ast}$}{
            \Return{\BT{$\omega_-,y,z,\lambda,\gamma_\kappa\kappa$}}\;
        }

        $\widehat z\gets\nabla\phi^{\ast}(p)$
        \Comment*{Mirror update}

        \If(\Comment*[f]{Criterion}){
            $\widehat z\notin\Omega$ \textbf{or}
            $D_\phi(z,\widehat z)
                >\omega P_\lambda(x)+\omega_-D_f(y,x)$
        }{
            \Return{\BT{$\omega_-,y,z,\lambda,\gamma_\kappa\kappa$}}\;
        }

        \Return{$(\alpha,\omega,x,\widehat y,\widehat z,\lambda,\kappa)$}\;
    }
\end{algorithm}

\section{Convergence analysis}
\label{sec:convergence}

This section gives complexity bounds for all three algorithms.
For the unique $\widehat y\in T_\lambda(x)$, define
\begin{equation*}
    P_\lambda(x)\coloneq \frac{D_\psi(x,\widehat y)+D_\psi(\widehat y,x)}{\lambda} -D_f(\widehat y,x).
\end{equation*}
The following comparison inequality explains the role of $P_\lambda$.
Its proof is given in Appendix~\ref{app:common-comparison}.
\begin{lemma}[BPG comparison]\label[lemma]{lem:bpg-comparison}
    Suppose that \cref{ass:standing,assumption:subproblem} hold.
    Let $\widehat y\in T_\lambda(x)$, with $x\in\Omega$ and $\lambda>0$.
    For every $u\in \cl\Omega\cap\dom \Phi$,
    \begin{equation}
        \Phi(\widehat y)-\Phi(u) \leq \langle g_\lambda(x),x-u\rangle -P_\lambda(x)-D_f(u,x). \label{eq:bpg-comparison}
    \end{equation}
    If~\eqref{eq:local-rs-test} holds, then $P_\lambda(x)\geq\lambda^{-1}D_\psi(x,\widehat y)\geq0$, with equality $P_\lambda(x)=0$ if and only if $x=\widehat y$.
\end{lemma}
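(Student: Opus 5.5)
The plan is to use the optimality condition of the BPG subproblem~\eqref{eq:bpg-subproblem} together with the three-point identity for Bregman divergences. By \cref{prop:bpg-unique}, $\widehat y\in\Omega$ is unique. Fermat's rule for the strictly convex subproblem then gives
\[
g_\lambda(x)-\nabla f(x)=\frac{\nabla\psi(x)-\nabla\psi(\widehat y)}{\lambda}-\nabla f(x)\in\partial\rho(\widehat y).
\]
This uses that $\psi$ is differentiable at $\widehat y\in\Omega$ and that the sum rule applies, since $\Omega\cap\dom\rho\neq\emptyset$ and $\Omega$ is open.

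Fix $u\in\cl\Omega\cap\dom\Phi$. The subgradient inequality for $\rho$ yields
\[
\rho(\widehat y)-\rho(u)\le\langle g_\lambda(x)-\nabla f(x),\widehat y-u\rangle .
\]
For $f$, I write the two exact identities
\begin{align*}
f(\widehat y)&=f(x)+\langle\nabla f(x),\widehat y-x\rangle+D_f(\widehat y,x),\\
f(u)&=f(x)+\langle\nabla f(x),u-x\rangle+D_f(u,x),
\end{align*}
valid since $D_f(\cdot,x)$ is defined by the same formula. Subtracting them gives $f(\widehat y)-f(u)=\langle\nabla f(x),\widehat y-u\rangle+D_f(\widehat y,x)-D_f(u,x)$. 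Adding this to the $\rho$ inequality cancels the $\nabla f(x)$ terms, leaving
\[
\Phi(\widehat y)-\Phi(u)\le\langle g_\lambda(x),\widehat y-u\rangle+D_f(\widehat y,x)-D_f(u,x).
\]
Next I split $\langle g_\lambda(x),\widehat y-u\rangle=\langle g_\lambda(x),x-u\rangle-\langle g_\lambda(x),x-\widehat y\rangle$. The key computation is
\[
\langle\nabla\psi(x)-\nabla\psi(\widehat y),x-\widehat y\rangle=D_\psi(x,\widehat y)+D_\psi(\widehat y,x),
\]
which follows directly from the definitions. Hence $-\langle g_\lambda(x),x-\widehat y\rangle+D_f(\widehat y,x)=-P_\lambda(x)$, which gives~\eqref{eq:bpg-comparison}.

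The main care point is the case where $u$ lies on the boundary or $D_f(u,x)=+\infty$. If $u\notin\dom f$, then $\Phi(u)=+\infty$, which contradicts $u\in\dom\Phi$. So $f(u)$ is finite, and $D_f(u,x)$ is finite because $x\in\Omega$, where $f$ is differentiable. Thus every term is finite and the subtraction is legitimate; the subgradient inequality for $\rho$ also holds at such $u$.

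For the second claim, \eqref{eq:local-rs-test} gives $P_\lambda(x)\ge\lambda^{-1}D_\psi(x,\widehat y)\ge0$. If $P_\lambda(x)=0$, then $D_\psi(x,\widehat y)=0$ with $x,\widehat y\in\Omega$, and strict convexity of the Legendre function $\psi$ on $\Omega$ forces $x=\widehat y$. Conversely, if $x=\widehat y$, every term in $P_\lambda(x)$ vanishes, so $P_\lambda(x)=0$.
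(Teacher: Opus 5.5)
Your proof is correct and follows the paper's route: you use the subgradient inequality for $\rho$ from the optimality condition~\eqref{eq:bpg-optimality}, the exact Bregman identity for $f(\widehat y)-f(u)$, and the symmetric identity $\langle g_\lambda(x),x-\widehat y\rangle=[D_\psi(x,\widehat y)+D_\psi(\widehat y,x)]/\lambda$, then strict convexity of $\psi$ on $\Omega$ for the equality case. Your explicit check that every term is finite for $u\in\dom\Phi$ is a small addition that the paper leaves implicit.
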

In \Cref{subsec:convex,subsec:strong}, $u$ is a comparison point used only in the analysis.
Taking $u=x^\star$ yields optimality-gap bounds whenever $x^\star$ lies in the comparison set.

\subsection{Convex objectives}\label{subsec:convex}
We have a complexity bound for GA-BPGc for convex objective functions.
\begin{assumption}[Convexity]\label[assumption]{assumption:convex}
    The function $f$ is convex.
\end{assumption}

Take $u\in\mathcal U_\phi$, where $\mathcal U_\phi\coloneq \cl\Omega\cap\dom \Phi\cap\dom \phi$, and define the Lyapunov function as follows:
\begin{equation*}
    \mathcal E_k^{\mathrm C}(u) \coloneq \omega_k(\Phi(y_k)-\Phi(u))+D_\phi(u,z_k).
\end{equation*}
Combining two applications of \cref{lem:bpg-comparison} with the coupling and mirror identities in Appendix~\ref{app:common-identities} yields the following estimate; see Appendix~\ref{app:convex-one-step-proof} for the proof.

\begin{lemma}[Convex one-step estimate]
    \label{lem:convex-one-step}
    Suppose that \cref{ass:standing,assumption:subproblem,assumption:convex} hold.
    For every accepted iteration $k\geq1$ of GA-BPGc and every $u\in\mathcal U_\phi$,
    \begin{equation}
        \mathcal E_k^{\mathrm C}(u)
        -\mathcal E_{k-1}^{\mathrm C}(u)
        \leq
        D_\phi(z_{k-1},z_k)
        -\omega_kP_{\lambda_k}(x_k)
        -\omega_{k-1}D_f(y_{k-1},x_k)
        -\alpha_kD_f(u,x_k).
        \label{eq:convex-one-step-main}
    \end{equation}
\end{lemma}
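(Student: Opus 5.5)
The plan is to take the comparison inequality~\eqref{eq:bpg-comparison} of \cref{lem:bpg-comparison} at the accepted point $x=x_k$ with stepsize $\lambda=\lambda_k$ and $\widehat y=y_k$, applied twice: once with the comparison point $u$ and once with $u=y_{k-1}$. I will then combine the two inequalities with weights $\alpha_k$ and $\omega_{k-1}$, and use the coupling and mirror identities to turn the resulting inner-product term into a telescoping Bregman term. Convexity (\cref{assumption:convex}) is needed only so that the $D_f$ terms on the right-hand side are meaningful as nonpositive contributions. The algebra itself uses nothing beyond \cref{lem:bpg-comparison}.

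First, I will check admissibility of the comparison points. $u\in\mathcal U_\phi\subset\cl\Omega\cap\dom\Phi$ by definition. For $y_{k-1}$: when $k=1$ we have $y_0\in\Omega\cap\dom\rho$, and when $k\ge2$ we have $y_{k-1}\in T_{\lambda_{k-1}}(x_{k-1})\subset\Omega\cap\dom\rho$ by \cref{prop:bpg-unique}. In both cases $y_{k-1}\in\cl\Omega\cap\dom\Phi$. If $k=1$, then $\omega_0=0$ and that term simply drops out. Also, $z_{k-1}$ and $z_k$ lie in $\interior\dom\phi$, since $z_k=\nabla\phi^\ast(p)$ with $p\in\interior\dom\phi^\ast$, and $z_0=y_0\in\interior\dom\phi$. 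Hence every divergence below is well defined, with $D_\phi(u,\cdot)$ possibly $+\infty$. If $D_\phi(u,z_{k-1})=+\infty$ the claimed inequality is trivial, so I will treat that case separately.

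Next, I multiply the inequality for $u$ by $\alpha_k$ and the one for $y_{k-1}$ by $\omega_{k-1}$, and add, using $\omega_k=\omega_{k-1}+\alpha_k$. The left side becomes
\[
\omega_k(\Phi(y_k)-\Phi(u))-\omega_{k-1}(\Phi(y_{k-1})-\Phi(u)).
\]
The right side is
\[
\langle g_{\lambda_k}(x_k),\omega_kx_k-\omega_{k-1}y_{k-1}-\alpha_ku\rangle-\omega_kP_{\lambda_k}(x_k)-\omega_{k-1}D_f(y_{k-1},x_k)-\alpha_kD_f(u,x_k).
\]
The coupling rule $\omega_kx_k=\omega_{k-1}y_{k-1}+\alpha_kz_{k-1}$ collapses the inner product to $\alpha_k\langle g_{\lambda_k}(x_k),z_{k-1}-u\rangle$.

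The remaining step, which I expect to be the only real computation, is the mirror identity. Write $a=z_{k-1}$, $b=z_k$, and $\nabla\phi(b)=\nabla\phi(a)-\alpha_kg$ with $g=g_{\lambda_k}(x_k)$. Expanding definitions gives
\[
D_\phi(u,b)-D_\phi(u,a)=-D_\phi(b,a)+\alpha_k\langle g,u-b\rangle.
\]
Splitting $u-b=(u-a)+(a-b)$ and using $\alpha_k\langle g,a-b\rangle=\langle\nabla\phi(a)-\nabla\phi(b),a-b\rangle=D_\phi(a,b)+D_\phi(b,a)$, this becomes
\[
D_\phi(u,z_k)-D_\phi(u,z_{k-1})=\alpha_k\langle g,u-z_{k-1}\rangle+D_\phi(z_{k-1},z_k).
\]
Adding this to the weighted sum cancels the $\alpha_k\langle g,\cdot\rangle$ terms exactly, and yields~\eqref{eq:convex-one-step-main}.

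The main obstacle is bookkeeping with possibly infinite values. The identity above requires $\phi(u)<\infty$, which holds since $u\in\dom\phi$. I must also ensure no $\infty-\infty$ arises, which is handled by the finite/infinite case split on $D_\phi(u,z_{k-1})$ noted earlier.
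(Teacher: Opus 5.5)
Your proposal is correct and follows the paper's proof essentially step for step. You apply Lemma~\ref{lem:bpg-comparison} at the accepted trial to $y_{k-1}$ and $u$ with weights $\omega_{k-1}$ and $\alpha_k$, use the coupling identity to reduce the inner product to $\alpha_k\langle g_{\lambda_k}(x_k),z_{k-1}-u\rangle$, and cancel it with the mirror identity (the paper's Lemma~\ref{lem:mirror-identity}, which you re-derive correctly). Your case split on $D_\phi(u,z_{k-1})=+\infty$ is unnecessary, since $u\in\dom\phi$ and $z_{k-1}\in\interior\dom\phi$ already make that divergence finite.
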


Stopping checks ensure finitely many BPG solves per iteration (Appendix~\ref{app:convex-finite}).
The acceptance test
\begin{equation}
    D_\phi(z_{k-1},\widehat z) \leq \omega P_\lambda(x) +\omega_{k-1}D_f(y_{k-1},x) \label{eq:convex-certificate}
\end{equation}
uses only the current trial, not $u$, and makes the sum of the first three terms on the right of~\eqref{eq:convex-one-step-main} nonpositive.
Since $D_f(u,x_k)\geq0$ by~\cref{assumption:convex}, every accepted iteration satisfies $\mathcal E_k^{\mathrm C}(u) \leq\mathcal E_{k-1}^{\mathrm C}(u)$.
The coefficient equation gives the following weight-growth bound, converting this decrease into an objective bound; see Appendix~\ref{app:convex-posterior-proof} for the proof.

\begin{lemma}[Growth of the convex weights]
    \label[lemma]{lem:convex-weight-growth}
    Suppose that \cref{ass:standing,assumption:subproblem,assumption:convex} hold.
    For the accepted parameters of GA-BPGc, $\omega_N\geq\frac12\left(\sum_{k=1}^N\sqrt{\lambda_k/\kappa_k}\right)^2$.
\end{lemma}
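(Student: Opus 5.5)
The plan is to work directly with the coefficient equation that defines $\alpha$ in the coupling step of GA-BPGc and turn it into a recursion for $\sqrt{\omega_k}$. In the procedure \texttt{Backtracking}, $\alpha$ is defined as the positive root of $\kappa\alpha^2 - 2\lambda\alpha - 2\lambda\omega_- = 0$, i.e.\ $\kappa\alpha^2 = 2\lambda(\omega_-+\alpha) = 2\lambda\omega$. Backtracking may recompute $\alpha,\omega$ when it changes $\lambda$ or $\kappa$. Still, the returned tuple $(\alpha_k,\omega_k,\lambda_k,\kappa_k)$ is the one from the accepted trial, so for every accepted iteration $k\ge1$ we have
\begin{equation*}
    \kappa_k\alpha_k^2 = 2\lambda_k\omega_k,\qquad \omega_k=\omega_{k-1}+\alpha_k,\qquad \omega_0=0 .
\end{equation*}
Only these relations will be used. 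Convexity and the other assumptions serve only to guarantee that accepted iterations exist and are well defined; this is handled by the finiteness result in Appendix~\ref{app:convex-finite}.

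The key step is the estimate $\sqrt{\omega_k}-\sqrt{\omega_{k-1}}\ge \tfrac12\sqrt{2\lambda_k/\kappa_k}$. Write $\omega_k-\omega_{k-1}=(\sqrt{\omega_k}-\sqrt{\omega_{k-1}})(\sqrt{\omega_k}+\sqrt{\omega_{k-1}})$. Since $\omega_{k-1}\le\omega_k$, the second factor is at most $2\sqrt{\omega_k}$, so
\begin{equation*}
    \sqrt{\omega_k}-\sqrt{\omega_{k-1}}\;\ge\;\frac{\alpha_k}{2\sqrt{\omega_k}} .
\end{equation*}
Next substitute $\alpha_k=\sqrt{2\lambda_k\omega_k/\kappa_k}$ from the coefficient equation. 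This gives $\alpha_k/(2\sqrt{\omega_k}) = \tfrac12\sqrt{2\lambda_k/\kappa_k}=\sqrt{\lambda_k/(2\kappa_k)}$. Here $\omega_k>0$ because $\alpha_k>0$, so the division is legitimate.

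Summing from $k=1$ to $N$ and using $\omega_0=0$ gives
\begin{equation*}
    \sqrt{\omega_N}\ge\frac{1}{\sqrt2}\sum_{k=1}^N\sqrt{\lambda_k/\kappa_k}.
\end{equation*}
Squaring yields $\omega_N\ge\frac12\bigl(\sum_{k=1}^N\sqrt{\lambda_k/\kappa_k}\bigr)^2$, which is the claim.

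There is little real obstacle here. The only point needing care is to confirm that the quantities in the final bound are the accepted ones. Each recursive call of \texttt{Backtracking} recomputes $\alpha$, $\omega$ and $x$ from the current $(\lambda,\kappa)$ and the fixed $\omega_-=\omega_{k-1}$. Hence the accepted $\lambda_k,\kappa_k,\alpha_k,\omega_k$ satisfy the quadratic relation exactly, and $\omega_{k-1}$ is the previously accepted weight. It is also worth checking the closed form $\alpha=(\lambda+\sqrt{\lambda^2+2\kappa\lambda\omega_-})/\kappa$: it is indeed the positive root of $\kappa\alpha^2-2\lambda\alpha-2\lambda\omega_-=0$, and this is what justifies the identity $\kappa_k\alpha_k^2=2\lambda_k\omega_k$.
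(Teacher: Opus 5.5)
Your proposal is correct and follows essentially the same argument as the paper: the factorization $\sqrt{\omega_k}-\sqrt{\omega_{k-1}}=\alpha_k/(\sqrt{\omega_k}+\sqrt{\omega_{k-1}})\geq\alpha_k/(2\sqrt{\omega_k})=\sqrt{\lambda_k/(2\kappa_k)}$ via $\kappa_k\alpha_k^2=2\lambda_k\omega_k$, then summing with $\omega_0=0$ and squaring. Your remark that the accepted tuple satisfies the coefficient equation exactly, because each retry recomputes $\alpha$ from the fixed $\omega_{k-1}$, is the only care the argument needs.
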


Using~\cref{lem:convex-weight-growth}, we obtain the following convergence bound in terms of the accepted stepsizes $\lambda_k$ and geometry-acceleration parameters $\kappa_k$; its proof is given in Appendix~\ref{app:convex-posterior-proof}.
\begin{theorem}[Convergence bound for convex objectives]
    \label{thm:convex-posterior}
    Suppose that \cref{ass:standing,assumption:subproblem,assumption:convex} hold.
    After $N\geq1$ completed iterations of GA-BPGc, for every $u\in\mathcal U_\phi$,
    \begin{equation}
        \Phi(y_N)-\Phi(u) \leq \frac{ 2D_\phi(u,z_0) }{ \left( \sum_{k=1}^N\sqrt{\lambda_k/\kappa_k} \right)^2 }. \label{eq:convex-posterior}
    \end{equation}
    If a minimizer $x^\star\in \cl\Omega$ belongs to $\dom \phi$, this is an optimality-gap bound with $u=x^\star$.
    In particular, $\lambda_k/\kappa_k\geq c>0$ yields $\Phi(y_N)-\Phi^\star=\O(N^{-2})$.
    For Burg--entropy geometry, bounded sublevel sets and an interior minimizer ensure $\sup_k\kappa_k<\infty$ and $\O(k^{-2})$ convergence without triangle scaling (Appendix~\ref{app:sublevel-results}), while Appendix~\ref{app:convex-euclidean} recovers the Euclidean accelerated rate.
\end{theorem}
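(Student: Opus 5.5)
The proof telescopes the Lyapunov decrease and then invokes the weight-growth bound.

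\emph{Step 1 (monotone Lyapunov function).} Fix $u\in\mathcal U_\phi$ and an accepted iteration $k$. Since $\widehat z=z_k$, $\omega=\omega_k$, $x=x_k$ and $\lambda=\lambda_k$ on acceptance, the acceptance test \eqref{eq:convex-certificate} reads
\[
D_\phi(z_{k-1},z_k)\le\omega_kP_{\lambda_k}(x_k)+\omega_{k-1}D_f(y_{k-1},x_k).
\]
Hence the first three terms on the right of \eqref{eq:convex-one-step-main} in \cref{lem:convex-one-step} sum to something nonpositive. By \cref{assumption:convex}, $D_f(u,x_k)\ge0$; here $x_k\in\Omega$ and $u\in\cl\Omega\cap\dom f$, so $D_f(u,x_k)$ is well defined. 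Together these give $\mathcal E_k^{\mathrm C}(u)\le\mathcal E_{k-1}^{\mathrm C}(u)$.

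\emph{Step 2 (telescoping).} Summing over $k=1,\dots,N$ gives $\mathcal E_N^{\mathrm C}(u)\le\mathcal E_0^{\mathrm C}(u)=D_\phi(u,z_0)$, since $\omega_0=0$. Note that $\Phi(y_0)$ is finite because $y_0\in\dom\rho\cap\Omega$, so the term $\omega_0(\Phi(y_0)-\Phi(u))$ is a genuine zero and not an indeterminate $0\cdot\infty$.

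\emph{Step 3 (drop the divergence).} Since $D_\phi(u,z_N)\ge0$, where $z_N\in\interior\dom\phi$ and $\phi$ is convex, we obtain $\omega_N(\Phi(y_N)-\Phi(u))\le D_\phi(u,z_N)+\omega_N(\Phi(y_N)-\Phi(u))\le D_\phi(u,z_0)$.

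\emph{Step 4 (divide by the weight).} If $\Phi(y_N)-\Phi(u)\le0$, the bound \eqref{eq:convex-posterior} is trivial. Otherwise divide by $\omega_N>0$ and apply \cref{lem:convex-weight-growth}, $\omega_N\ge\frac12\bigl(\sum_{k=1}^N\sqrt{\lambda_k/\kappa_k}\bigr)^2$, to get
\[
\Phi(y_N)-\Phi(u)\le\frac{2D_\phi(u,z_0)}{\bigl(\sum_{k=1}^N\sqrt{\lambda_k/\kappa_k}\bigr)^2}.
\]

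\emph{Corollaries.} Taking $u=x^\star$ is legitimate whenever $x^\star\in\dom\phi$, since then $x^\star\in\mathcal U_\phi$ automatically; this gives the optimality-gap bound. If $\lambda_k/\kappa_k\ge c$, the denominator is at least $cN^2$, so $\Phi(y_N)-\Phi^\star\le 2D_\phi(x^\star,z_0)/(cN^2)=\O(N^{-2})$.

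\emph{Main obstacle.} The delicate part is the Burg--entropy claim in the last sentence, which is deferred to the appendix. It requires bounding $\kappa_k$ uniformly. I would use the monotone Lyapunov function to keep $y_k$ in a bounded sublevel set and keep $D_\phi(x^\star,z_k)$ bounded. With an interior minimizer, this keeps $z_k$ bounded away from the boundary. On the resulting compact region, local strong convexity and smoothness constants of $\phi$ and $\psi$ show that the acceptance test holds once $\kappa$ exceeds a fixed threshold, so backtracking caps $\kappa_k$. I would also need $\lambda_k$ to be bounded below, which follows from the relative-smoothness backtracking: $\lambda_k\ge\min\{\lambda_0,1/(\gamma_-L)\}$. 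The main theorem itself, however, is the short telescoping argument above.
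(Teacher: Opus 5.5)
Your proposal is correct and follows the paper's proof essentially verbatim: the acceptance test together with $D_f(u,x_k)\geq0$ makes $\mathcal E_k^{\mathrm C}(u)$ nonincreasing, and telescoping gives $\omega_N(\Phi(y_N)-\Phi(u))+D_\phi(u,z_N)\leq D_\phi(u,z_0)$; dropping $D_\phi(u,z_N)$ and applying the weight-growth lemma then yields the bound. Your sketch of the Burg--entropy claim matches the appendix's strategy in outline, namely iterates kept in a compact subset of $\Omega$ by the Lyapunov bound, local estimates giving acceptance above a uniform $\kappa$ threshold, and the stepsize lower bound, although the appendix must additionally confine all upper-bound-passing trial points, including rejected ones, to a fixed compact set.
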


\subsection{Relatively strongly convex objectives}\label{subsec:strong}
We next consider GA-BPGsc in the relatively strongly convex setting.
\begin{assumption}[Relative strong convexity]
    \label[assumption]{ass:strong}
    $f$ is relatively strongly convex with respect to $\psi$, \ie, there exists a constant $\mu > 0$ such that $D_f(u, v) \geq \mu D_{\psi}(u, v)$ holds for all $u \in \cl \Omega$ and $v \in \Omega$.
\end{assumption}
Take $u\in\mathcal U_\psi$, where $\mathcal U_\psi\coloneq \cl\Omega\cap\dom \Phi\cap\dom \psi$, and define the Lyapunov function as follows:
\begin{equation}
    \mathcal E_k^{\mathrm S}(u) \coloneq \omega_k(\Phi(y_k)-\Phi(u))+\theta_kD_\psi(u,z_k). \label{eq:strong-energy}
\end{equation}
We present GA-BPGsc in~\cref{alg:strong-full} in Appendix~\ref{app:strong} and explain its structure here.
\begin{itemize}
    \item \textbf{Coupling.} Let $\alpha>0$ satisfy $\kappa \alpha^2 = 2 \lambda \omega \theta$, where $\theta=\theta_{k-1} + \mu \alpha$ and $\kappa > 2 \mu \lambda$.
    The weights are initialized with $\omega_0 = 0$, and $\theta_0 = 1$.
    \item \textbf{Mirror update.} Set $\phi=\psi$ in GA-BPGc and replace its mirror update with
    \begin{equation*}
        \widehat z = \nabla \psi^{\ast} \left( \frac{\theta_{k-1} \nabla \psi(z_{k-1}) + \mu \alpha \nabla \psi(x) - \alpha g_\lambda(x)}{\theta} \right).
    \end{equation*}
    \item \textbf{Criterion.} GA-BPGsc accepts the trial if
    \begin{equation}
        \theta D_\psi(z_{k-1},\widehat z)
        -\mu\left(\omega_{k-1}D_\psi(y_{k-1},x)+\alpha D_\psi(z_{k-1},x)\right)
        \leq \omega P_\lambda(x). \label{eq:strong-certificate}
    \end{equation}
    By \cref{lem:strong-one-step}, this ensures that the Lyapunov function $\mathcal E_k^{\mathrm S}(u)$ does not increase.
    With the stopping checks, each iteration finishes after finitely many BPG solves (Appendix~\ref{app:strong-finite}).
\end{itemize}

The coupling and weighted mirror identities in Appendix~\ref{app:common-identities}, together with Lemma~\ref{lem:bpg-comparison} and relative strong convexity, yield the following estimate.
See Appendix~\ref{app:strong-analysis} for the proof.

\begin{lemma}[Relatively strongly convex one-step estimate]
    \label[lemma]{lem:strong-one-step}
    Suppose that \cref{ass:standing,assumption:subproblem,ass:strong} hold.
    For every accepted iteration $k\geq1$ of GA-BPGsc and every $u\in\mathcal U_\psi$,
    \begin{align}
        \mathcal E_k^{\mathrm S}(u)
        -\mathcal E_{k-1}^{\mathrm S}(u)
        &\leq \theta_kD_\psi(z_{k-1},z_k)
        -\omega_kP_{\lambda_k}(x_k)
        \notag\\
        &\quad-\mu\left(\omega_{k-1}D_\psi(y_{k-1},x_k)+\alpha_kD_\psi(z_{k-1},x_k)\right).
        \label{eq:strong-one-step}
    \end{align}
\end{lemma}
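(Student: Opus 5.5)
The plan is to mirror the convex argument: combine two applications of \cref{lem:bpg-comparison} into a weighted inequality, rewrite the resulting inner product through the weighted mirror update using the three-point identity for $D_\psi$, and absorb the leftover $D_\psi(u,x_k)$ term with relative strong convexity. Write $\omega=\omega_k$, $\omega_-=\omega_{k-1}$, $\theta=\theta_k$, $\theta_-=\theta_{k-1}$, $\alpha=\alpha_k$, $x=x_k$, $y=y_{k-1}$, $z=z_{k-1}$, $\widehat y=y_k$, $\widehat z=z_k$, $g=g_{\lambda_k}(x)$. The relations available are $\omega=\omega_-+\alpha$, $\theta=\theta_-+\mu\alpha$, $\omega x=\omega_- y+\alpha z$, and $\theta\nabla\psi(\widehat z)=\theta_-\nabla\psi(z)+\mu\alpha\nabla\psi(x)-\alpha g$.

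\textbf{Step 1: weighted comparison.} I apply \cref{lem:bpg-comparison} at $x$ twice: once with comparison point $u$, multiplied by $\alpha$, and once with comparison point $y$, multiplied by $\omega_-$. The second application is legitimate because $y\in\Omega\cap\dom\rho$, and it can simply be dropped when $\omega_-=0$. Adding the two and using $\omega x-\omega_- y=\alpha z$ gives
\begin{equation*}
\omega(\Phi(\widehat y)-\Phi(u))-\omega_-(\Phi(y)-\Phi(u))\le \alpha\langle g,z-u\rangle-\omega P_\lambda(x)-\omega_- D_f(y,x)-\alpha D_f(u,x).
\end{equation*}
Then \cref{ass:strong} bounds the two $D_f$ terms from below: $D_f(y,x)\ge\mu D_\psi(y,x)$ and $D_f(u,x)\ge\mu D_\psi(u,x)$.

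\textbf{Step 2: the mirror identity.} Substituting the mirror update gives
\begin{equation*}
\alpha g=\theta_-\bigl(\nabla\psi(z)-\nabla\psi(\widehat z)\bigr)+\mu\alpha\bigl(\nabla\psi(x)-\nabla\psi(\widehat z)\bigr),
\end{equation*}
since $\theta=\theta_-+\mu\alpha$. I then use the three-point identity $\langle\nabla\psi(b)-\nabla\psi(a),u-c\rangle=D_\psi(u,a)-D_\psi(u,b)-D_\psi(c,a)+D_\psi(c,b)$ on each piece:
\begin{align*}
\langle\nabla\psi(z)-\nabla\psi(\widehat z),z-u\rangle&=D_\psi(u,z)-D_\psi(u,\widehat z)+D_\psi(z,\widehat z),\\
\langle\nabla\psi(x)-\nabla\psi(\widehat z),z-u\rangle&=D_\psi(u,x)-D_\psi(u,\widehat z)-D_\psi(z,x)+D_\psi(z,\widehat z).
\end{align*}
Combining these, $\alpha\langle g,z-u\rangle$ equals
\begin{equation*}
\theta_- D_\psi(u,z)-\theta D_\psi(u,\widehat z)+\theta D_\psi(z,\widehat z)+\mu\alpha D_\psi(u,x)-\mu\alpha D_\psi(z,x).
\end{equation*}

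\textbf{Step 3: assemble.} I add $\theta D_\psi(u,\widehat z)-\theta_- D_\psi(u,z)$ to both sides to form $\mathcal E_k^{\mathrm S}(u)-\mathcal E_{k-1}^{\mathrm S}(u)$. The $+\mu\alpha D_\psi(u,x)$ from Step 2 cancels exactly against $-\alpha D_f(u,x)\le-\mu\alpha D_\psi(u,x)$. What remains is precisely the right-hand side of~\eqref{eq:strong-one-step}.

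\textbf{Main obstacle: finiteness.} The algebra is routine; the step needing care is checking that every quantity is finite so the identities can be added and subtracted. The point $\widehat z$ lies in $\Omega=\interior\dom\psi$ by the domain checks in the criterion, and $x\in\Omega$ as a convex combination of points of $\Omega$. Since $u\in\mathcal U_\psi\subset\dom\psi$, all the divergences $D_\psi(u,\cdot)$, $D_\psi(z,\cdot)$ and $D_\psi(y,x)$ are then finite, and $\Phi(u),\Phi(y)<\infty$. The three-point identity is valid for $u\in\dom\psi$ and $a,b,c$ with $a,b\in\interior\dom\psi$, so Step 2 holds even for boundary comparison points $u\in\cl\Omega$.
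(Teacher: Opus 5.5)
Your proposal is correct and follows the paper's proof: you combine two applications of the comparison lemma with relative strong convexity (the paper's relative lower-bound corollary) and the coupling identity, then add the weighted mirror identity so that the $\mu\alpha D_\psi(u,x)$ terms cancel. The only cosmetic difference is that you obtain the weighted mirror identity by splitting $\alpha g$ into two gradient differences and applying the three-point identity twice, whereas the paper expands the divergences directly and then specializes to $u=z_{k-1}$.
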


The criterion~\eqref{eq:strong-certificate} uses only the current trial, not $u$, and ensures $\mathcal E_k^{\mathrm S}(u)\leq\mathcal E_{k-1}^{\mathrm S}(u)$.
The coefficient equation links this decrease to an objective bound.
For accepted parameters, $\kappa_k>2\mu\lambda_k$ ensures $\sqrt{2\mu\lambda_k/\kappa_k}\in(0,1)$.
The weights satisfy
\begin{equation}
    \omega_N\geq \omega_1\prod_{k=2}^N \left(1-\sqrt{\frac{2\mu\lambda_k}{\kappa_k}}\right)^{-1}, \qquad N\geq1, \label{eq:strong-A-growth}
\end{equation}
where an empty product equals one.
Using this growth estimate, we obtain the following convergence bound in terms of the accepted ratios $\lambda_k/\kappa_k$; the proofs of the growth estimate and the theorem are given in Appendix~\ref{app:strong-analysis}.

\begin{theorem}[Convergence bound for relatively strongly convex objectives]
    \label{thm:strong-main}
    Suppose that \cref{ass:standing,assumption:subproblem,ass:strong} hold.
    After $N\geq1$ completed iterations of GA-BPGsc, for every $u\in\mathcal U_\psi$,
    \begin{equation}
        \Phi(y_N)-\Phi(u)
        \leq
        \frac{D_\psi(u,z_0)}{\omega_1}
        \prod_{k=2}^N\left(1-\sqrt{\frac{2\mu\lambda_k}{\kappa_k}}\right).
        \label{eq:strong-product}
    \end{equation}
    If a minimizer $x^\star\in\cl\Omega$ belongs to $\dom\psi$, this is an optimality-gap bound with $u=x^\star$.
    In particular, a uniform bound $\sqrt{2\mu\lambda_k/\kappa_k}\geq q_{\min}>0$ yields linear convergence of the objective gap.
    For Burg geometry, the stated assumptions already ensure $\sup_k\kappa_k<\infty$ and linear convergence (Appendix~\ref{app:sublevel-results-strong}), while Appendix~\ref{app:strong-euclidean} recovers the Euclidean accelerated rate.
\end{theorem}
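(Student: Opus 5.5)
The bound \eqref{eq:strong-product} follows by telescoping the Lyapunov decrease and then dividing by the weight-growth estimate \eqref{eq:strong-A-growth}.

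\textbf{Step 1: Lyapunov monotonicity.} Fix $u\in\mathcal U_\psi$. For every accepted iteration $k$, the acceptance test \eqref{eq:strong-certificate} holds with $(\theta,\omega,\alpha,\lambda,x,\widehat z)=(\theta_k,\omega_k,\alpha_k,\lambda_k,x_k,z_k)$. This says exactly that the right-hand side of \eqref{eq:strong-one-step} in \cref{lem:strong-one-step} is nonpositive. Hence $\mathcal E_k^{\mathrm S}(u)\le\mathcal E_{k-1}^{\mathrm S}(u)$ for all $k\ge1$, and telescoping gives
\[
\omega_N(\Phi(y_N)-\Phi(u))+\theta_ND_\psi(u,z_N)\le \mathcal E_0^{\mathrm S}(u)=D_\psi(u,z_0),
\]
using $\omega_0=0$ and $\theta_0=1$. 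Since $\theta_N\ge1>0$ and $D_\psi\ge0$, we can drop that term to get $\Phi(y_N)-\Phi(u)\le D_\psi(u,z_0)/\omega_N$. If $u\notin\dom\psi$ nothing changes, because $u\in\mathcal U_\psi$ already ensures finiteness.

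\textbf{Step 2: the growth estimate \eqref{eq:strong-A-growth}.} This is the main step. The coefficient equation is $\kappa_k\alpha_k^2=2\lambda_k\omega_k\theta_k$, with $\omega_k=\omega_{k-1}+\alpha_k$ and $\theta_k=\theta_{k-1}+\mu\alpha_k$.

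First, I would show $\theta_k\ge\mu\omega_k$. This holds by induction: at $k=0$ we have $\theta_0=1\ge0=\mu\omega_0$, and both sequences increase by $\mu\alpha_k$ and $\alpha_k$ respectively, so in fact $\theta_k=1+\mu\omega_k$.

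Then
\[
\alpha_k^2=\frac{2\lambda_k\omega_k\theta_k}{\kappa_k}\ge \frac{2\mu\lambda_k}{\kappa_k}\,\omega_k^2,
\]
so $\alpha_k\ge q_k\omega_k$ with $q_k\coloneq\sqrt{2\mu\lambda_k/\kappa_k}\in(0,1)$ by $\kappa_k>2\mu\lambda_k$. Therefore
\[
\omega_{k-1}=\omega_k-\alpha_k\le(1-q_k)\,\omega_k,
\]
that is, $\omega_k\ge\omega_{k-1}(1-q_k)^{-1}$ for $k\ge2$. For $k\ge2$ we have $\omega_{k-1}>0$, since $\omega_1=\alpha_1>0$. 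Iterating from $k=2$ to $N$ gives \eqref{eq:strong-A-growth}. The recursion cannot start at $k=1$ because $\omega_0=0$, which is why $\omega_1$ appears explicitly in the bound.

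\textbf{Step 3: combine.} Substituting the growth estimate into Step 1 yields \eqref{eq:strong-product}.

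\textbf{Consequences.} Choosing $u=x^\star$ gives the optimality-gap statement. A uniform lower bound $q_k\ge q_{\min}$ gives $\prod_{k=2}^N(1-q_k)\le(1-q_{\min})^{N-1}$, which is linear convergence. The Burg and Euclidean remarks are deferred to the cited appendices.
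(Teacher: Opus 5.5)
Your proposal is correct and follows the paper's proof essentially step for step. The Lyapunov decrease comes from Lemma~\ref{lem:strong-one-step} combined with the acceptance test \eqref{eq:strong-certificate}, then telescoping to $D_\psi(u,z_0)$ and dropping $\theta_N D_\psi(u,z_N)$, and finally the growth estimate \eqref{eq:strong-A-growth} via $\theta_k=1+\mu\omega_k\ge\mu\omega_k$, $\alpha_k\ge\sqrt{2\mu\lambda_k/\kappa_k}\,\omega_k$ and $\omega_{k-1}\le(1-\sqrt{2\mu\lambda_k/\kappa_k})\,\omega_k$. (Your aside about $u\notin\dom\psi$ is vacuous, since $\mathcal U_\psi\subset\dom\psi$ by definition.)
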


\subsection{Nonconvex objectives}\label{subsec:nonconvex}
For $x\in\Omega\cap\dom\rho$ and $\widehat y\in T_\lambda(x)$, define
\begin{equation*}
    \mathcal R_\lambda(x)
    \coloneq
    -\frac{2}{\lambda}\left(\langle\nabla f(x),\widehat y-x\rangle+\rho(\widehat y)-\rho(x)
    +\lambda^{-1}D_\psi(\widehat y,x)\right).
\end{equation*}
This normalized Bregman forward--backward residual vanishes exactly at interior stationary points and satisfies $\mathcal R_\lambda(x)\geq 2D_\psi(x,\widehat y)/\lambda^2$; see Appendix~\ref{app:common-gap}.
The envelope interpretation is discussed in \citet{ahookhosh2021bregman,fatkhullin2024taming}.
We present GA-BPGnc in~\cref{alg:nonconvex-full} in Appendix~\ref{app:nonconvex} and explain its structure here.
\begin{itemize}
    \item \textbf{Coupling \& mirror update.} GA-BPGnc couples $y_k$ and $y_{k-1}$ by extrapolation in the mirror space induced by a fixed Legendre function $\phi$, with an arbitrary weight $\beta_{k+1}\geq0$:
    \begin{equation*}
        \widehat z = \nabla \phi^{\ast}(\nabla \phi(y_k) + \beta_{k+1}(\nabla\phi(y_k) - \nabla \phi(y_{k-1}))).
    \end{equation*}
    We set $z_k=\widehat z$ if extrapolation is permitted and $\widehat z$ is well defined with $\widehat z\in\Omega\cap\dom\rho$; otherwise, we set $z_k=y_k$ and start the next BPG update from $y_k$.
    \item \textbf{Criterion.} GA-BPGnc sets $y_k=\widehat y$ if $\Phi(\widehat y)<\Phi(y_{k-1})$, and $y_k=y_{k-1}$ otherwise.
    GA-BPGnc rejects the extrapolation if $\delta_k < \frac{\sigma\lambda}{2}\mathcal R_\lambda(x)$, where $\delta_k \coloneq \Phi(y_{k-1}) - \Phi(\widehat{y})$ and $\sigma \in (0,1)$ is the reset parameter.
\end{itemize}

Take a nonterminal iteration, for which $\mathcal R_{\lambda_k}(x_k)>0$, and define $[t]_+\coloneq\max(t,0)$ for $t\in\R$ and
\begin{equation*}
    \tau_k\coloneq\frac{2\delta_k}{\lambda_k\mathcal R_{\lambda_k}(x_k)}.
\end{equation*}
The acceptance rule relates the objective decrease to $\frac{\lambda_k}{2}\mathcal R_{\lambda_k}(x_k)$; the reset rule then ensures $\tau_{k+1} \geq 1$ whenever $\tau_k < \sigma$ and the next iteration is nonterminal.
Its proof is given in~\cref{app:nonconvex-descent}.

\begin{lemma}[Descent and a safe next iteration]
    \label[lemma]{lem:nonconvex-descent}
    Suppose that \cref{ass:standing,assumption:subproblem} hold.
    Every nonterminal iteration of GA-BPGnc satisfies
    \begin{equation}
        \Phi(y_{k-1})-\Phi(y_k) = [\delta_k]_+ = \frac{\lambda_k}{2}[\tau_k]_+\mathcal R_{\lambda_k}(x_k). \label{eq:nonconvex-descent}
    \end{equation}
    If $x_k=y_{k-1}$, then $\tau_k\geq1$; if $\tau_k<\sigma$, then $x_{k+1}=y_k$; hence $\tau_{k+1}\geq1$ if step $k+1$ is nonterminal.
\end{lemma}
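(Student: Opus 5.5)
The plan is to unpack the definitions of GA-BPGnc and check the three assertions in turn, as the statement suggests.

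\textbf{Descent identity.} The first equality in~\eqref{eq:nonconvex-descent} follows from the criterion. If $\Phi(\widehat y)<\Phi(y_{k-1})$, then $y_k=\widehat y$ and $\delta_k>0$, so $\Phi(y_{k-1})-\Phi(y_k)=\delta_k=[\delta_k]_+$. Otherwise $y_k=y_{k-1}$, the left side is $0$, and $[\delta_k]_+=0$. The second equality is the definition of $\tau_k$: on a nonterminal iteration $\mathcal R_{\lambda_k}(x_k)>0$, so $\delta_k=\frac{\lambda_k}{2}\tau_k\mathcal R_{\lambda_k}(x_k)$. Taking positive parts of both sides and using $\lambda_k\mathcal R_{\lambda_k}(x_k)>0$ gives the claim.

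\textbf{The case $x_k=y_{k-1}$.} Set $x=x_k=y_{k-1}$, $\lambda=\lambda_k$, $\widehat y=y$-trial.
\begin{itemize}
\item Backtracking guarantees~\eqref{eq:local-rs-test}, which reads $f(\widehat y)\le f(x)+\langle\nabla f(x),\widehat y-x\rangle+\lambda^{-1}D_\psi(\widehat y,x)$.
\item Adding $\rho(\widehat y)-\rho(x)$ to both sides gives
\begin{equation*}
\Phi(\widehat y)-\Phi(x)\le \langle\nabla f(x),\widehat y-x\rangle+\rho(\widehat y)-\rho(x)+\lambda^{-1}D_\psi(\widehat y,x)=-\tfrac{\lambda}{2}\mathcal R_\lambda(x).
\end{equation*}
\item Since $x=y_{k-1}$, this says $\delta_k\ge\frac{\lambda}{2}\mathcal R_\lambda(x)$, i.e.\ $\tau_k\ge1$.
\end{itemize}
This needs $x\in\dom\rho$ so that $\rho(x)$ is finite. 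That holds because $y_{k-1}$ is either $y_0\in\dom\rho$ or an accepted BPG output with finite objective value. I should confirm this from the pseudocode, in particular that $\Phi(y_{k-1})<\infty$.

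\textbf{Reset implication.} If $\tau_k<\sigma$, then $\delta_k<\frac{\sigma\lambda_k}{2}\mathcal R_{\lambda_k}(x_k)$, which is exactly the rejection condition. The algorithm then sets $z_k=y_k$, and the next coupling starts from $z_k$, so $x_{k+1}=y_k$. Applying the previous case at index $k+1$, when that step is nonterminal, gives $\tau_{k+1}\ge1$.

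\textbf{Main obstacle.} The delicate point is bookkeeping against the precise pseudocode of Algorithm~\ref{alg:nonconvex-full}. Two things need checking:
\begin{itemize}
\item The rejection at step $k$ really forces the next BPG base point $x_{k+1}$ to equal $y_k$, and not a freshly extrapolated point.
\item Later backtracking at step $k+1$ (which changes $\lambda$) does not disturb $x_{k+1}=y_k$.
\end{itemize}
Because the argument above holds for whatever accepted $\lambda$ satisfies~\eqref{eq:local-rs-test}, the stepsize changes should be harmless.
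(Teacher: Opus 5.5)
Your proposal is correct and follows essentially the same route as the paper: the descent identity comes straight from the acceptance rule and the definition of $\tau_k$, and the case $x_k=y_{k-1}$ is exactly the descent inequality~\eqref{eq:gap-descent} of Proposition~\ref{prop:bpg-model-gap}, which you rederive inline from~\eqref{eq:local-rs-test}. The reset implication is read off the pseudocode, where $x_{k+1}=z_k=y_k$ stays fixed throughout backtracking, and the feasibility bookkeeping you flag ($y_{k-1}\in\Omega\cap\dom\rho$) is supplied by Proposition~\ref{prop:nonconvex-well-defined}.
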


Since $\frac{\lambda_k}{2}[\tau_k]_+\mathcal R_{\lambda_k}(x_k)\geq0$, the acceptance rule ensures $\Phi(y_k) \leq \Phi(y_{k-1})$.
Summing these decreases gives a stationarity bound in terms of $\sum_{k\in\mathcal A_N}\tau_k\lambda_k$, with $\mathcal A_N$ defined below.
The reset rule and stepsize lower bound control this sum from below, yielding an $\O(N^{-1})$ worst-case bound.
The following theorem summarizes these bounds; see Appendices~\ref{app:nonconvex-posterior},~\ref{app:nonconvex-worstcase}, and~\ref{app:nonconvex-cost} for the proof.

\begin{theorem}[Stationarity bounds for nonconvex objectives]
    \label{thm:nonconvex-main}
    Suppose that \cref{ass:standing,assumption:subproblem} hold and GA-BPGnc has not returned a stationary point in its first $N \geq 2$ iterations.
    Let $\mathcal A_N\coloneq\{1\leq k\leq N:\delta_k>0\}$ and $\underline\lambda\coloneq \min\{\lambda_0,1/(\gamma_-L)\}>0$.
    Then
    \begin{equation}
        \min_{k\in\mathcal A_N}
        \mathcal R_{\lambda_k}(x_k)
        \leq
        \frac{2(\Phi(y_0)-\Phi^\star)}{\sum_{k\in\mathcal A_N}\tau_k\lambda_k}
        \leq
        \frac{4(\Phi(y_0)-\Phi^\star)}{\sigma\underline\lambda(N-1)}.
        \label{eq:nonconvex-posterior}
    \end{equation}
    Thus a residual at most $\varepsilon^2$ is obtained within $\O(\varepsilon^{-2})$ outer iterations and BPG subproblem solves.
\end{theorem}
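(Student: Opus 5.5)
The proof has three steps: telescope the exact descent identity of \cref{lem:nonconvex-descent}, bound the resulting $\tau$-weighted stepsize sum from below using the reset mechanism, and count BPG solves from the backtracking rule.

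\textbf{Step 1 (first inequality).} Summing \eqref{eq:nonconvex-descent} over $k=1,\dots,N$ telescopes the left side to $\Phi(y_0)-\Phi(y_N)\le\Phi(y_0)-\Phi^\star$, using $\Phi(y_N)\ge\Phi^\star$ from \cref{ass:standing}(v). On the right, only indices with $\delta_k>0$, i.e.\ $k\in\mathcal A_N$, contribute, and for them $[\tau_k]_+=\tau_k>0$. This gives
\[
\sum_{k\in\mathcal A_N}\frac{\lambda_k\tau_k}{2}\,\mathcal R_{\lambda_k}(x_k)\le\Phi(y_0)-\Phi^\star .
\]
Bounding each residual below by $\min_{k\in\mathcal A_N}\mathcal R_{\lambda_k}(x_k)$ and dividing yields the first inequality in \eqref{eq:nonconvex-posterior}, provided $\sum_{k\in\mathcal A_N}\tau_k\lambda_k>0$. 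That positivity follows from Step 2.

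\textbf{Step 2 (lower bound on $\sum\tau_k\lambda_k$).} Two ingredients are needed.

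\emph{Stepsize floor.} Every accepted $\lambda_k\ge\underline\lambda$. Any trial with $\lambda\le 1/L$ passes \eqref{eq:local-rs-test}, because relative smoothness gives $D_f\le LD_\psi\le\lambda^{-1}D_\psi$. Trials start at $\gamma_+\lambda_{k-1}\ge\lambda_{k-1}$ and shrink by $\gamma_-$ only after a failure, which requires $\lambda>1/L$. Induction on $k$ then gives $\lambda_k\ge\min\{\lambda_0,1/(\gamma_-L)\}$.

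\emph{Pairing.} Consider the consecutive pairs $(k,k+1)$ for $k=1,\dots,N-1$. In each pair, at least one index $j$ has $\tau_j\ge\sigma$:
\begin{itemize}
\item if $\tau_k\ge\sigma$, take $j=k$;
\item otherwise $\tau_k<\sigma$, and \cref{lem:nonconvex-descent} gives $\tau_{k+1}\ge1\ge\sigma$, since step $k+1\le N$ is nonterminal by hypothesis; take $j=k+1$.
\end{itemize}
Any index with $\tau_j\ge\sigma>0$ has $\delta_j>0$, so $j\in\mathcal A_N$. Grouping $\{1,\dots,N\}$ into $\lfloor N/2\rfloor\ge(N-1)/2$ disjoint consecutive pairs gives at least $(N-1)/2$ distinct indices in $\mathcal A_N$ with $\tau_j\ge\sigma$. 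All terms of the sum are positive, so
\[
\sum_{k\in\mathcal A_N}\tau_k\lambda_k\ge\frac{\sigma\underline\lambda(N-1)}{2},
\]
which gives the second inequality.

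This counting is the only delicate point; I expect it to be the main obstacle. The pairs must be disjoint so that no index is counted twice. Also, $\tau_{k+1}\ge1$ relies on the reset forcing $x_{k+1}=y_k$ and on the lemma's first claim; that claim uses that the BPG step from $x=y_{k-1}$ decreases $\Phi$ by at least $\frac{\lambda}{2}\mathcal R_\lambda$, which is taken as given.

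\textbf{Step 3 (complexity).} Setting the bound $\le\varepsilon^2$ gives $N=\O(\varepsilon^{-2})$ outer iterations. For BPG solves:
\begin{itemize}
\item each failed trial divides $\lambda$ by $\gamma_-$;
\item each outer iteration multiplies the starting $\lambda$ by $\gamma_+$;
\item $\lambda$ never drops below $\underline\lambda$.
\end{itemize}
Hence the total number of backtracking failures over $N$ iterations is at most $N\log_{\gamma_-}\gamma_++\log_{\gamma_-}(\lambda_0/\underline\lambda)+\O(1)$. The total number of solves is therefore $\O(N)=\O(\varepsilon^{-2})$.
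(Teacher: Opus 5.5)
Your proposal is correct and follows the paper's proof: you telescope the descent identity of Lemma~\ref{lem:nonconvex-descent}, combine the reset-induced two-iteration bound with the stepsize floor $\lambda_k\geq\underline\lambda$, and count backtracking failures multiplicatively. The only difference is cosmetic: you use $\lfloor N/2\rfloor$ disjoint pairs, whereas the paper sums the overlapping inequality $[\tau_k]_++[\tau_{k+1}]_+\geq\sigma$ over $k=1,\dots,N-1$ and divides by two. The $\lambda_{\max}$ cap you omitted is harmless, because $\lambda_{k-1}\leq\lambda_{\max}$ keeps the initial trial at least $\lambda_{k-1}$.
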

The first bound in~\eqref{eq:nonconvex-posterior} depends on the values of $\tau_k$ and $\lambda_k$ obtained during the iterations.
The second follows from the reset rule and the stepsize lower bound; it depends only on $N$, the initial objective gap, $\sigma$, and $\underline\lambda$.
Unlike Sections~\ref{subsec:convex} and~\ref{subsec:strong}, the proof sums objective decreases rather than using comparison-point Lyapunov bounds.
Under a uniform subgradient comparison, the residual bound implies a norm-based stationarity bound (Appendix~\ref{app:nonconvex-stationarity}).

\section{Numerical experiments}
\label{sec:experiments}

We compare GA-BPG variants with Bregman baselines using objective gaps versus iterations and wall-clock time.
For an objective $\Phi$ and reference value $\Phi_{\mathrm{ref}}$, we plot the normalized objective gap $(\Phi(x_k)-\Phi_{\mathrm{ref}})/(\Phi(x_0)-\Phi_{\mathrm{ref}})$, where $x_k$ is the reported iterate.
The reference is a known optimal value, an empirical objective value, or a certified lower bound, as indicated below.
GA-BPGc and GA-BPGsc use $\gamma_+=1/0.9$, $\gamma_-=2$, and $\gamma_\kappa=1.5$.
GA-BPGnc uses $\gamma_+=1.1$, $\gamma_-=2$, $\sigma=0.5$, and $\lambda_{\max}=10^6\lambda_0$.
Additional experiments and more discussions are given in Appendices~\ref{app:additional-experiments} and~\ref{app:discussions}, respectively.
All experiments were run in Python 3.12.3 on an Intel Core i7-13620H under Ubuntu 24.04 (WSL2).

\paragraph{Convex Poisson.}
Given $A\in\R^{m\times n}$ and $b\in\R^m$, we consider the convex Poisson inverse problem $\min_{x \in \R^n}\Phi_0(x)\coloneq \frac{1}{m}D_{\mathrm{KL}}(b,Ax) + \delta_{[0,10^3]^n}(x)$, where $D_{\mathrm{KL}}(u,v) = \sum_i(u_i\log\frac{u_i}{v_i}-u_i+v_i)$ is the Kullback--Leibler divergence.
We use the NYTimes bag-of-words data \citep{newman2008bagofwords}, which originally contain $300{,}000$ documents and $102{,}660$ terms.
After removing empty rows and columns and scaling each retained row to sum to $100$, we obtain a sparse matrix $A$ with $m=299{,}752$ and $n=101{,}636$.
For each seed, we generate a positive planted vector $x^\star\in(0,10^3)^n$ and set $b=Ax^\star$, \ie, $\Phi_0^\star=\Phi_0(x^\star)=0$, which we use as the reference value.
All methods use the Burg geometry $\psi(x)\coloneq-\sum_j\log x_j$ for~\eqref{eq:bpg-subproblem}.
GA-BPGc additionally uses the negative-entropy auxiliary geometry $\phi(x)=\sum_j(x_j\log x_j-x_j)$ for its mirror update.
All methods start from $y_0=\mathbf{1}$.
Runs stop upon meeting algorithm-specific stopping criteria or reaching $5{,}000$ iterations or $1{,}800$ seconds.
Figure~\ref{fig:convex-nytimes} compares GA-BPGc with BPG using linesearch (BPG-LS) and ABPG-g \citep{hanzely2021accelerated}, with dotted bounds from~\eqref{eq:convex-telescope}.
Our algorithm outperforms the baselines.

\begin{figure}[!tp]
    \centering
    \includegraphics[width=0.9\linewidth]{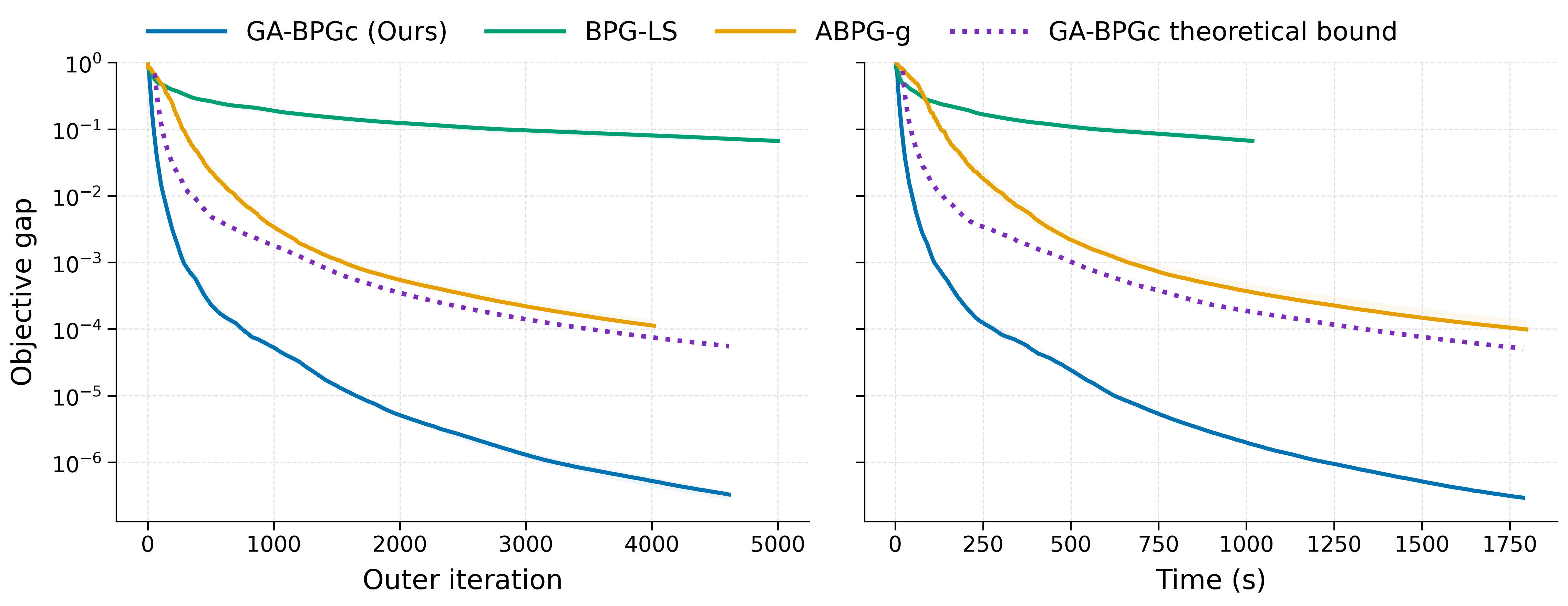}
    \caption{Log plot of the normalized objective gap for convex Poisson on the NYTimes data.}
    \label{fig:convex-nytimes}
\end{figure}

\paragraph{Relatively strongly convex objectives.}
Motivated by stacked regression \citep{breiman1996stacked} and entropy-penalized convex aggregation \citep{koltchinskii2009sparse}, we consider least-squares aggregation with a Kullback--Leibler penalty toward uniform weights: $\min_{w\in\Delta_d}F_\mu(w)\coloneq\frac{1}{2m}\|Hw-b\|^2+\mu D_{\mathrm{KL}}(w,\pi)$, where $\pi=\frac{1}{d}\mathbf{1}$ and $\Delta_d = \left\{ w\in\R^d: w\geq0,\ \mathbf{1}^\top w=1 \right\}$, $b\in\R^m$ is the response vector, and $H\in\R^{m\times d}$ is a dense out-of-fold prediction matrix constructed from simulated Gaussian regression data using 10-fold cross-validation.
The columns of $H$ comprise $1600$ backward-selected least-squares predictors and $1600$ ridge predictors, giving $m=2400$ and $d=3200$.
Working on the affine hull of $\Delta_d$, all algorithms use $f=F_\mu$, $\rho=0$, and $\psi(w)=\sum_j w_j\log w_j$, with $f,\psi=+\infty$ outside $\Delta_d$.
The objective is $\mu$-strongly convex relative to $\psi$; GA-BPGsc uses $\phi=\psi$ with the normalized exponential inverse mirror map.
All methods use $w_0=\pi$ and $\mu=10^{-3}$.
Runs stop upon meeting algorithm-specific criteria or reaching $10{,}000$ iterations or $600$ seconds.
Figure~\ref{fig:strong-aggregation} compares GA-BPGsc with BPG-LS and ABPG-g \citep{hanzely2021accelerated}, with dotted bounds from~\eqref{eq:strong-telescope}.
For each seed, we define $F_{\mathrm{ref}}$ as the smallest objective value over all recorded primary iterates of the three methods.
GA-BPGsc reaches high-accuracy solutions fastest in wall-clock time.

\begin{figure}[!tp]
    \centering
    \includegraphics[width=0.9\linewidth]{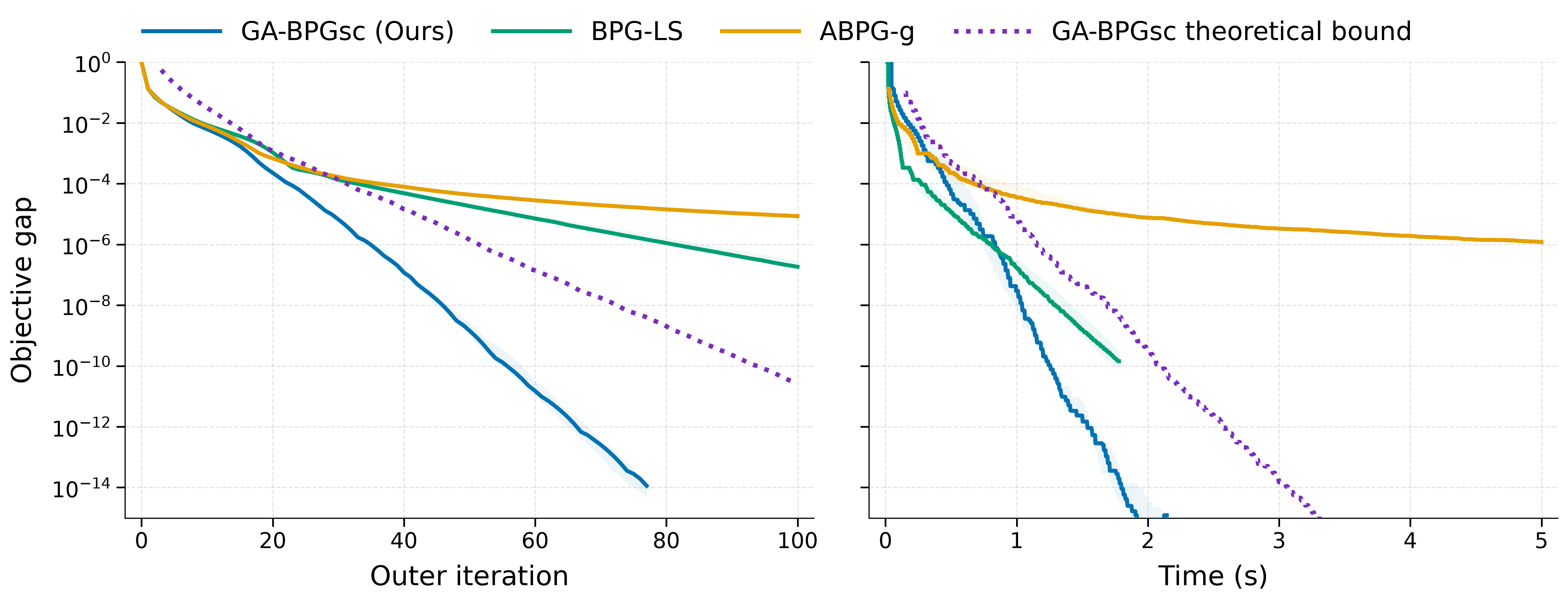}
    \caption{Log plot of the normalized objective gap for entropy-regularized least squares with $\mu=10^{-3}$.}
    \label{fig:strong-aggregation}
\end{figure}

\paragraph{Nonnegative matrix factorization.}
We factorize the MovieLens 100K ratings by solving $\min_{W\geq0,\,H\geq0}F(W,H)\coloneq\frac{1}{2|\mathcal O|}\sum_{(i,j)\in\mathcal O}(W_{i:}H_{:j}-r_{ij})^2$, where $\mathcal O$ contains the $100{,}000$ observed ratings, $W\in\R^{943\times16}$, and $H\in\R^{16\times1682}$.
Missing ratings are excluded from the objective.
We compare GA-BPGnc with BPG \citep{bolte2018first}, BPGe \citep{zhang2019bregman}, and CoCaIn-BPG \citep{mukkamala2020convex}.
All methods use $\psi(x)=\frac14(\|W\|_{\F}^2+\|H\|_{\F}^2)^2+\frac12(\|W\|_{\F}^2+ \|H\|_{\F}^2) $; GA-BPGnc also uses $\phi=\psi$.
BPG and BPGe use the step $1/L$, and GA-BPGnc and CoCaIn-BPG use their respective adaptive rules.
The five seeds specify common initializers for all methods.
Each method runs for at most $100$ seconds.
For each seed, $F_{\mathrm{ref}}$ is the smallest objective recorded across all four methods during these runs.
Figure~\ref{fig:nmf} shows the median and interquartile range of the normalized empirical objective gap over the first $90$ seconds.
GA-BPGnc reaches smaller empirical gaps than the baselines in both outer iterations and wall-clock time.

\begin{figure}[!tp]
    \centering
    \includegraphics[width=0.9\linewidth]{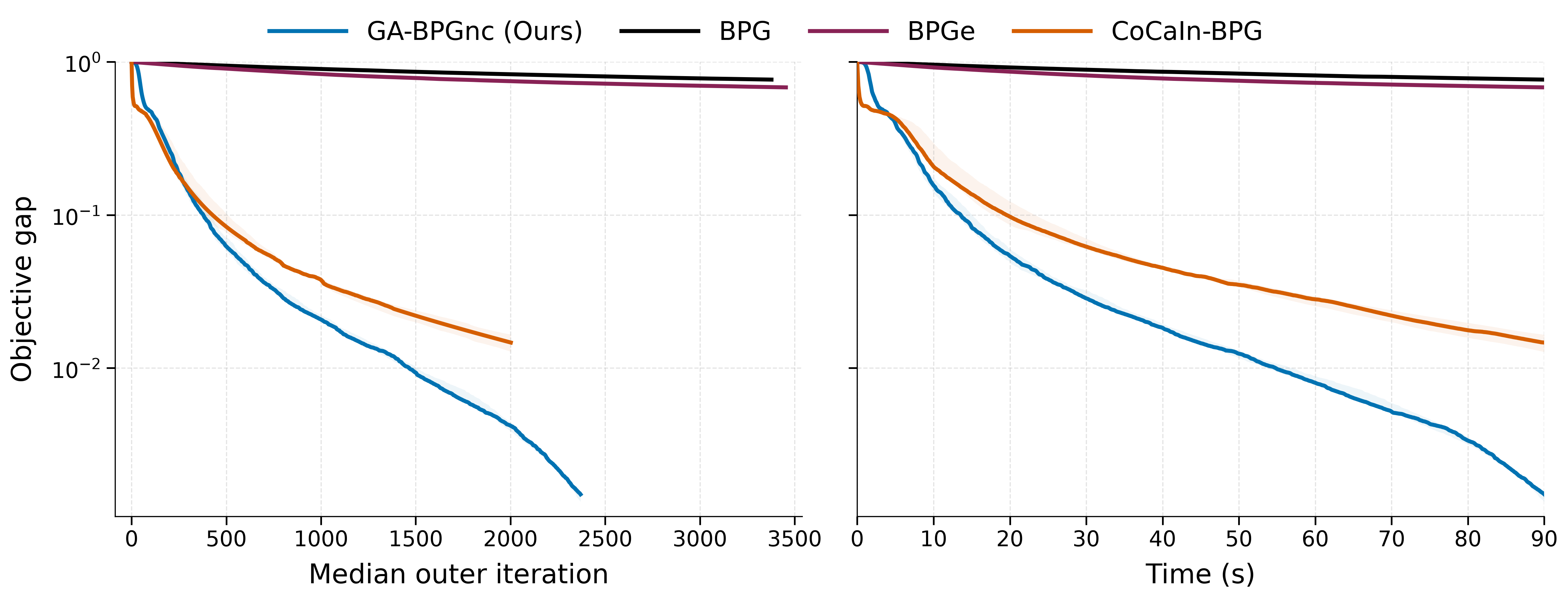}
    \caption{Log plot of the normalized empirical objective gap for nonnegative matrix factorization on MovieLens 100K.}
    \label{fig:nmf}
\end{figure}

\section{Conclusion}
\label{sec:conclusion}

We proposed GA-BPG, a family of BPG algorithms with auxiliary mirror updates, for convex, relatively strongly convex, and nonconvex objectives.
Auxiliary mirror points guide BPG steps, with computable tests for acceptance, parameter updates, and resets.
Under standard BPG assumptions, GA-BPGc and GA-BPGsc attain $\O(k^{-2})$ and linear convergence, respectively, when $\sup_k\kappa_k<\infty$.
For GA-BPGnc, we established an $\O(k^{-1})$ bound on a stationarity residual without assuming $\Omega=\R^n$.
The algorithms require no triangle-scaling condition or prior knowledge of the global relative-smoothness constant.
Experiments show faster convergence than the Bregman baselines in all three regimes.

We proved that $\kappa_k$ remains uniformly bounded for Burg entropy and squared Euclidean distances.
For general Bregman divergence, however, $\kappa_k$ may grow without bound, which can cause the convergence rates to deteriorate.
Identifying global conditions that characterize the behavior of $\kappa_k$ and developing methods to avoid this deterioration remain topics for future work.

\section*{Acknowledgements}
This project has been funded by the Japan Society for the Promotion of Science (JSPS); JSPS KAKENHI Grant Number JP23K28041 and JP25K21156.

\bibliographystyle{plainnat}
\bibliography{main}

\appendix

\section{Additional related work}
\label{app:related-work}

\paragraph{Relative smoothness and BPG.}
\citet{bauschke2017descent} replace a Euclidean quadratic upper bound by a Bregman upper bound, allowing proximal-gradient updates beyond globally Lipschitz gradients.
\citet{lu2018relatively} develop relative smoothness and relative strong convexity and analyze first-order algorithms under these conditions.
\citet{Ou2026-wy} develop adaptive BPG algorithms for convex composite problems under local relative smoothness, selecting stepsizes from local curvature estimates without backtracking linesearch.
\citet{Malitsky2025-np} propose Polyak-type stepsizes for entropic mirror descent on consistent linear systems, establishing convergence guarantees and quantifying the implicit bias toward solutions with small $\ell_1$ norm when initialized near the origin.
\citet{Kunstner2026-fe} analyze mirror Polyak methods based on Bregman projections, with rates adapting to relative smoothness and relative strong convexity, and use a primal--dual lifting to avoid requiring the optimal value for structured convex problems.
\citet{De-Marchi2026-yo} establish $\O(\log k/k)$ objective convergence rates for mirror descent with logarithmic barriers, including cases with boundary minimizers where standard Bregman-distance bounds are infinite.
Beyond proximal methods, \citet{Takahashi2026-ch} develop Frank--Wolfe algorithms with adaptive Bregman stepsizes for relatively smooth convex and weakly convex objectives.

\paragraph{Acceleration and its limitations.}
The Euclidean accelerated proximal-gradient rate \citep{nesterov1983method,beck2009fast} does not extend uniformly to arbitrary relatively smooth problems.
\citet{dragomir2022optimal} establish a matching $\O(1/k)$ upper and lower complexity order for the Bregman first-order oracle class considered there.
\citet{Kim2023-yd} establish mirror duality between mirror-descent-type methods for reducing objective values and gradient magnitudes, and derive a dual accelerated mirror descent method for convex objectives with Lipschitz gradients and strongly convex distance-generating functions.
\citet{hanzely2021accelerated} obtain accelerated rates under triangle-scaling conditions and develop algorithms that adjust gains during the iterations and bound the objective gap in terms of the accepted gains.
GA-BPG instead tests the auxiliary mirror displacement through its acceptance inequality.
Our bounds yield $\O(k^{-2})$ and linear rates when the accepted ratios $\lambda_k/\kappa_k$ are bounded away from zero.

\paragraph{Relative strong convexity and linear convergence.}
Relative strong convexity supplies a lower bound in the same Bregman geometry as the upper relative-smoothness bound \citep{lu2018relatively}.
Linear convergence beyond the usual Euclidean strong-convexity and Lipschitz-gradient assumptions is also studied by \citet{bauschke2019linear}, using generalized gradient-domination conditions.
Our analysis of GA-BPGsc assumes relative strong convexity and uses the known constant $\mu$ in both the weighted mirror update and the acceptance inequality.
Its product bound yields a uniform linear rate when the accepted ratios $\lambda_k/\kappa_k$ are bounded away from zero.

\paragraph{Nonconvex Bregman algorithms.}
\citet{bolte2018first} analyze nonconvex BPG algorithms using Bregman upper bounds and smooth-adaptability conditions.
BPGe combines BPG with extrapolation \citep{zhang2019bregman}, whereas CoCaIn-BPG controls extrapolation and stepsizes through upper and lower curvature estimates \citep{mukkamala2020convex}.
For DC objectives, BPDCA and BPDCAe use the convex components of the objective explicitly \citep{takahashi2022new}.
Approximate BPG simplifies subproblems using a local quadratic approximation of the Bregman divergence \citep{takahashi2025approximate}, and \citet{Fujiki2025-do} extend this approach with a variable metric linesearch.
\citet{Ding2026-qe} construct relatively smooth nonconvex examples in which bounded entropic mirror-descent sequences have non-KKT boundary accumulation points despite nonincreasing objective values.
For GA-BPGnc, we establish a finite-time stationarity bound under the standing assumptions.

\paragraph{Forward--backward envelopes.}
Bregman forward--backward envelopes provide merit functions and a way to assess candidate directions beyond the underlying BPG update \citep{ahookhosh2021bregman}.
Related normalized envelope gaps are used as stationarity measures in nonconvex mirror-descent analysis \citep{fatkhullin2024taming}.
We use the gap between the objective and the Bregman forward--backward envelope to define $\mathcal R_\lambda$.
An additional uniform comparison bound relates this residual to subgradient norms, as stated in Appendix~\ref{app:nonconvex-stationarity}.

\section{Common BPG properties}
\label{app:common-results}

Throughout, Assumptions~\ref{ass:standing} and~\ref{assumption:subproblem} hold.
BPG base points and solutions belong to $\Omega$.
We use the extended first-argument conventions stated in Section~\ref{sec:introduction}; no gradient is evaluated at a boundary comparison point.
All BPG pairs mentioned below, including limiting pairs and the additional stopping checks, are covered by the solvability assumption.

\subsection{The BPG map}
\label{app:common-bpg-map}

\begin{proposition}[Uniqueness and optimality]
    \label[proposition]{prop:bpg-unique}
    For every admissible $(x,\lambda)$, the minimizer $y\in T_\lambda(x)$ over $\cl\Omega$ is unique, belongs to $\Omega\cap\dom \rho$, and satisfies
    \begin{equation}
        g_\lambda(x)-\nabla f(x)\in\partial\rho(y). \label{eq:bpg-optimality}
    \end{equation}
    For $x\in\Omega\cap\dom \rho$,
    \begin{equation}
        x\in T_\lambda(x) \quad\Longleftrightarrow\quad 0\in\nabla f(x)+\partial\rho(x). \label{eq:bpg-fixed-point}
    \end{equation}
\end{proposition}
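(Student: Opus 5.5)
The plan is to treat the objective of \eqref{eq:bpg-subproblem}, $h(u)\coloneq\rho(u)+\langle\nabla f(x),u-x\rangle+\lambda^{-1}D_\psi(u,x)$, as the sum of the proper lsc convex function $\rho$ and the function $\lambda^{-1}\psi$ plus an affine term. The argument proceeds in four steps: existence, the interior property, uniqueness, and then the optimality condition together with the fixed-point characterization.

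\textbf{Existence and location in $\dom\rho$.} Existence of a minimizer is \cref{assumption:subproblem}. Since $\Omega\cap\dom\rho\neq\emptyset$ and $D_\psi(\cdot,x)$ is finite on $\Omega$, $h$ is finite somewhere. Hence any minimizer $y$ has $h(y)<\infty$, which gives $y\in\dom\rho\cap\dom\psi$.

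\textbf{The minimizer lies in $\Omega$ (main obstacle).} I would use essential smoothness of $\psi$, namely $\|\nabla\psi(w)\|\to\infty$ as $w$ approaches the boundary of $\Omega$. Suppose $y\in\dom\psi\setminus\Omega$. Pick $w\in\Omega\cap\dom\rho$ and set $y_t=y+t(w-y)$ for $t\in(0,1]$; then $y_t\in\Omega\cap\dom\rho$. By convexity, $\rho(y_t)\le(1-t)\rho(y)+t\rho(w)$, so $t^{-1}(\rho(y_t)-\rho(y))\le\rho(w)-\rho(y)$ is bounded above. The standard fact \citep[cf.\ Rockafellar, Thm.~26.1 / Lemma 2.1 in][]{bauschke2017descent} gives $\lim_{t\downarrow0}t^{-1}(\psi(y_t)-\psi(y))=-\infty$ for essentially smooth $\psi$ at a boundary point. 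The linear terms contribute a bounded amount. Therefore $h(y_t)<h(y)$ for small $t$, contradicting minimality. So $y\in\Omega\cap\dom\rho$.

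\textbf{Uniqueness.} On $\Omega$, $\psi$ is strictly convex because Legendre functions are strictly convex on $\interior\dom\psi$. Any two minimizers both lie in $\Omega$, and $h$ is strictly convex on the convex set $\Omega$. If $y_1\ne y_2$, their midpoint would give a strictly smaller value, a contradiction. Hence $T_\lambda(x)$ is a singleton.

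\textbf{Optimality condition and fixed points.} At the interior point $y$, the function $u\mapsto\langle\nabla f(x),u\rangle+\lambda^{-1}(\psi(u)-\langle\nabla\psi(x),u\rangle)$ is differentiable. I would restrict $h$ to $\R^n$ with the value $+\infty$ off $\cl\Omega$, absorbed in $\psi$, so that the constraint adds nothing. The subdifferential sum rule applies because the smooth part is finite and differentiable near $y\in\Omega\cap\dom\rho$, i.e. $\interior\dom\psi\cap\dom\rho\neq\emptyset$. It yields $0\in\partial\rho(y)+\nabla f(x)+\lambda^{-1}(\nabla\psi(y)-\nabla\psi(x))$, which is \eqref{eq:bpg-optimality}.

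For \eqref{eq:bpg-fixed-point}: if $x=y$, then $g_\lambda(x)=0$ and \eqref{eq:bpg-optimality} reads $-\nabla f(x)\in\partial\rho(x)$. Conversely, if $-\nabla f(x)\in\partial\rho(x)$, then $x\in\Omega$ satisfies the first-order condition of the convex problem, since $\nabla(\lambda^{-1}D_\psi(\cdot,x))(x)=0$. This condition is sufficient for global minimality of the convex $h$, and by uniqueness $x\in T_\lambda(x)$.
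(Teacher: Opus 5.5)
Your proof is correct; it differs from the paper's mainly in how you show that the minimizer is interior.

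\textbf{The paper's route.} It writes the subproblem as minimizing $H(u)-\langle p,u\rangle$ with $H=\psi+\lambda\rho$. It applies the Moreau--Rockafellar sum rule at an arbitrary, possibly boundary, minimizer $y$; this is justified because $\psi$ is continuous on $\Omega$ and $\Omega\cap\dom\rho\neq\emptyset$. This gives $p\in\partial\psi(y)+\lambda\partial\rho(y)$. Interiority then follows from $\partial\psi(y)\neq\emptyset$ together with $\dom\partial\psi=\interior\dom\psi$ for essentially smooth $\psi$. So interiority and the optimality inclusion come out of a single step.

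\textbf{Your route.} You argue by strict descent. At a point of $\dom\psi\setminus\Omega$, the one-sided directional derivative of $\psi$ toward any $w\in\Omega\cap\dom\rho$ is $-\infty$. (From Rockafellar's Lemma~26.2 you get $\langle\nabla\psi(y_t),w-y\rangle\to-\infty$; convexity then bounds the difference quotient $(\psi(y_t)-\psi(y))/t$ above by that derivative, so your limit claim holds.) This infinite slope dominates the bounded difference quotients of $\rho$ and of the affine terms, so such a point cannot be a minimizer. Only afterwards do you invoke the sum rule, and only at an interior point where the smooth part is differentiable nearby, which is the easy case.

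\textbf{Comparison.} Both routes rest on the same fact: the infinite inward slope of an essentially smooth function at the boundary is what proves $\dom\partial\psi=\interior\dom\psi$. Yours is an unpacked version of the paper's argument. It is more elementary and makes the barrier mechanism of $\psi$ explicit, at the cost of a few extra lines. The paper's is shorter because it uses the general sum rule at boundary points.

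Your uniqueness argument and fixed-point equivalence match the paper's. In the converse direction of the equivalence you do not need uniqueness: showing that $x$ is a minimizer already gives $x\in T_\lambda(x)$.
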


\begin{proof}
    Fix an admissible pair $(x,\lambda)$.
    Existence of a minimizer follows from Assumption~\ref{assumption:subproblem}.
    Set
    \begin{equation*}
        H=\psi+\lambda\rho, \qquad p=\nabla\psi(x)-\lambda\nabla f(x).
    \end{equation*}
    Since $\dom\psi\subset\cl\Omega$, the BPG subproblem is equivalent to minimizing $H(u)-\langle p,u\rangle$ over $\R^n$.

    Let $y$ be any minimizer.
    The function $\psi$ is continuous at every point of $\Omega$, and $\Omega\cap\dom\rho\neq\emptyset$.
    Thus the subdifferential sum rule applies, and optimality gives
    \begin{equation*}
        p\in\partial H(y) =\partial\psi(y)+\lambda\partial\rho(y).
    \end{equation*}
    In particular, $\partial\psi(y)\neq\emptyset$.
    Essential smoothness of $\psi$ gives $\dom\partial\psi=\interior\dom\psi=\Omega$, so $y\in\Omega\cap\dom\rho$.
    Since $\partial\psi(y)=\{\nabla\psi(y)\}$, the preceding inclusion becomes
    \begin{equation*}
        \frac{\nabla\psi(x)-\nabla\psi(y)}{\lambda} -\nabla f(x) \in\partial\rho(y),
    \end{equation*}
    which proves~\eqref{eq:bpg-optimality}.

    Every minimizer is therefore interior.
    Strict convexity of $\psi$ on $\Omega$ and convexity of $\rho$ make $H(u)-\langle p,u\rangle$ strictly convex on $\Omega\cap\dom\rho$, so the minimizer is unique.

    For $x\in\Omega\cap\dom\rho$, the inclusion $x\in T_\lambda(x)$ implies stationarity by~\eqref{eq:bpg-optimality}.
    Conversely, if $0\in\nabla f(x)+\partial\rho(x)$, then
    \begin{equation*}
        p\in\nabla\psi(x)+\lambda\partial\rho(x)=\partial H(x).
    \end{equation*}
    Hence $x$ minimizes the BPG subproblem, so $x\in T_\lambda(x)$.
    This proves~\eqref{eq:bpg-fixed-point}.
\end{proof}

\begin{proposition}[Continuity for a fixed stepsize]
    \label{prop:bpg-continuity}
    Fix $\lambda>0$.
    If $x_j\to x\in\Omega$ and the BPG subproblems at $x_j$ and $x$ have interior solutions $y_j\in T_\lambda(x_j)$ and $y\in T_\lambda(x)$, respectively, then $y_j\to y$.
\end{proposition}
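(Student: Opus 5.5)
We argue directly from the optimality condition~\eqref{eq:bpg-optimality} of \cref{prop:bpg-unique} and monotonicity of $\partial\rho$. The goal is to show that $D_\psi(y_j,y)+D_\psi(y,y_j)\to0$ and then that $y_j\to y$.

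\textbf{Step 1: symmetric Bregman identity.} By \cref{prop:bpg-unique},
\begin{equation*}
    \frac{\nabla\psi(x_j)-\nabla\psi(y_j)}{\lambda}-\nabla f(x_j)\in\partial\rho(y_j),
    \qquad
    \frac{\nabla\psi(x)-\nabla\psi(y)}{\lambda}-\nabla f(x)\in\partial\rho(y).
\end{equation*}
Monotonicity of $\partial\rho$ gives
\begin{equation*}
    \langle \nabla\psi(y_j)-\nabla\psi(y),y_j-y\rangle
    \le
    \langle e_j,y_j-y\rangle,
    \qquad
    e_j\coloneq(\nabla\psi(x_j)-\nabla\psi(x))-\lambda(\nabla f(x_j)-\nabla f(x)).
\end{equation*}
The left-hand side equals $D_\psi(y_j,y)+D_\psi(y,y_j)$. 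Since $\psi$ is Legendre, $\nabla\psi$ is continuous on $\Omega$, and $f$ is $C^1$ on $\Omega$. Because $x_j\to x\in\Omega$, we get $e_j\to0$.

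\textbf{Step 2: boundedness of $(y_j)$.} This is the main obstacle. The inequality only yields $D_\psi(y_j,y)\le\|e_j\|\,\|y_j-y\|$, so we must exclude $\|y_j\|\to\infty$. We plan to use that $D_\psi(\cdot,y)$ grows at least linearly at infinity. Indeed, $D_\psi(\cdot,y)$ is convex, vanishes only at $y$ (by strict convexity of $\psi$ on $\Omega$), and is lower semicontinuous. Hence on the sphere $\{\|u-y\|=r\}$, for a small $r$ with the closed ball inside $\Omega$, its infimum $m_r$ is positive, by compactness and continuity of $\psi$ on the ball.

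Convexity along rays from $y$ then gives $D_\psi(u,y)\ge (m_r/r)\|u-y\|$ whenever $\|u-y\|\ge r$. Combining this with Step 1, any $j$ with $\|y_j-y\|\ge r$ satisfies $m_r/r\le\|e_j\|$. Since $e_j\to0$, this fails for all large $j$. Therefore $\|y_j-y\|<r$ eventually, so the sequence is bounded and lies eventually in a compact ball inside $\Omega$.

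\textbf{Step 3: conclusion.} Repeat Step 2 with an arbitrary radius $r'\in(0,r]$, using the same bound $m_{r'}>0$. This yields $\|y_j-y\|<r'$ for all large $j$, hence $y_j\to y$.

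As an alternative ending, once Step 2 confines the iterates to a compact ball, Step 1 gives $D_\psi(y_j,y)\to0$. Any cluster point $\bar y$ then satisfies $D_\psi(\bar y,y)=0$ by continuity of $\psi$ on the ball, which forces $\bar y=y$ by strict convexity.

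Note that we only use the interiority of $y_j$ and $y$, which is assumed in the statement, and never need uniform relative-smoothness bounds.
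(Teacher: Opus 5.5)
Your proof is correct, but it takes a different route from the paper's.

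\textbf{The paper's route.} The paper rewrites the subproblem as minimizing $H(u)-\langle p(x),u\rangle$, where $H=\psi+\lambda\rho$ and $p(x)=\nabla\psi(x)-\lambda\nabla f(x)$. This gives $\partial H^{\ast}(p(x))=\{y\}$. It then argues that a singleton subdifferential forces $p(x)\in\interior\dom H^{\ast}$. The proof concludes with three standard facts: $\partial H^{\ast}$ is locally bounded near interior points, its graph is closed, and $y$ is unique.

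\textbf{Your route.} You subtract the two optimality inclusions of \cref{prop:bpg-unique} and use monotonicity of $\partial\rho$. This yields the symmetric estimate $D_\psi(y_j,y)+D_\psi(y,y_j)\le\langle e_j,y_j-y\rangle$ with $e_j\to0$. You finish with the linear growth of $D_\psi(\cdot,y)$ outside a small ball. That growth comes from convexity along rays from $y$ together with the positive minimum $m_r$ on the sphere, which uses continuity and strict convexity of $\psi$ on a closed ball inside $\Omega$.

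\textbf{What each buys.} Your argument is more elementary: it needs no conjugate and no normal-cone argument for the interiority of $p(x)$. It is also quantitative: $\|y_j-y\|<r$ as soon as $\|e_j\|<m_r/r$. This gives an explicit modulus of continuity, in terms of the local strict-convexity modulus of $\psi$ at $y$ and the continuity moduli of $\nabla\psi$ and $\nabla f$ at $x$. The paper's argument is shorter once standard convex-analysis facts are granted, but it is purely qualitative (compactness plus uniqueness).

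\textbf{Minor remark.} Your Step~2 already proves convergence, because the same argument applies to every sufficiently small radius $r$. Treating ``boundedness'' as a separate obstacle, followed by Step~3, is therefore unnecessary.
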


\begin{proof}
    Put $H=\psi+\lambda\rho$ and $p(x)=\nabla\psi(x)-\lambda\nabla f(x)$.
    The function $H$ is proper, lower semicontinuous, and convex, and the BPG subproblem is equivalent to minimizing $H(u)-\langle p(x),u\rangle$ over $\R^n$.
    Its unique solution implies
    \begin{equation*}
        \partial H^{\ast}(p(x))=T_\lambda(x)=\{y\}.
    \end{equation*}
    Moreover, $p(x)\in\interior\dom H^{\ast}$: otherwise a nonzero supporting normal to the convex domain at $p(x)$ could be added to any subgradient, contradicting this singleton identity.

    The subdifferential of a finite-dimensional closed convex function is locally bounded in the interior of its domain.
    Since $p(x_j)\to p(x)$ and $y_j\in\partial H^{\ast}(p(x_j))$, these solutions are bounded.
    Closedness of the subdifferential graph places every cluster point in $\partial H^{\ast}(p(x))$.
    Uniqueness then gives the claimed convergence.
\end{proof}

\subsection{Comparison and stopping tests}
\label{app:common-comparison}

\begin{proof}[Proof of Lemma~\ref{lem:bpg-comparison}]
    Let $y\in T_\lambda(x)$ and $u\in \cl\Omega\cap\dom \Phi$.
    By~\eqref{eq:bpg-optimality} and convexity of $\rho$,
    \begin{equation*}
        \rho(y)-\rho(u) \leq\langle g_\lambda(x)-\nabla f(x),y-u\rangle.
    \end{equation*}
    Also,
    \begin{equation*}
        f(y)-f(u) = D_f(y,x)-D_f(u,x)+\langle\nabla f(x),y-u\rangle.
    \end{equation*}
    Adding these expressions and using $\langle g_\lambda(x),x-y\rangle=[D_\psi(x,y)+D_\psi(y,x)]/\lambda$ gives~\eqref{eq:bpg-comparison}.

    Under~\eqref{eq:local-rs-test},
    \begin{equation*}
        P_\lambda(x) = \frac{D_\psi(x,y)+D_\psi(y,x)}{\lambda}-D_f(y,x) \geq \frac{D_\psi(x,y)}{\lambda}.
    \end{equation*}
    Strict convexity of $\psi$ on $\Omega$ gives $P_\lambda(x)=0\Rightarrow x=y$; the converse follows by substitution.
    The common point is stationary by~\eqref{eq:bpg-fixed-point}.
\end{proof}

\begin{corollary}[Relative lower bound]
    \label{cor:bpg-relative-lower}
    Let $y\in T_\lambda(x)$.
    If $D_f(u,x)\geq\mu D_\psi(u,x)$ for a comparison point $u\in \cl\Omega\cap\dom \Phi\cap\dom \psi$, then
    \begin{equation}
        \Phi(y)-\Phi(u) \leq \langle g_\lambda(x),x-u\rangle-P_\lambda(x) -\mu D_\psi(u,x). \label{eq:bpg-relative-lower}
    \end{equation}
\end{corollary}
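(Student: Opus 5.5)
This corollary follows directly from Lemma~\ref{lem:bpg-comparison}, so I will not redo the optimality argument. Let $y\in T_\lambda(x)$ with $x\in\Omega$, and let $u\in\cl\Omega\cap\dom\Phi\cap\dom\psi$ satisfy the hypothesis $D_f(u,x)\ge\mu D_\psi(u,x)$. Since $u$ lies in the comparison set $\cl\Omega\cap\dom\Phi$, inequality~\eqref{eq:bpg-comparison} applies and gives
\begin{equation*}
    \Phi(y)-\Phi(u)\le\langle g_\lambda(x),x-u\rangle-P_\lambda(x)-D_f(u,x).
\end{equation*}
The only remaining step is to replace $-D_f(u,x)$ by the larger quantity $-\mu D_\psi(u,x)$, which is exactly what the hypothesis allows. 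This yields~\eqref{eq:bpg-relative-lower}.

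Before substituting, I have to check that every term is a finite real number, so that the inequality manipulation is legitimate and not an expression of the form $\infty-\infty$. Two facts handle this.

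\begin{itemize}
    \item Because $u\in\dom\psi$ and $x\in\Omega=\interior\dom\psi$, the extended-first-argument convention makes $D_\psi(u,x)$ finite.
    \item Because $\dom\psi\subset\dom f$ (\cref{ass:standing}(ii)) and $f$ is differentiable at $x$, $D_f(u,x)$ is also finite.
\end{itemize}

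The proof of Lemma~\ref{lem:bpg-comparison} already handles the finiteness of $\Phi(y)$ and $\Phi(u)$, and of the inner product. With all terms finite, subtracting the hypothesis inequality is valid.

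The only point needing any care is this finiteness bookkeeping for a boundary comparison point $u\in\cl\Omega\setminus\Omega$. The membership $u\in\dom\psi$ is what resolves it, which is precisely why $\dom\psi$ appears in the comparison set of the statement. No gradient at $u$ is ever needed, consistent with the conventions of Appendix~\ref{app:common-results}.
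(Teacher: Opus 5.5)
Your proposal is correct and matches the paper's proof, which simply substitutes the hypothesis $-D_f(u,x)\le-\mu D_\psi(u,x)$ into~\eqref{eq:bpg-comparison}. The finiteness check you add is harmless but not needed by the paper, and $D_f(u,x)$ is in any case already finite because $u\in\dom\Phi$.
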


\begin{proof}
    Substitute the lower bound in~\eqref{eq:bpg-comparison}.
\end{proof}

\subsection{The residual and stationarity}
\label{app:common-gap}

\begin{proposition}[Residual, descent, and stationarity]
    \label{prop:bpg-model-gap}
    For $x\in\Omega\cap\dom \rho$ and $y\in T_\lambda(x)$,
    \begin{equation}
        \mathcal R_\lambda(x)\geq\frac{2}{\lambda^2}D_\psi(x,y)\geq0,
        \qquad
        \mathcal R_\lambda(x)=0
        \quad\Longleftrightarrow\quad
        y=x.
        \label{eq:gap-lower}
    \end{equation}
    If~\eqref{eq:local-rs-test} holds, then
    \begin{equation}
        \Phi(y)\leq\Phi(x)-\frac{\lambda}{2}\mathcal R_\lambda(x). \label{eq:gap-descent}
    \end{equation}
    This residual vanishes exactly when $0\in\nabla f(x)+\partial\rho(x)$.
\end{proposition}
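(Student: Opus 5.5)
The plan is to use the optimality condition~\eqref{eq:bpg-optimality} at $u=x$, together with the three-point identity already used in the proof of Lemma~\ref{lem:bpg-comparison}. Write $s\coloneq g_\lambda(x)-\nabla f(x)\in\partial\rho(y)$. Convexity of $\rho$ at $y$, evaluated at the point $x\in\dom\rho$, gives $\rho(x)\geq\rho(y)+\langle s,x-y\rangle$, that is,
\begin{equation*}
    \rho(y)-\rho(x)\leq\langle g_\lambda(x)-\nabla f(x),y-x\rangle .
\end{equation*}

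We substitute this into the definition of $\mathcal R_\lambda(x)$. The term $\langle\nabla f(x),y-x\rangle$ cancels, and we are left with
\begin{equation*}
    \langle\nabla f(x),y-x\rangle+\rho(y)-\rho(x)+\lambda^{-1}D_\psi(y,x)\leq\langle g_\lambda(x),y-x\rangle+\lambda^{-1}D_\psi(y,x).
\end{equation*}
The identity $\langle g_\lambda(x),x-y\rangle=[D_\psi(x,y)+D_\psi(y,x)]/\lambda$ then turns the right-hand side into $-D_\psi(x,y)/\lambda$. Multiplying by $-2/\lambda$ yields $\mathcal R_\lambda(x)\geq 2D_\psi(x,y)/\lambda^2\geq0$.

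For the equivalence, first suppose $y=x$. Every term in the bracket then vanishes, so $\mathcal R_\lambda(x)=0$. Conversely, suppose $\mathcal R_\lambda(x)=0$. Then $D_\psi(x,y)=0$, and since $x,y\in\Omega$ (Proposition~\ref{prop:bpg-unique}), strict convexity of $\psi$ on $\Omega$ forces $x=y$.

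For the descent inequality~\eqref{eq:gap-descent}, write
\begin{equation*}
    f(y)=f(x)+\langle\nabla f(x),y-x\rangle+D_f(y,x).
\end{equation*}
Under~\eqref{eq:local-rs-test} we have $D_f(y,x)\leq\lambda^{-1}D_\psi(y,x)$, hence
\begin{equation*}
    \Phi(y)\leq\Phi(x)+\langle\nabla f(x),y-x\rangle+\rho(y)-\rho(x)+\lambda^{-1}D_\psi(y,x).
\end{equation*}
By the definition of $\mathcal R_\lambda$, the last three terms together with the inner product equal $-\frac{\lambda}{2}\mathcal R_\lambda(x)$, which gives~\eqref{eq:gap-descent}.

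The stationarity claim follows by combining two equivalences. The residual vanishes if and only if $y=x$, and $x=y\in T_\lambda(x)$ holds if and only if $0\in\nabla f(x)+\partial\rho(x)$ by~\eqref{eq:bpg-fixed-point}, which applies because $x\in\Omega\cap\dom\rho$.

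The only delicate point is making sure every quantity is finite. This holds because $x\in\dom\rho$ by hypothesis and $y\in\Omega\cap\dom\rho$ by Proposition~\ref{prop:bpg-unique}, so the subgradient inequality at $x$ is legitimate and $\mathcal R_\lambda(x)$ is a finite real number. Everything else is a direct algebraic rearrangement.
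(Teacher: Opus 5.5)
Your proof is correct and follows the paper's route. The lower bound comes from the subgradient inequality for $\rho$ at $y$ combined with $\langle g_\lambda(x),x-y\rangle=[D_\psi(x,y)+D_\psi(y,x)]/\lambda$; the equivalences come from strict convexity together with the fixed-point characterization; and the descent inequality comes from the decomposition $\Phi(y)-\Phi(x)=-\frac{\lambda}{2}\mathcal R_\lambda(x)-\lambda^{-1}D_\psi(y,x)+D_f(y,x)$, with your version just writing out the steps the paper compresses.
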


\begin{proof}
    Optimality and convexity of $\rho$ give
    \begin{equation*}
        \rho(x)-\rho(y) \geq \langle g_\lambda(x)-\nabla f(x),x-y\rangle.
    \end{equation*}
    Consequently,
    \begin{equation*}
        \mathcal R_\lambda(x)
        \geq
        \frac{2}{\lambda}\left(
        \langle g_\lambda(x),x-y\rangle
        -\lambda^{-1}D_\psi(y,x)\right)
        =
        \frac{2}{\lambda^2}D_\psi(x,y).
    \end{equation*}
    Strict convexity and~\eqref{eq:bpg-fixed-point} give the equivalences.
    The identity
    \begin{equation*}
        \Phi(y)-\Phi(x) = -\frac{\lambda}{2}\mathcal R_\lambda(x)-\lambda^{-1}D_\psi(y,x)+D_f(y,x)
    \end{equation*}
    proves~\eqref{eq:gap-descent} under the upper-bound test.
\end{proof}

The Bregman forward--backward envelope associated with this subproblem is
\begin{equation*}
    \mathcal F_\lambda(x) = f(x)+\min_{u\in \cl\Omega} \left\{ \rho(u)+\langle\nabla f(x),u-x\rangle +\lambda^{-1}D_\psi(u,x) \right\}.
\end{equation*}
For $x\in\Omega\cap\dom \rho$, $\mathcal R_\lambda(x)=\frac{2}{\lambda}(\Phi(x)-\mathcal F_\lambda(x))$.
These identities require no differentiability assertion about $\mathcal F_\lambda$.

\begin{lemma}[A subgradient at the BPG point]
    \label{lem:bpg-subgradient}
    For $y\in T_\lambda(x)$,
    \begin{equation*}
        s_\lambda(x) \coloneq g_\lambda(x)+\nabla f(y)-\nabla f(x) \in\nabla f(y)+\partial\rho(y).
    \end{equation*}
\end{lemma}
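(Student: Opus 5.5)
The claim follows directly from the optimality condition~\eqref{eq:bpg-optimality} in \cref{prop:bpg-unique}. By that proposition, the BPG point $y\in T_\lambda(x)$ is unique, lies in $\Omega\cap\dom\rho$, and satisfies
\begin{equation*}
    g_\lambda(x)-\nabla f(x)\in\partial\rho(y).
\end{equation*}
Since $y\in\Omega$ and $f$ is continuously differentiable on $\Omega$ by \cref{ass:standing}(ii), the gradient $\nabla f(y)$ is well defined. Adding $\nabla f(y)$ to both sides of the inclusion gives
\begin{equation*}
    s_\lambda(x)=g_\lambda(x)+\nabla f(y)-\nabla f(x)\in\nabla f(y)+\partial\rho(y),
\end{equation*}
which is exactly the claimed statement.

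The only points to check are that the objects are well defined. First, $x\in\Omega$ makes $\nabla\psi(x)$ and $\nabla f(x)$ finite. Second, $y\in\Omega$, again by \cref{prop:bpg-unique}, makes $\nabla\psi(y)$, and hence $g_\lambda(x)$, finite, and also makes $\nabla f(y)$ finite. Third, since $y\in\Omega\cap\dom\rho$, the set $\partial\rho(y)$ is the subdifferential at a point where $\rho$ is finite, so the sum $\nabla f(y)+\partial\rho(y)$ is just a translate of it, matching the definition of $\partial\Phi(y)$ given in the notation paragraph.

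I expect no real obstacle. The mildly delicate step is recognizing that the right-hand side is $\partial\Phi(y)$ in the paper's convention, which is only defined for points of $\Omega$; the interiority part of \cref{prop:bpg-unique} is what provides this. The result therefore also reads $s_\lambda(x)\in\partial\Phi(y)$.
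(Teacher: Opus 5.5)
Your proposal is correct and follows the paper's own proof: add $\nabla f(y)$ to the optimality inclusion~\eqref{eq:bpg-optimality} from \cref{prop:bpg-unique}. Your well-definedness remarks, which rest on the interiority $y\in\Omega\cap\dom\rho$, are accurate additions that the paper leaves implicit.
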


\begin{proof}
    Add $\nabla f(y)$ to~\eqref{eq:bpg-optimality}.
\end{proof}

\subsection{Backtracking}
\label{app:common-backtracking}

\begin{proposition}[Uniform stepsize threshold]
    \label{prop:local-rs-backtracking}
    The upper-bound test holds at every admissible base point whenever $\lambda\leq1/L$.
    Thus geometric reductions by $\gamma_->1$ terminate when the base point is fixed.
    They also permit only finitely many stepsize reductions along any search in which the base point changes but the stepsize is never increased.
\end{proposition}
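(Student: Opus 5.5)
The first claim follows directly from relative smoothness, \cref{ass:standing}(iv). Fix an admissible base point $x\in\Omega$ and a stepsize $\lambda\le 1/L$. By \cref{prop:bpg-unique}, the trial point $\widehat y\in T_\lambda(x)$ is unique and lies in $\Omega$. Both $x$ and $\widehat y$ are therefore in $\Omega$, so relative smoothness gives
\begin{equation*}
    D_f(\widehat y,x)\le L\,D_\psi(\widehat y,x)\le \lambda^{-1}D_\psi(\widehat y,x),
\end{equation*}
using $D_\psi\ge0$ (convexity of $\psi$) and $L\le 1/\lambda$. This is exactly the test~\eqref{eq:local-rs-test}. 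The threshold $1/L$ does not depend on $x$, so it is uniform over admissible base points.

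For a fixed base point, the backtracking loop starts from some $\lambda^{(0)}>0$ and tries $\lambda^{(0)}\gamma_-^{-j}$ for $j=0,1,2,\dots$. Since $\gamma_->1$, these values decrease to zero. Once $\lambda^{(0)}\gamma_-^{-j}\le 1/L$, which happens for all $j\ge \lceil\log_{\gamma_-}(L\lambda^{(0)})\rceil_+$, the test is guaranteed to hold. The loop therefore stops after at most $\max\{0,\lceil\log_{\gamma_-}(L\lambda^{(0)})\rceil\}$ reductions, which is at most $\max\{0,\lceil\log_{\gamma_-}(L\lambda^{(0)})\rceil\}+1$ BPG solves. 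It may of course stop earlier.

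For the last claim, consider a search that starts with stepsize $\lambda^{(0)}$. The base point may change between trials, for instance when $\kappa$ is enlarged in \cref{alg:convex-full}, but the stepsize is only ever kept or divided by $\gamma_-$. A reduction is performed only when the test fails at the current trial. By the first part, the test cannot fail once the current stepsize is at most $1/L$, whatever the base point. Every reduction therefore starts from a stepsize strictly greater than $1/L$. After $m$ reductions the stepsize is $\lambda^{(0)}\gamma_-^{-m}$, and the reduction just performed required $\lambda^{(0)}\gamma_-^{-(m-1)}>1/L$. This gives the uniform bound
\begin{equation*}
    m\le 1+\log_{\gamma_-}(L\lambda^{(0)}),
\end{equation*}
independent of how the base points vary.

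The only subtle point is admissibility. Relative smoothness is assumed only on $\Omega\times\Omega$, so the argument needs both $x$ and $\widehat y$ in $\Omega$. For the base point, this should follow from the algorithm's domain checks (for example, $\widehat z\in\Omega$ in \cref{alg:convex-full}) together with convexity of $\Omega$ in the coupling step. For $\widehat y$, it comes from \cref{prop:bpg-unique}.
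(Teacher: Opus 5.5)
Your proof is correct and follows the paper's argument. Relative smoothness on $\Omega\times\Omega$, together with $D_\psi\ge0$, yields the upper-bound test for every $\lambda\le1/L$ with a threshold independent of the base point, so each reduction must start from a stepsize above $1/L$ and only finitely many reductions can occur. Your explicit count is a harmless refinement, but your bound should read $m\le\max\{0,1+\log_{\gamma_-}(L\lambda^{(0)})\}$, since without the $\max$ it fails when no reduction occurs and $L\lambda^{(0)}<1/\gamma_-$.
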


\begin{proof}
    For $y\in T_\lambda(x)$, relative smoothness gives
    \begin{equation*}
        D_f(y,x) \leq L D_\psi(y,x) \leq \lambda^{-1}D_\psi(y,x)
    \end{equation*}
    for $\lambda\leq1/L$.
    The threshold is independent of $x$.
    Geometric reduction reaches it in finitely many steps, and no subsequent reduction can then be required.
\end{proof}

\begin{proposition}[Accepted stepsizes and backtracking count]
    \label{prop:accepted-lambda-lower}
    In GA-BPGc and GA-BPGsc, initialize the first search at $\lambda_0$ and each subsequent search at $\gamma_+\lambda_{k-1}$.
    In GA-BPGnc, initialize at $\lambda_0$ for $k=1$ and at
    \begin{equation*}
        \widetilde\lambda_k =\min\{\gamma_+\lambda_{k-1},\lambda_{\max}\}
    \end{equation*}
    for $k\geq2$, with $\gamma_+\geq1$ and $\lambda_{\max}\geq\lambda_0$.
    Then every accepted stepsize is at least $\underline\lambda=\min\{\lambda_0,1/(\gamma_-L)\}$.
    For GA-BPGnc, if $m_k$ counts failed upper-bound tests,
    \begin{equation}
        \sum_{k=1}^N m_k \leq \frac{ (N-1)\log\gamma_+ +\log(\lambda_0/\underline\lambda) }{\log\gamma_-}. \label{eq:total-backtracking}
    \end{equation}
\end{proposition}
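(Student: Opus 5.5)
The plan is to use \cref{prop:local-rs-backtracking} for the lower bound and then count reductions through a logarithmic potential for the backtracking count.

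\textbf{Lower bound on accepted stepsizes.} I will argue by induction on $k$ that every trial stepsize at which a search starts is at least $\underline\lambda$, and that every accepted stepsize is at least $\underline\lambda$. For $k=1$ the search starts at $\lambda_0\geq\underline\lambda$. Within one search, the stepsize is divided by $\gamma_-$ only after a failed upper-bound test~\eqref{eq:local-rs-test}. By \cref{prop:local-rs-backtracking}, a failure at trial value $\lambda$ forces $\lambda>1/L$, so the reduced value satisfies $\lambda/\gamma_->1/(\gamma_-L)\geq\underline\lambda$. A value that is already at most $1/L$ is never reduced. Hence every trial value, and in particular the accepted one, stays at least $\min\{\text{initial trial},\,1/(\gamma_-L)\}$.

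This argument applies even when the base point $x$ changes during the search. In GA-BPGc and GA-BPGsc the $\kappa$-retries change $x$ but keep $\lambda$ fixed, and the threshold $1/L$ does not depend on $x$. Each later search starts at $\gamma_+\lambda_{k-1}\geq\lambda_{k-1}\geq\underline\lambda$ in GA-BPGc and GA-BPGsc. In GA-BPGnc it starts at $\min\{\gamma_+\lambda_{k-1},\lambda_{\max}\}$, which is at least $\underline\lambda$ because $\lambda_{\max}\geq\lambda_0\geq\underline\lambda$ and $\gamma_+\lambda_{k-1}\geq\underline\lambda$. This completes the induction.

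\textbf{Backtracking count for GA-BPGnc.} Here I assume that the stepsize changes only through failed upper-bound tests, so that $\lambda_k=\widetilde\lambda_k\gamma_-^{-m_k}$ with $\widetilde\lambda_1=\lambda_0$. Taking logarithms gives
\[
m_k\log\gamma_-=\log\widetilde\lambda_k-\log\lambda_k .
\]
For $k\geq2$ we have $\log\widetilde\lambda_k\leq\log\lambda_{k-1}+\log\gamma_+$. Summing over $k=1,\dots,N$ telescopes to
\[
\Big(\sum_{k=1}^N m_k\Big)\log\gamma_-\leq\log\lambda_0-\log\lambda_N+(N-1)\log\gamma_+ .
\]
Using $\lambda_N\geq\underline\lambda$ from the first part and dividing by $\log\gamma_->0$ gives~\eqref{eq:total-backtracking}.

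\textbf{Main obstacle.} The delicate step is making sure that no mechanism other than a failed test lowers $\lambda$. This matters in two places: the GA-BPGnc reset and extrapolation logic, and the $\kappa$-retries in GA-BPGc and GA-BPGsc. If a reset restarted the BPG trial from $y_k$, I would check that it reuses the current $\lambda$. In that case the per-search monotonicity argument above still applies, because the threshold $1/L$ is uniform over base points. I would also use the capping by $\lambda_{\max}$ only in the direction $\widetilde\lambda_k\leq\gamma_+\lambda_{k-1}$, which is all the telescoping step requires.
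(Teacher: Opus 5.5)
Your proposal is correct and follows the paper's proof essentially step for step. The lower bound is by induction, using that a failed upper-bound test forces $\lambda>1/L$ uniformly in the base point, so the $\kappa$-driven recomputation of $x$ in GA-BPGc and GA-BPGsc is harmless. The count comes from taking logarithms of $\lambda_1=\lambda_0\gamma_-^{-m_1}$ and $\lambda_k\leq\gamma_+\lambda_{k-1}\gamma_-^{-m_k}$ and telescoping against $\lambda_N\geq\underline\lambda$. Your direct check that $\min\{\gamma_+\lambda_{k-1},\lambda_{\max}\}\geq\underline\lambda$ differs only trivially from the paper's remark that each initial trial is no smaller than the preceding accepted value.
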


\begin{proof}
    An initial trial stepsize is no smaller than the preceding accepted value.
    Whenever a reduction occurs, its preceding value must exceed $1/L$; hence the reduced value exceeds $1/(\gamma_-L)$.
    Induction proves the lower bound, including when the base point in GA-BPGc and GA-BPGsc is recomputed after a reduction.

    In GA-BPGnc, $\lambda_1=\lambda_0\gamma_-^{-m_1}$ and $\lambda_k\leq\gamma_+\lambda_{k-1}\gamma_-^{-m_k}$ for $k\geq2$.
    Multiplication and the lower bound on $\lambda_N$ yield~\eqref{eq:total-backtracking}.
\end{proof}

The count in~\eqref{eq:total-backtracking} covers failed upper-bound tests.
In GA-BPGc and GA-BPGsc, trials rejected by a $\kappa$-dependent test can require additional BPG solves; a bound on their number must be established separately.

\subsection{Coefficient and mirror identities}
\label{app:common-identities}

The following results are used for GA-BPGc and GA-BPGsc, with $\phi=\psi$ in the latter.

\begin{lemma}[Coefficients and coupling]
    \label{lem:coefficients}
    For $\omega_-\geq0$ and $\lambda,\kappa>0$, the convex coefficient equation has the unique positive solution
    \begin{equation*}
        \alpha= \frac{\lambda+\sqrt{\lambda^2+2\kappa\lambda \omega_-}}{\kappa}.
    \end{equation*}
    For $\theta_-=1+\mu \omega_-$ and $\mu>0$, the strong-regime equation has a positive solution if and only if $d\coloneq\kappa-2\mu\lambda>0$.
    That solution is unique and equals
    \begin{equation*}
        \alpha= \frac{ \lambda(\theta_-+\mu \omega_-) +\sqrt{ \lambda^2(\theta_-+\mu \omega_-)^2+2\lambda d\omega_-\theta_- } }{d}.
    \end{equation*}
    For fixed $\lambda,\omega_-,\theta_-$, both roots tend to zero as $\kappa\to\infty$.
    If $\omega=\omega_-+\alpha$ and $\omega x=\omega_-y_-+\alpha z_-$, then
    \begin{equation*}
        \omega_-(x-y_-)+\alpha(x-u)=\alpha(z_--u).
    \end{equation*}
\end{lemma}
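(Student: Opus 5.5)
The plan is direct algebra: write each coefficient equation as a quadratic in $\alpha$, read off the number of positive roots from the signs of its coefficients, bound the roots explicitly to get the limits as $\kappa\to\infty$, and expand the coupling definitions to get the identity.

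\textbf{Convex equation.} The equation is $\kappa\alpha^2=2\lambda\omega$ with $\omega=\omega_-+\alpha$, which rearranges to
\begin{equation*}
    \kappa\alpha^2-2\lambda\alpha-2\lambda\omega_-=0 .
\end{equation*}
The roots are $(\lambda\pm\sqrt{\lambda^2+2\kappa\lambda\omega_-})/\kappa$. Their product is $-2\lambda\omega_-/\kappa\le0$ and their sum is $2\lambda/\kappa>0$. So exactly one root is positive, namely the one with the plus sign. The case $\omega_-=0$ is included: the roots are then $0$ and $2\lambda/\kappa$.

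\textbf{Strong-regime equation.} Here $\kappa\alpha^2=2\lambda\omega\theta$ with $\omega=\omega_-+\alpha$ and $\theta=\theta_-+\mu\alpha$. Expanding $(\omega_-+\alpha)(\theta_-+\mu\alpha)$ gives
\begin{equation*}
    q(\alpha)\coloneq d\alpha^2-2\lambda b\alpha-2\lambda\omega_-\theta_-=0,
    \qquad b\coloneq\theta_-+\mu\omega_-,\quad d=\kappa-2\mu\lambda .
\end{equation*}
Since $\theta_-=1+\mu\omega_-\ge1$, we have $b>0$ and a constant term $-2\lambda\omega_-\theta_-\le0$. I split on the sign of $d$.
\begin{itemize}
    \item If $d>0$, the product of the roots is nonpositive and their sum is positive. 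Exactly one root is positive, and the quadratic formula gives the stated expression.
    \item If $d=0$, the equation is linear and its only root is $\alpha=-\omega_-\theta_-/b\le0$.
    \item If $d<0$, every term of $q(\alpha)$ is nonpositive for $\alpha>0$ and $-2\lambda b\alpha<0$, so $q(\alpha)<0$ and there is no positive root.
\end{itemize}
This gives the "if and only if".

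\textbf{Limits as $\kappa\to\infty$.} I use $\sqrt{a+c}\le\sqrt a+\sqrt c$ to bound each root:
\begin{equation*}
    \alpha\le\frac{2\lambda}{\kappa}+\sqrt{\frac{2\lambda\omega_-}{\kappa}}
    \quad\text{(convex)},\qquad
    \alpha\le\frac{2\lambda b}{d}+\sqrt{\frac{2\lambda\omega_-\theta_-}{d}}
    \quad\text{(strong)}.
\end{equation*}
Both bounds tend to $0$ as $\kappa\to\infty$, because $d=\kappa-2\mu\lambda\to\infty$ while $\lambda,\omega_-,\theta_-$ (hence $b$) are fixed.

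\textbf{Coupling identity.} Expanding,
\begin{equation*}
    \omega_-(x-y_-)+\alpha(x-u)=(\omega_-+\alpha)x-\omega_-y_--\alpha u=\omega x-\omega_-y_--\alpha u .
\end{equation*}
Substituting $\omega x=\omega_-y_-+\alpha z_-$ leaves $\alpha(z_--u)$.

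No step is a real obstacle. The only care needed is the sign bookkeeping in the $d\le0$ cases, which relies on $\theta_->0$.
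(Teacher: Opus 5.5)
Your proposal is correct and matches the paper's proof: both rewrite the two coefficient equations as quadratics in $\alpha$ and apply the quadratic formula. Both observe that the strong-regime polynomial is strictly negative on $\alpha>0$ when $d\le 0$, read the $\kappa\to\infty$ limits off the root formulas, and obtain the coupling identity by expanding the left side; your sum/product-of-roots argument and explicit upper bounds on the roots just make the paper's terse steps more detailed.
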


\begin{proof}
    The respective quadratic equations are
    \begin{equation*}
        \kappa \alpha^2-2\lambda \alpha-2\lambda \omega_-=0
    \end{equation*}
    and
    \begin{equation*}
        d \alpha^2-2\lambda(\theta_-+\mu \omega_-)\alpha-2\lambda \omega_-\theta_-=0.
    \end{equation*}
    The quadratic formula proves the expressions and uniqueness of the positive roots, including $\omega_-=0$.
    When $d\leq0$, the second polynomial is strictly negative for every $\alpha>0$.
    The formulas give the limits as $\kappa\to\infty$.
    The coupling identity follows by expanding its left side.
\end{proof}

\begin{lemma}[Mirror identity]
    \label{lem:mirror-identity}
    Suppose $z_-,z\in\interior\dom \phi$ and $\nabla\phi(z)=\nabla\phi(z_-)-\alpha g$.
    For $u\in\dom \phi$,
    \begin{equation*}
        \alpha\langle g,z_--u\rangle +D_\phi(u,z)-D_\phi(u,z_-) = D_\phi(z_-,z).
    \end{equation*}
\end{lemma}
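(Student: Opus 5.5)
This is the Bregman three-point identity, used with the mirror relation to remove the linear term. The plan is to expand all three Bregman divergences from the definition $D_\phi(a,b)=\phi(a)-\phi(b)-\langle\nabla\phi(b),a-b\rangle$ and then substitute $\alpha g=\nabla\phi(z_-)-\nabla\phi(z)$. All gradients are evaluated only at $z$ and $z_-$, which lie in $\interior\dom\phi$, so the divergences are well defined. The value $D_\phi(u,\cdot)$ is finite for $u\in\dom\phi$, so no $\infty-\infty$ issue arises.

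First, I compute the difference
\begin{equation*}
D_\phi(u,z)-D_\phi(u,z_-)=\phi(z_-)-\phi(z)-\langle\nabla\phi(z),u-z\rangle+\langle\nabla\phi(z_-),u-z_-\rangle.
\end{equation*}
The $\phi(u)$ terms cancel.

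Second, I replace $\alpha\langle g,z_--u\rangle$ by $\langle\nabla\phi(z_-)-\nabla\phi(z),z_--u\rangle$. Adding this to the previous expression, the $\nabla\phi(z_-)$ terms combine to
\begin{equation*}
\langle\nabla\phi(z_-),u-z_-\rangle+\langle\nabla\phi(z_-),z_--u\rangle=0.
\end{equation*}
The $\nabla\phi(z)$ terms combine to
\begin{equation*}
-\langle\nabla\phi(z),u-z\rangle-\langle\nabla\phi(z),z_--u\rangle=-\langle\nabla\phi(z),z_--z\rangle.
\end{equation*}
What remains is $\phi(z_-)-\phi(z)-\langle\nabla\phi(z),z_--z\rangle$, which is exactly $D_\phi(z_-,z)$.

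There is no real obstacle here. The only point needing care is the sign bookkeeping in the inner products, which is where I would check the algebra twice.
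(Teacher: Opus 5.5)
Your proof is correct and is essentially the paper's argument: both expand the Bregman divergences from the definition and substitute $\alpha g=\nabla\phi(z_-)-\nabla\phi(z)$. The only difference is packaging. The paper records the two intermediate identities $D_\phi(u,z)-D_\phi(u,z_-)=\alpha\langle g,u-z\rangle-D_\phi(z,z_-)$ and $\alpha\langle g,z_--z\rangle=D_\phi(z_-,z)+D_\phi(z,z_-)$ and adds them, whereas you carry out the same cancellation in a single pass.
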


\begin{proof}
    Expanding the divergences gives
    \begin{equation*}
        D_\phi(u,z)-D_\phi(u,z_-) = \alpha\langle g,u-z\rangle-D_\phi(z,z_-).
    \end{equation*}
    Also,
    \begin{equation*}
        \alpha\langle g,z_--z\rangle = D_\phi(z_-,z)+D_\phi(z,z_-).
    \end{equation*}
    Add the identities.
\end{proof}

\begin{lemma}[Weighted mirror identity]
    \label{lem:weighted-mirror-identity}
    Suppose $\theta=\theta_-+\mu \alpha$ and $z_-,x,z\in\Omega$ satisfy
    \begin{equation*}
        \theta\nabla\psi(z) = \theta_-\nabla\psi(z_-)+\mu \alpha\nabla\psi(x)-\alpha g.
    \end{equation*}
    For $u\in\dom \psi$,
    \begin{align}
        \theta D_\psi(u,z)-\theta_-D_\psi(u,z_-)
        &=
        \alpha\langle g,u-z\rangle+\mu \alpha D_\psi(u,x)
        \notag\\
        &\quad
        -\mu \alpha D_\psi(z,x)-\theta_-D_\psi(z,z_-),
        \label{eq:weighted-mirror-identity}
        \\
        \alpha\langle g,z_--z\rangle
        -\mu \alpha D_\psi(z,x)-\theta_-D_\psi(z,z_-)
        &=
        \theta D_\psi(z_-,z)-\mu \alpha D_\psi(z_-,x).
        \label{eq:weighted-mirror-remainder}
    \end{align}
\end{lemma}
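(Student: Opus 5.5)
Both identities follow from the three-point identity for Bregman divergences together with the defining relation for $z$. No convexity beyond the definition of $D_\psi$ is needed. The plan is to expand everything in terms of $\psi$ values and gradients at $z_-,x,z$ and compare.

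\emph{First identity.} Start from the three-point identity: for $u\in\dom\psi$ and $a,b\in\Omega$,
\begin{equation*}
D_\psi(u,a)-D_\psi(u,b)=-D_\psi(a,b)+\langle\nabla\psi(b)-\nabla\psi(a),u-a\rangle .
\end{equation*}
Apply it with $(a,b)=(z,z_-)$, weighted by $\theta_-$. Apply it again with $(a,b)=(z,x)$, weighted by $\mu\alpha$. This gives
\begin{equation*}
\theta_-\bigl(D_\psi(u,z)-D_\psi(u,z_-)\bigr)+\mu\alpha\bigl(D_\psi(u,z)-D_\psi(u,x)\bigr)
=-\theta_-D_\psi(z,z_-)-\mu\alpha D_\psi(z,x)+\bigl\langle \theta_-\nabla\psi(z_-)+\mu\alpha\nabla\psi(x)-\theta\nabla\psi(z),\,u-z\bigr\rangle .
\end{equation*}
Here $\theta=\theta_-+\mu\alpha$ was used to combine the $\nabla\psi(z)$ terms. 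By the hypothesis on $z$, the vector in the inner product equals $\alpha g$. The left side equals $\theta D_\psi(u,z)-\theta_-D_\psi(u,z_-)-\mu\alpha D_\psi(u,x)$. Rearranging gives \eqref{eq:weighted-mirror-identity}.

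\emph{Second identity.} Specialize \eqref{eq:weighted-mirror-identity} to $u=z_-$, which is admissible since $z_-\in\Omega\subset\dom\psi$. Since $D_\psi(z_-,z_-)=0$, this yields
\begin{equation*}
\theta D_\psi(z_-,z)=\alpha\langle g,z_--z\rangle+\mu\alpha D_\psi(z_-,x)-\mu\alpha D_\psi(z,x)-\theta_-D_\psi(z,z_-).
\end{equation*}
Moving $\mu\alpha D_\psi(z_-,x)$ to the left gives exactly \eqref{eq:weighted-mirror-remainder}.

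The only delicate point is bookkeeping. One must apply the three-point identity with the second arguments $z_-$ and $x$, which are interior points where gradients exist. The first argument $u$ may lie on the boundary of $\dom\psi$, where $D_\psi(u,\cdot)$ is finite because $u\in\dom\psi$. Consequently no gradient is ever evaluated at $u$, consistent with the conventions in Section~\ref{sec:introduction}. I expect no genuine obstacle beyond tracking signs when combining the two weighted copies of the identity.
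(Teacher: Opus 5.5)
Your proof is correct and is essentially the paper's argument. The paper verifies the first identity by expanding both sides and matching the coefficients of $\psi(u)$ and $u$, which your two weighted three-point identities do in a more organized form (the signs and the use of $\theta=\theta_-+\mu\alpha$ check out), and it likewise obtains the second identity by setting $u=z_-$.
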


\begin{proof}
    Expand $\theta D_\psi(u,z)-\theta_-D_\psi(u,z_-)$.
    The coefficient of $\psi(u)$ is $\mu \alpha$, and the coefficient of $u$ is $\alpha g-\mu \alpha\nabla\psi(x)$.
    These agree with the first two terms on the right of \eqref{eq:weighted-mirror-identity}; expanding the remaining constant terms gives the other two terms.
    Set $u=z_-$ in that identity to obtain \eqref{eq:weighted-mirror-remainder}.
\end{proof}

\begin{lemma}[Small mirror displacement]
    \label{lem:small-mirror}
    Let $\phi$ be Legendre, $z_-\in\interior\dom\phi$, and $p=\nabla\phi(z_-)$.
    If $p_\alpha=p+\alpha v_\alpha$, where $\alpha\downarrow0$ and $\{v_\alpha\}$ is bounded, then eventually $p_\alpha\in\interior\dom\phi^{\ast}$, $z_\alpha=\nabla\phi^{\ast}(p_\alpha)\to z_-$, and $D_\phi(z_-,z_\alpha)=o(\alpha)$.
\end{lemma}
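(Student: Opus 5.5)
The plan is to use the standard facts for a Legendre function $\phi$: $\nabla\phi$ is a bijection from $\interior\dom\phi$ onto $\interior\dom\phi^{\ast}$, with inverse $\nabla\phi^{\ast}$, and both maps are continuous on their open domains. The statement then splits into three parts, handled in order.

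\emph{Membership in $\interior\dom\phi^{\ast}$.} Since $z_-\in\interior\dom\phi$, we have $p=\nabla\phi(z_-)\in\interior\dom\phi^{\ast}$, so some Euclidean ball $B(p,r)$ lies in $\interior\dom\phi^{\ast}$. Let $M\coloneq\sup_\alpha\|v_\alpha\|<\infty$. Then $\|p_\alpha-p\|\leq\alpha M<r$ as soon as $\alpha<r/M$, which gives the first claim for all sufficiently small $\alpha$.

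\emph{Convergence $z_\alpha\to z_-$.} The map $\phi^{\ast}$ is Legendre and differentiable on the open convex set $\interior\dom\phi^{\ast}$, and a differentiable convex function has a continuous gradient on such a set. Since $p_\alpha\to p$, continuity gives $z_\alpha=\nabla\phi^{\ast}(p_\alpha)\to\nabla\phi^{\ast}(p)=z_-$.

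\emph{The estimate $D_\phi(z_-,z_\alpha)=o(\alpha)$.} I will use the three-point expansion $D_\phi(z_-,z_\alpha)+D_\phi(z_\alpha,z_-)=\langle\nabla\phi(z_-)-\nabla\phi(z_\alpha),z_--z_\alpha\rangle$. The right side equals $\langle -\alpha v_\alpha,z_--z_\alpha\rangle$, whose absolute value is at most $\alpha M\|z_--z_\alpha\|$. Since $D_\phi(z_\alpha,z_-)\geq0$, this yields
\[
0\leq D_\phi(z_-,z_\alpha)\leq\alpha M\|z_\alpha-z_-\|.
\]
By the previous step $\|z_\alpha-z_-\|\to0$, so the right side is $o(\alpha)$. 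Equivalently, one can write $D_\phi(z_-,z_\alpha)=D_{\phi^{\ast}}(p_\alpha,p)$ and use differentiability of $\phi^{\ast}$ at $p$; either route works.

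The only delicate point is the continuity of $\nabla\phi^{\ast}$ on $\interior\dom\phi^{\ast}$, which is standard (Rockafellar, Thm.~25.5 and Thm.~26.5). The rest is elementary.
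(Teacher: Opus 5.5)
Your proposal is correct. The first two parts match the paper's argument: $\interior\dom\phi^{\ast}$ is open around $p=\nabla\phi(z_-)$, and $\nabla\phi^{\ast}$ is continuous there. The difference is in the $o(\alpha)$ estimate.

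The paper first rewrites the divergence by Legendre duality as $D_\phi(z_-,z_\alpha)=D_{\phi^{\ast}}(p_\alpha,p)$. It then uses first-order (Fr\'echet) differentiability of $\phi^{\ast}$ at the single point $p$ to conclude $D_{\phi^{\ast}}(p_\alpha,p)=o(\|p_\alpha-p\|)=o(\alpha)$. This makes the last claim independent of the convergence $z_\alpha\to z_-$.

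You instead combine the symmetrized identity, the relation $\nabla\phi(z_\alpha)=p_\alpha$, and nonnegativity of $D_\phi(z_\alpha,z_-)$. This gives the explicit bound $0\le D_\phi(z_-,z_\alpha)\le\|p_\alpha-p\|\,\|z_\alpha-z_-\|$, to which you apply the continuity from your second step.

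Your route avoids the duality identity for Bregman divergences and yields a quantitative inequality. That inequality sharpens immediately to $O(\alpha^2)$ whenever $\nabla\phi^{\ast}$ is locally Lipschitz near $p$, which is the kind of estimate the paper later invokes for the entropy mirror map. The paper's route is shorter once the duality identity is granted, and it needs only pointwise differentiability of $\phi^{\ast}$ at $p$ for the final claim. Both rest on the same standard Legendre facts (Rockafellar, Thms.~25.5 and~26.5), so either argument suffices.
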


\begin{proof}
    The conjugate domain is open around $p$, and $\nabla\phi^{\ast}$ is continuous there.
    Legendre duality gives $D_\phi(z_-,z_\alpha)=D_{\phi^{\ast}}(p_\alpha,p)$.
    Differentiability of $\phi^{\ast}$ at $p$ implies $D_{\phi^{\ast}}(p_\alpha,p)=o(\|p_\alpha-p\|)=o(\alpha)$; the case $p_\alpha=p$ is immediate.
\end{proof}

\section{Auxiliary geometry and convex convergence analysis}
\label{app:convex}

We first discuss the choice of the auxiliary geometry $\phi$.
The remaining subsections provide the convergence analysis of GA-BPGc.

\subsection{Choosing the auxiliary geometry}
\label{app:auxiliary-geometry}

The BPG geometry $\psi$ is chosen to provide a relative upper bound and a tractable BPG subproblem.
The auxiliary geometry $\phi$ is instead chosen for a computable inverse mirror map and compatibility with $\Omega$; $f$ need not be relatively smooth with respect to $\phi$.
Table~\ref{tab:auxiliary-geometry} lists suggested pairs for GA-BPGc and GA-BPGnc.
GA-BPGsc retains $\phi=\psi$ to match the geometry of relative strong convexity.
These suggestions are based on domain compatibility and computational convenience, rather than a guaranteed improvement in convergence speed.
We write $\R_{++}^n$ for the strictly positive orthant, and $\S^n$ and $\S_{++}^n$ for the sets of real symmetric and symmetric positive-definite $n\times n$ matrices, respectively.
In the matrix rows, $\tr$ denotes the trace, $I$ is the identity matrix, and $X\prec Y$ means that $Y-X$ is positive definite.

\begin{table}[t]
    \centering
    \caption{Suggested pairs of BPG and auxiliary geometries and their domains.}
    \label{tab:auxiliary-geometry}

    \begin{tabular}{lclc}
        \toprule
        $\psi$ & $\Omega$ & $\phi$ & $\interior\dom\phi^{\ast}$ \\
        \midrule

        $-\sum_j\log x_j$ & $\R_{++}^n$
        & $\sum_j(x_j\log x_j-x_j)$ & $\R^n$ \\
        \addlinespace

        $\frac1r\sum_j x_j^{-r}$ & $\R_{++}^n$
        & $\sum_j(x_j\log x_j-x_j)$ & $\R^n$ \\
        \addlinespace

        $-\sum_j\log(x_j(1-x_j))$ & $(0,1)^n$
        & $\sum_j(x_j\log x_j+(1-x_j)\log(1-x_j))$ & $\R^n$ \\
        \addlinespace

        $-\log\det X$ & $\S_{++}^n$
        & $\tr(X\log X-X)$ & $\S^n$ \\
        \addlinespace

        $-\log\det(X(I-X))$ & $0\prec X\prec I$
        & $\tr(X\log X+(I-X)\log(I-X))$ & $\S^n$ \\
        \midrule

        $\frac1q\sum_j|x_j|^q$ & $\R^n$
        & $\frac1q\sum_j|x_j|^q$ & $\R^n$ \\
        \addlinespace

        $\frac14\|x\|^4+\frac12\|x\|^2$ & $\R^n$
        & $\frac14\|x\|^4+\frac12\|x\|^2$ & $\R^n$ \\
        \addlinespace

        $\sum_j h_\delta(x_j)$ & $\R^n$
        & $\sum_j h_\delta(x_j)$ & $\R^n$ \\
        \bottomrule
    \end{tabular}
\end{table}

Here $r>0$, $q>1$, and $\delta>0$.
The hypentropy generator is $h_\delta(t) = t\arsinh(t/\delta)-\sqrt{t^2+\delta^2}$.
For the full-space rows, $\phi(x)=\|x\|^2/2$ is an additional candidate when a simpler auxiliary update is preferred.

\paragraph{Domain compatibility.}
For the barrier--entropy pairs in Table~\ref{tab:auxiliary-geometry}, the inverse mirror maps of the auxiliary generators are defined on the full corresponding dual space and take values in the indicated domain.
They are componentwise exponentials, logistic maps, or their matrix counterparts.
The matrix rows use the Frobenius inner product $\langle A,B\rangle=\tr(A^\top B)$ and spectral matrix functions.
Additional constraints encoded by $\rho$ are not automatically preserved, so GA-BPGnc retains its check of membership in $\Omega\cap\dom\rho$.

\paragraph{Boundary comparison points and rate guarantees.}
With the convention $0\log0=0$, the entropy generators have finite values at finite boundary points, unlike their paired barriers.
This can make $D_\phi(u,z_0)$ finite for boundary comparison points in Theorem~\ref{thm:convex-posterior}.
The theorem yields a uniform accelerated rate when the accepted ratios $\lambda_k/\kappa_k$ are bounded away from zero.

\subsection{One-step estimate}
\label{app:convex-one-step-proof}

For this and the remaining subsections, we consider Algorithm~\ref{alg:convex-full} under Assumptions~\ref{ass:standing}, \ref{assumption:subproblem}, and~\ref{assumption:convex}, with
\begin{equation*}
    y_0=z_0\in \Omega\cap\dom\rho\cap\interior\dom\phi, \qquad \omega_0=0.
\end{equation*}
No comparison inequality between $D_\psi$ and $D_\phi$ is imposed.
Comparison points belong to $\mathcal U_\phi=\cl\Omega\cap\dom\Phi\cap\dom\phi$.

\begin{proof}[Proof of Lemma~\ref{lem:convex-one-step}]
    Write $\omega_-=\omega_{k-1}$, $y_-=y_{k-1}$, and $z_-=z_{k-1}$.
    Apply Lemma~\ref{lem:bpg-comparison} at the same trial $(x,\lambda)$ to $y_-$ and $u$:
    \begin{align*}
        \Phi(\widehat y)-\Phi(y_-)
        &\leq
        \langle g_\lambda(x),x-y_-\rangle-P_\lambda(x)-D_f(y_-,x),
        \\
        \Phi(\widehat y)-\Phi(u)
        &\leq
        \langle g_\lambda(x),x-u\rangle-P_\lambda(x)-D_f(u,x).
    \end{align*}
    Multiply these inequalities by $\omega_-$ and $\alpha$, respectively.
    Since $\omega=\omega_-+\alpha$, the coupling identity gives
    \begin{align}
        &\omega(\Phi(\widehat y)-\Phi(u))
        -\omega_-(\Phi(y_-)-\Phi(u))
        \notag\\
        &\qquad\leq
        \alpha\langle g_\lambda(x),z_--u\rangle-\omega P_\lambda(x)
        -\omega_-D_f(y_-,x)-\alpha D_f(u,x).
        \label{eq:convex-objective-step}
    \end{align}
    The mirror equation and Lemma~\ref{lem:mirror-identity} imply
    \begin{equation*}
        \alpha\langle g_\lambda(x),z_--u\rangle +D_\phi(u,\widehat z)-D_\phi(u,z_-) = D_\phi(z_-,\widehat z).
    \end{equation*}
    Adding $D_\phi(u,\widehat z)-D_\phi(u,z_-)$ to both sides of \eqref{eq:convex-objective-step} and using the mirror identity above gives the corresponding trial estimate.
    For an accepted trial, $(\omega,\alpha,x,\widehat y,\widehat z,\lambda) =(\omega_k,\alpha_k,x_k,y_k,z_k,\lambda_k)$, so the left-hand side becomes $\mathcal E_k^{\mathrm C}(u)-\mathcal E_{k-1}^{\mathrm C}(u)$.
    This proves~\eqref{eq:convex-one-step-main}.

    Convexity makes $D_f(u,x)\geq0$, and the acceptance test gives
    \begin{equation*}
        D_\phi(z_-,\widehat z)-\omega P_\lambda(x)-\omega_-D_f(y_-,x)\leq0.
    \end{equation*}
    Hence every accepted iteration decreases $\mathcal E_k^{\mathrm C}(u)$.
\end{proof}

\subsection{Telescoping and weight growth}
\label{app:convex-posterior-proof}
\label{app:convex-analysis}

\begin{proof}[Proof of Lemma~\ref{lem:convex-weight-growth}]
    For every accepted iteration,
    \begin{equation*}
        \sqrt{\omega_k}-\sqrt{\omega_{k-1}}
        =
        \frac{\alpha_k}{\sqrt{\omega_k}+\sqrt{\omega_{k-1}}}
        \geq
        \frac{\alpha_k}{2\sqrt{\omega_k}}
        =
        \sqrt{\frac{\lambda_k}{2\kappa_k}}.
    \end{equation*}
    Sum from $k=1$ to $N$, use $\omega_0=0$, and square the resulting nonnegative inequality.
\end{proof}

\begin{proof}[Proof of Theorem~\ref{thm:convex-posterior}]
    Telescoping gives $\mathcal E_N^{\mathrm C}(u) \leq\mathcal E_0^{\mathrm C}(u)=D_\phi(u,z_0)$.
    Equivalently,
    \begin{equation}
        \omega_N(\Phi(y_N)-\Phi(u))+D_\phi(u,z_N) \leq D_\phi(u,z_0). \label{eq:convex-telescope}
    \end{equation}
    Since $\omega_N>0$, dropping $D_\phi(u,z_N)\geq0$ and using Lemma~\ref{lem:convex-weight-growth} proves~\eqref{eq:convex-posterior}.
    If $u=x^\star$ is a minimizer with finite $\phi(x^\star)$ and $\lambda_k/\kappa_k\geq c>0$, its denominator is at least $cN^2$.
    This proves the stated conditional accelerated rate.
\end{proof}

The bound is meaningful for any finite comparator, not just a minimizer.
If every minimizer lies outside $\dom \phi$, substituting a minimizer gives no finite constant.
An approximation argument would then require separate control of $D_\phi(u,z_0)$ along the chosen approximations.
Likewise, the bound alone does not establish that $\sum_{k=1}^N\sqrt{\lambda_k/\kappa_k}$ diverges as $N\to\infty$.

\subsection{Finite termination and the stopping convention}
\label{app:convex-finite}

\begin{proposition}[Finite inner search at a nonstationary iterate]
    \label{prop:convex-finite}
    Fix an outer iterate $(\omega_-,y_-,z_-)$ of GA-BPGc with $0\notin\nabla f(y_-)+\partial\rho(y_-)$.
    If $\omega_-=0$, take $y_-=z_-$ as in the initialization.
    Then the inner search accepts a trial or returns a stationary point after finitely many trials.
\end{proposition}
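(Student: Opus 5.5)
The plan is to argue by contradiction: assume the inner search of GA-BPGc, started at the outer iterate $(\omega_-,y_-,z_-)$, runs through infinitely many trials without accepting and without returning. Each trial either divides $\lambda$ by $\gamma_-$ (upper-bound test failed) or multiplies $\kappa$ by $\gamma_\kappa$ (mirror-domain or acceptance test failed); within one search $\lambda$ is never increased. Proposition~\ref{prop:local-rs-backtracking} gives a threshold $1/L$ independent of the base point $x$. So only finitely many $\lambda$-reductions occur, even though $x$ is recomputed whenever $\kappa$ changes. Hence from some trial on, $\lambda\equiv\bar\lambda>0$ is fixed, the upper-bound test~\eqref{eq:local-rs-test} passes at every later trial, and $\kappa\to\infty$ geometrically. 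By Lemma~\ref{lem:coefficients}, $\alpha\downarrow0$. If $\omega_->0$, the coupling $x=(\omega_-y_-+\alpha z_-)/(\omega_-+\alpha)$ then tends to $y_-$. If $\omega_-=0$, $x=z_-=y_-$ for every trial. I will also use that $z_-\in\Omega\cap\interior\dom\phi$, since it is either $z_0=y_0$ or a previously accepted $\widehat z$.

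Next come the limits of the BPG quantities. Let $\bar y\in T_{\bar\lambda}(y_-)$. By Proposition~\ref{prop:bpg-unique} all solutions are interior, so Proposition~\ref{prop:bpg-continuity} gives $\widehat y\to\bar y$. Since $\nabla\psi$ is continuous on $\Omega$, $g_{\bar\lambda}(x)\to g_{\bar\lambda}(y_-)$, so the mirror step has the form $p=\nabla\phi(z_-)+\alpha v_\alpha$ with $v_\alpha=-g_{\bar\lambda}(x)$ bounded. Lemma~\ref{lem:small-mirror} then shows that eventually $p\in\interior\dom\phi^{\ast}$, $\widehat z\to z_-$, and $D_\phi(z_-,\widehat z)=o(\alpha)$. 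Because $\Omega$ is open and $z_-\in\Omega$, eventually $\widehat z\in\Omega$. Thus the domain checks eventually pass, and only the inequality~\eqref{eq:convex-certificate} remains to be verified.

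For~\eqref{eq:convex-certificate} I use nonstationarity. If some trial gives $\widehat y=x$, the algorithm returns a stationary point, which is one of the allowed outcomes; so assume $\widehat y\neq x$ throughout. Since $y_-$ is not stationary, \eqref{eq:bpg-fixed-point} gives $\bar y\neq y_-$, hence $D_\psi(y_-,\bar y)>0$ by strict convexity. Once~\eqref{eq:local-rs-test} holds, Lemma~\ref{lem:bpg-comparison} gives $P_{\bar\lambda}(x)\geq D_\psi(x,\widehat y)/\bar\lambda$, and the continuity of $\psi$ and $\nabla\psi$ on $\Omega$ gives $D_\psi(x,\widehat y)\to D_\psi(y_-,\bar y)$. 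Therefore $P_{\bar\lambda}(x)\geq c>0$ eventually, for some constant $c$.

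Now compare the two sides of~\eqref{eq:convex-certificate}. Convexity gives $D_f(y_-,x)\geq0$, so its right side is at least $\omega c\geq\alpha c$. Its left side is $o(\alpha)$. If $\omega_->0$ the right side is in fact bounded below by $\omega_-c>0$ while the left side tends to zero. If $\omega_-=0$ the comparison is $o(\alpha)$ against $\alpha c$. Either way the test eventually passes, which contradicts the assumption of infinitely many trials.

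I expect the main obstacle to be the positive lower bound on $P_{\bar\lambda}(x)$ along a moving base point. It needs Proposition~\ref{prop:bpg-continuity}, which requires interior solutions (supplied by Proposition~\ref{prop:bpg-unique} and Assumption~\ref{assumption:subproblem}), together with the fact that the upper-bound test holds at all late trials. This is also exactly where nonstationarity of $y_-$ enters; a weaker argument using only $P\geq0$ would not suffice in the case $\omega_-=0$.
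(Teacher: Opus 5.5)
Your proposal is correct and follows essentially the same route as the paper: finitely many $\lambda$-reductions via Proposition~\ref{prop:local-rs-backtracking}, then $\kappa\to\infty$ and $\alpha\to0$ on the tail, continuity of the BPG map plus nonstationarity of $y_-$ to keep $P_{\bar\lambda}(x)$ bounded away from zero, and Lemma~\ref{lem:small-mirror} to make the mirror checks and~\eqref{eq:convex-certificate} eventually pass. The differences are only cosmetic: you bound $P_{\bar\lambda}(x)$ below by $D_\psi(x,\widehat y)/\bar\lambda$ rather than passing to the limit $P_{\bar\lambda}(y_-)$, and you handle $\omega_->0$ and $\omega_-=0$ with a single comparison of $o(\alpha)$ against $\alpha c$.
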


\begin{proof}
    Proposition~\ref{prop:local-rs-backtracking} permits only finitely many reductions of $\lambda$, even though coupling changes $x$.
    On the tail of a hypothetical infinite search, $\lambda=\bar\lambda>0$ is fixed and every rejection increases $\kappa$.
    Thus $\kappa\to\infty$ and $\alpha\to0$ by Lemma~\ref{lem:coefficients}.

    If $\omega_->0$, then $x\to y_-$.
    Let $\bar y\in T_{\bar\lambda}(y_-)$.
    Proposition~\ref{prop:bpg-continuity} implies $\widehat y\to\bar y$, and $g_\lambda(x)$ remains bounded.
    The upper-bound tests on this tail and the lower bound on $P_\lambda(x)$ give
    \begin{equation*}
        P_\lambda(x)\longrightarrow P_{\bar\lambda}(y_-) \geq \bar\lambda^{-1} D_\psi(y_-,\bar y) >0.
    \end{equation*}
    The strict inequality uses the nonstationarity of $y_-$.
    The mirror argument tends to $\nabla\phi(z_-)$, so its domain check passes eventually and $\widehat z\to z_-$.
    Since $z_-\in\Omega$ and $\Omega$ is open, feasibility also holds eventually.
    Then $D_\phi(z_-,\widehat z)\to0$ whereas $\omega P_\lambda(x)\to \omega_-P_{\bar\lambda}(y_-)>0$.
    Since $D_f(y_-,x)\geq0$, the acceptance inequality eventually holds, a contradiction.

    If $\omega_-=0$, then $x=y_-=z_-$ and $\omega=\alpha$.
    For the fixed $\bar\lambda$, $g_\lambda(x)$ and $P_\lambda(x)>0$ are fixed.
    Lemma~\ref{lem:small-mirror} gives $D_\phi(z_-,\widehat z)=o(\alpha)$, whereas $\omega P_\lambda(x)=\alpha P_\lambda(x)$.
    The domain and feasibility checks pass eventually, and so does the acceptance inequality, again a contradiction.
\end{proof}

\paragraph{Already stationary previous iterates.}
Proposition~\ref{prop:convex-finite} does not by itself cover a stationary previous iterate, because its strict lower bound on the limiting value of $P_\lambda(x)$ then vanishes.

The stopping convention in Section~\ref{sec:setting-methods} includes a previous-iterate check for GA-BPGc and GA-BPGsc.
With $\lambda$ initialized at the preceding accepted stepsize, they compute $v\in T_\lambda(y_-)$ and return $y_-$ if $v=y_-$.
By~\eqref{eq:bpg-fixed-point}, this detects stationarity of $y_-$.
If the check does not terminate, the previous iterate is nonstationary and the finite-search argument applies.

The check adds at most one BPG solve per outer iteration and leaves the trial sequence unchanged whenever the previous iterate is nonstationary.

\begin{corollary}[Finite termination with the stopping check]
    \label{cor:convex-finite-guard}
    With this check, each outer iteration of GA-BPGc accepts a trial or returns a global minimizer after finitely many BPG solves.
    Without the check, Proposition~\ref{prop:convex-finite} and all bounds for completed iterations remain valid with their stated qualifications.
\end{corollary}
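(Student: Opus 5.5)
The plan is to split the corollary into its two claims and handle the guarded case by cases on whether the previous iterate is stationary.

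\emph{Guarded version.} Fix an outer iteration $k$ with previous iterate $(\omega_-,y_-,z_-)$. The check computes $v\in T_{\lambda_{k-1}}(y_-)$. This costs one BPG solve, and Assumption~\ref{assumption:subproblem} together with Proposition~\ref{prop:bpg-unique} make $v$ well defined and unique.
\begin{itemize}
\item If $v=y_-$, then \eqref{eq:bpg-fixed-point} gives $0\in\nabla f(y_-)+\partial\rho(y_-)$. Under Assumption~\ref{assumption:convex}, $\Phi=f+\rho$ is convex on $\cl\Omega$, and $\nabla f(y_-)+\partial\rho(y_-)\subset\partial\Phi(y_-)$ since $y_-\in\Omega$. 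So stationarity means $y_-$ is a global minimizer of $\Phi$ over $\cl\Omega$. Hence the algorithm returns a global minimizer after finitely many solves.
\item If $v\neq y_-$, the same equivalence shows that $y_-$ is nonstationary. Then Proposition~\ref{prop:convex-finite} applies: the inner search either accepts a trial or returns a stationary point after finitely many trials. Each trial uses exactly one BPG solve, so the total number of solves is finite.
\item A stationary point returned inside the search arises only when some trial gives $\widehat y=x$. By Lemma~\ref{lem:bpg-comparison} and \eqref{eq:bpg-fixed-point}, that point is stationary, and by the convexity argument above it is a global minimizer.
\end{itemize}

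\emph{Initialization.} At $k=1$ we have $\omega_-=0$ and $y_0=z_0$, which is exactly the situation covered by the second case of Proposition~\ref{prop:convex-finite}. The same two-case argument therefore applies unchanged.

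\emph{Unguarded version.} Proposition~\ref{prop:convex-finite} never used the check in its proof, so it still holds as stated. Lemma~\ref{lem:convex-one-step}, Lemma~\ref{lem:convex-weight-growth} and Theorem~\ref{thm:convex-posterior} concern only completed iterations and their accepted parameters. So they hold verbatim. The only qualification is the one noted before this corollary: if $y_-$ is stationary, the inner search may fail to terminate, and the bounds then cover only the iterations actually completed. A further point is that when $y_-$ is nonstationary the check leaves the trial sequence unchanged, so the guarded and unguarded runs coincide up to the check.

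The only delicate step is the passage from ``stationary'' to ``global minimizer.'' It requires the subdifferential sum rule on $\Omega\cap\dom\rho$ and the fact that the minimization over $\cl\Omega$ coincides with minimizing $\Phi$ plus the indicator of $\cl\Omega$. This uses the interiority $y_-\in\Omega$ established in Proposition~\ref{prop:bpg-unique}.
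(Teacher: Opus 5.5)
Your proposal is correct and follows the paper's proof. Like the paper, you detect stationarity of $y_-$ through \eqref{eq:bpg-fixed-point}, upgrade interior stationarity to global optimality by convexity, and otherwise invoke Proposition~\ref{prop:convex-finite}, applying the same argument to in-trial stationary points. One simplification: the step you single out as delicate needs no subdifferential sum rule. Adding the gradient inequality for the convex $f$ at $y_-\in\Omega$ to the subgradient inequality for $\rho$ supplied by stationarity already gives $\Phi(u)\geq\Phi(y_-)$ for every $u$, and this is exactly the paper's argument.
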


\begin{proof}
    By~\eqref{eq:bpg-fixed-point}, the check detects stationarity of $y_-$.
    An interior stationary point minimizes $\Phi$ over $\cl\Omega$: add the convex supporting inequality for $f$ at that point to the supporting inequality for $\rho$ supplied by stationarity.
    If the check does not terminate, Proposition~\ref{prop:convex-finite} applies.
    The same argument shows that an in-trial stationary point is a global minimizer.
\end{proof}

\subsection{Convergence under sublevel conditions}
\label{app:sublevel-results}
\begin{corollary}[Rate for Burg--entropy geometry]
    \label{cor:burg-convex-complexity}
    Suppose that \cref{ass:standing,assumption:subproblem,assumption:convex} hold, with $\psi(x)=-\sum_j\log x_j$ and $\phi(x)=\sum_j(x_j\log x_j-x_j)$.
    Assume that $\{x\in\cl\Omega:\Phi(x)\leq r\}$ is bounded for every $r\in\R$, and that $\Phi$ has a minimizer in $\Omega$.
    Then GA-BPGc satisfies $\sup_k\kappa_k<\infty$ and $\Phi(y_k)-\Phi^\star=\O(k^{-2})$.
    The total number of BPG solves through $k$ completed iterations, including rejected trials and stopping checks, is $\O(k)$.
\end{corollary}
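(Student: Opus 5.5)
The plan is to show that the acceptance test \eqref{eq:convex-certificate} is automatically satisfied once $\kappa$ exceeds a threshold $\bar\kappa$ that does not depend on $k$. Since each outer iteration starts from $\kappa_{k-1}/\gamma_+$ and only multiplies by $\gamma_\kappa$ on rejection, this gives $\kappa_k\le\max\{\kappa_0,\gamma_\kappa\bar\kappa\}$. The rate $\O(k^{-2})$ then follows from Theorem~\ref{thm:convex-posterior} with $u=x^\star\in\Omega\subset\dom\phi$. That step also needs $\lambda_k\ge\underline\lambda$, which Proposition~\ref{prop:accepted-lambda-lower} supplies, so that $\lambda_k/\kappa_k\ge\underline\lambda/\sup_k\kappa_k>0$.

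The first step is a \emph{compactness} step. Taking $u=x^\star$ in \eqref{eq:convex-telescope} gives $D_\phi(x^\star,z_k)\le D_\phi(x^\star,z_0)$ for all $k$. Since $D_\phi(x^\star,z)=\sum_j(x^\star_j\log(x^\star_j/z_j)-x^\star_j+z_j)$ with $x^\star_j>0$, this confines $z_k$ to a compact box $[a,b]^n\subset\R^n_{++}$. For $y_k$, I would use $\Phi(y_k)\le\Phi(y_0)+\text{const}$ (from the same Lyapunov inequality once $\omega_k\ge\omega_1$, together with $\Phi(y_1)$ bounded) and then the bounded sublevel hypothesis. This makes $y_k$, and hence every coupled point $x$ and every trial $\widehat y$, bounded above. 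The delicate part is a uniform \emph{lower} bound on $x_j$ and $\widehat y_j$. For $x$, I would use $x\ge(\alpha/\omega)z_-\ge(\alpha/\omega)a$ together with the coefficient relation $\kappa\alpha^2=2\lambda\omega$. For $\widehat y$, I would use the relative-smoothness test: it bounds $D_\psi(\widehat y,x)$ and thereby the ratio $t_j\coloneq x_j/\widehat y_j$ through the descent inequality \eqref{eq:gap-descent} and boundedness of $\Phi$ from below.

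The second step is the \emph{local comparison}. With $\nabla\psi(x)=-1/x$, we have $g_j=(t_j-1)/(\lambda x_j)$ and $\widehat z_j=z_{-,j}e^{-\alpha g_j}$. This gives
\begin{equation*}
    D_\phi(z_-,\widehat z)=\sum_j z_{-,j}\bigl(e^{-\alpha g_j}-1+\alpha g_j\bigr)\le\tfrac12 e^{\alpha|g|_\infty}\sum_j z_{-,j}\alpha^2g_j^2 = e^{\alpha|g|_\infty}\frac{\omega}{\kappa\lambda}\sum_j \frac{z_{-,j}(t_j-1)^2}{x_j^2},
\end{equation*}
while Lemma~\ref{lem:bpg-comparison} gives $\omega P_\lambda(x)\ge(\omega/\lambda)\sum_j(t_j-1-\log t_j)$. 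On a range where $t_j\in[t_{\min},t_{\max}]$ and $x_j\ge c>0$, we have $t-1-\log t\ge c'(t-1)^2$. Hence, for $\kappa\ge\bar\kappa\coloneq 2b\,e/(c^2c')$, and provided $\alpha|g|_\infty\le1$ (which itself holds for $\kappa$ large because $\alpha^2|g|^2\lesssim\omega/(\kappa\lambda)$), the test holds. Dropping the nonnegative term $\omega_-D_f(y_-,x)$ is harmless. The $\omega$ factors cancel on both sides, and this cancellation is what makes $\bar\kappa$ independent of $k$, even though $\omega_k\to\infty$ and $\lambda$ may drift upward.

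I expect the main obstacle to be the uniform two-sided control of the ratios $t_j$ and the lower bound on $x_j$, in particular the exponential factor $e^{\alpha|g|_\infty}$. The quantity $\alpha/\omega\sim\sqrt{2\lambda/(\kappa\omega)}$ shrinks as $\omega$ grows, so the lower bound on $x$ must come mainly from $y_-$ rather than from $z_-$. My first attempt would be to show that the iterates $y_k$ stay in a compact subset of $\Omega$ because $\Phi(y_k)-\Phi^\star\le D_\phi(x^\star,z_0)/\omega_k$ and the Burg-induced barrier behaviour near the boundary. If that fails, I would fall back on using the term $\omega_-D_f(y_-,x)$ to absorb the contributions from coordinates where $x_j$ is small.

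Finally, the count of BPG solves: by \eqref{eq:total-backtracking}-type telescoping, the total number of $\lambda$-reductions is $\O(k)$. The number of $\kappa$-increases in iteration $k$ is at most $\log(\kappa_k\gamma_+/\kappa_{k-1})/\log\gamma_\kappa$. Summing this telescopes to $\O(k)+\log(\sup_k\kappa_k/\kappa_0)/\log\gamma_\kappa$. Adding one stopping-check solve per iteration then gives $\O(k)$ solves in total.
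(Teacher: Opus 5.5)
Your outer structure matches the paper's: a $k$-independent threshold $\bar\kappa$ above which every upper-bound-passing trial is accepted, the warm-start bound on $\kappa_k$, Theorem~\ref{thm:convex-posterior} with $\lambda_k\ge\underline\lambda$, and the logarithmic count of rejections. Your entropy-versus-$P_\lambda$ comparison is also the right local estimate.

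The first genuine gap is the one you flag yourself. You never place all trial points $x,\widehat y$ in one compact subset of $\Omega$, and none of your suggested routes does so:
\begin{itemize}
\item The objective gap cannot. $\Phi$ has no barrier, and the hypotheses allow boundary minimizers alongside the interior one. For example, take $f(x)=x_1+x_2-\log(x_1+x_2)$, which is $1$-smooth relative to Burg, with $\rho$ the indicator of $[0,2]^2$; then every point of $\{x\ge0: x_1+x_2=1\}$ is a minimizer.
\item The term $\omega_-D_f(y_-,x)$ vanishes identically for affine $f$, so it cannot absorb anything in general.
\item Your use of \eqref{eq:gap-descent} needs $\Phi(x)<\infty$. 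GA-BPGc only checks $\widehat z\in\Omega$, so $z_-$, and hence $x$, may lie outside $\dom\rho$. Even when $\Phi(x)$ is finite, $D_\psi(x,\widehat y)\le\lambda(\Phi(x)-\Phi^\star)$ degrades as the uncapped $\lambda$ grows.
\end{itemize}
The missing idea is to use $D_\psi(u,\cdot)$, for the interior minimizer $u$, as a potential. Lemma~\ref{lem:bpg-comparison} and the three-point identity give $D_\psi(u,\widehat y)+\lambda(\Phi(\widehat y)-\Phi^\star)\le D_\psi(u,x)$ for every upper-bound-passing trial, with no condition on $\lambda$ or $\Phi(x)$. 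Along the segment from the bounded $y_-$ to $z_-\in[a,b]^n$, the derivative of $D_\psi(u,\cdot)$ toward $z_-$ is $\sum_j(x_j-u_j)(z_{-,j}-x_j)/x_j^2$. It is negative once $D_\psi(u,x)$ exceeds some constant $C$, so $D_\psi(u,x)\le\max\{D_\psi(u,y_-),C\}$. Induction then traps every $x$ and $\widehat y$ in a single sublevel set of $D_\psi(u,\cdot)$, which is compact in $\Omega$.

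The second gap is the factor $e^{\alpha\|g\|_\infty}$, which you treat as easy. Your estimate $\alpha^2\|g\|^2=O(\omega/(\kappa\lambda))$ grows with $\omega_k\to\infty$. Enforcing $\alpha\|g\|_\infty\le1$ through it therefore gives a threshold that grows with $k$, which is exactly what must be avoided. Nor can the factor be dropped, since $e^{-s}-1+s$ admits no quadratic bound as $s\to-\infty$. What you need is that $\omega P_\lambda(x)$ stays bounded along the run. Apply \eqref{eq:convex-objective-step} with the interior minimizer $u$, and use $\Phi(\widehat y)\ge\Phi^\star$, $D_f\ge0$, and the accepted Lyapunov bound $\omega_-(\Phi(y_-)-\Phi^\star)\le D_\phi(u,z_0)$. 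This gives $\omega P_\lambda(x)\le\|\alpha g\|\,\|z_--u\|+D_\phi(u,z_0)=O(1+\|\alpha g\|)$. Combine this with $\lambda\|g\|^2=O(P_\lambda(x))$ on the compact set from the first step and with $\kappa\alpha^2=2\lambda\omega$. You get $\|\alpha g\|^2=O((1+\|\alpha g\|)/\kappa)$, hence $\|\alpha g\|=O(\kappa^{-1/2})$ uniformly over trials. Only then does your quadratic comparison, in which $\omega/\lambda$ cancels, yield a $k$-independent $\bar\kappa$.
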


\begin{proof}
    Fix a minimizer $u\in\Omega$.
    Equation~\eqref{eq:convex-telescope} gives $\Phi(y_k)\leq\Phi(y_0)$ and $D_\phi(u,z_k)\leq D_\phi(u,y_0)$.
    Hence $\{y_k\}$ is bounded, and the entropy formula places $\{z_k\}$ in a compact subset of $\Omega$.
    Using the trial notation of Algorithm~\ref{alg:convex-full}, Lemma~\ref{lem:bpg-comparison} and the three-point identity give
    \begin{equation}
        D_\psi(u,\widehat y) +\lambda(\Phi(\widehat y)-\Phi^\star) \leq D_\psi(u,x) \label{eq:burg-bpg-comparison}
    \end{equation}
    whenever~\eqref{eq:local-rs-test} holds.

    On the bounded coupling region, the directional derivative of $D_\psi(u,\cdot)$ toward $z$ equals $\sum_j(x_j-u_j)(z_j-x_j)/x_j^2$.
    It is negative for sufficiently large $D_\psi(u,x)$, uniformly in $z$: a coordinate approaching zero contributes $-\infty$, while the other contributions are bounded above.
    Along coupling segments, this bounds $D_\psi(u,x)$ by the larger of $D_\psi(u,y)$ and a fixed constant.
    Induction using~\eqref{eq:burg-bpg-comparison} therefore places all upper-bound-passing trial points $x,\widehat y$ in a fixed compact subset of $\Omega$, including those subsequently rejected by the mirror criterion.

    On these compact sets, the Burg formula and \eqref{eq:convex-objective-step}, together with the accepted Lyapunov bound, give uniformly over trials
    \begin{equation*}
        \lambda\|g_\lambda(x)\|^2=\O(P_\lambda(x)), \qquad \omega P_\lambda(x) =\O(1+\|\alpha g_\lambda(x)\|).
    \end{equation*}
    Since $\kappa\alpha^2=2\lambda\omega$, it follows that $\|\alpha g_\lambda(x)\|=\O(\kappa^{-1/2})$ as $\kappa\to\infty$.
    The entropy mirror map is everywhere defined and positive, with $\widehat z_j=z_j e^{-\alpha g_\lambda(x)_j}$, and yields $D_\phi(z,\widehat z) =\O(\alpha^2\|g_\lambda(x)\|^2) =\O(\omega P_\lambda(x)/\kappa)$.
    Since the preceding estimate is uniform over trials, every trial satisfying~\eqref{eq:local-rs-test} with sufficiently large $\kappa$ also satisfies $D_\phi(z,\widehat z)\leq\omega P_\lambda(x)$.
    This implies~\eqref{eq:convex-certificate} because $D_f(y,x)\geq0$, while the entropy mirror map already satisfies both domain checks.
    Thus no $\kappa$-dependent rejection occurs above a fixed threshold independent of the iteration.
    Since the warm start never increases $\kappa$ and each backtracking increase multiplies it by $\gamma_\kappa$, the accepted values are bounded by the larger of $\kappa_0$ and $\gamma_\kappa$ times this threshold.
    Hence $\sup_k\kappa_k<\infty$.

    Proposition~\ref{prop:accepted-lambda-lower} and Theorem~\ref{thm:convex-posterior} now give the stated rate.
    Taking logarithms of the multiplicative parameter updates, using the stepsize lower bound and the bound on $\kappa_k$, shows that the total number of rejections is $\O(k)$.
    Including successful trials and stopping checks proves the BPG-solve bound.
\end{proof}

\subsection{Euclidean specialization}
\label{app:convex-euclidean}

\begin{corollary}[Euclidean rate with fixed $\kappa$]
    \label{cor:convex-euclidean}
    Suppose that \cref{ass:standing,assumption:subproblem,assumption:convex} hold, $\psi=\phi=\|\cdot\|^2/2$, and $\Omega=\R^n$.
    Run GA-BPGc with $\gamma_+=1$ and $\kappa_0\geq2$, and set $\underline\lambda=\min\{\lambda_0,1/(\gamma_-L)\}$.
    Then $\lambda_k\geq\underline\lambda$ and $\kappa_k=\kappa_0$ at every completed iteration.
    For every $N\geq1$ and $u\in\dom\Phi$,
    \begin{equation*}
        \Phi(y_N)-\Phi(u) \leq \frac{\kappa_0\|u-z_0\|^2}{\underline\lambda N^2}.
    \end{equation*}
    The number of BPG solves through $N$ completed iterations is $\O(N)$.
\end{corollary}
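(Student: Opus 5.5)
The plan is to show that in the Euclidean case every trial with $\kappa\geq2$ passes the mirror-domain, feasibility, and acceptance checks. Then $\kappa$ is never increased. With $\gamma_+=1$ the warm start gives $\widehat\kappa=\kappa_{k-1}$, so induction yields $\kappa_k=\kappa_0$. The rate then follows from Theorem~\ref{thm:convex-posterior} and the stepsize lower bound of Proposition~\ref{prop:accepted-lambda-lower}.

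\textbf{Domain checks.} With $\phi=\|\cdot\|^2/2$ we have $\interior\dom\phi^{\ast}=\R^n=\Omega$, so the mirror-domain and feasibility checks always pass.

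\textbf{Acceptance.} Write $g=g_\lambda(x)=(x-\widehat y)/\lambda$. The mirror update gives $\widehat z=z-\alpha g$, hence $D_\phi(z,\widehat z)=\tfrac{\alpha^2}{2}\|g\|^2=\tfrac{\alpha^2}{2\lambda^2}\|x-\widehat y\|^2$. By Lemma~\ref{lem:bpg-comparison}, once the upper-bound test \eqref{eq:local-rs-test} holds we have $P_\lambda(x)\geq\lambda^{-1}D_\psi(x,\widehat y)=\tfrac{1}{2\lambda}\|x-\widehat y\|^2$. The coefficient equation $\kappa\alpha^2=2\lambda\omega$ then gives
\[
D_\phi(z,\widehat z)=\frac{\omega}{\kappa\lambda}\|x-\widehat y\|^2\leq\frac{2}{\kappa}\,\omega P_\lambda(x)\leq \omega P_\lambda(x)
\]
whenever $\kappa\geq2$. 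Since $D_f(y,x)\geq0$ by convexity, \eqref{eq:convex-certificate} holds. So the only rejections are failed upper-bound tests, which reduce $\lambda$ without changing $\kappa$.

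\textbf{Rate.} Proposition~\ref{prop:accepted-lambda-lower} gives $\lambda_k\geq\underline\lambda$. Hence $\sum_{k=1}^N\sqrt{\lambda_k/\kappa_k}\geq N\sqrt{\underline\lambda/\kappa_0}$, and \eqref{eq:convex-posterior} with $D_\phi(u,z_0)=\|u-z_0\|^2/2$ gives $\Phi(y_N)-\Phi(u)\leq \kappa_0\|u-z_0\|^2/(\underline\lambda N^2)$. Here $\mathcal U_\phi=\dom\Phi$ because $\dom\phi=\R^n=\cl\Omega$.

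\textbf{Counting BPG solves.} This is the step that needs the most care. With $\gamma_+=1$ the trial stepsize never increases across iterations, so Proposition~\ref{prop:local-rs-backtracking} bounds the total number of failed upper-bound tests over the whole run by $\log(\lambda_0/\underline\lambda)/\log\gamma_-$. Each outer iteration also uses one successful trial and at most one stopping check. The total number of BPG solves is therefore $2N+\O(1)=\O(N)$.
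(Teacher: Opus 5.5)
Your proof is correct and follows essentially the same route as the paper: the domain checks are vacuous, and the bound $D_\phi(z_-,\widehat z)=\tfrac{\alpha^2}{2}\|g\|^2=\tfrac{\lambda\omega}{\kappa}\|g\|^2\le\omega P_\lambda(x)$ for $\kappa\ge2$ gives acceptance, so $\kappa$ is never increased and $\gamma_+=1$ keeps $\kappa_k=\kappa_0$. The rate then comes from Proposition~\ref{prop:accepted-lambda-lower} and Theorem~\ref{thm:convex-posterior}, and the $\O(N)$ solve count from the finitely many stepsize reductions plus at most one stopping check per outer iteration. The one small difference is that your explicit cap $\log(\lambda_0/\underline\lambda)/\log\gamma_-$ on the number of reductions comes from the argument behind Proposition~\ref{prop:accepted-lambda-lower}, not from Proposition~\ref{prop:local-rs-backtracking}, which by itself only gives finiteness.
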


\begin{proof}
    Consider a trial with $\kappa=\kappa_0$ satisfying \eqref{eq:local-rs-test}, and write $g=g_\lambda(x)$.
    In Euclidean geometry,
    \begin{equation*}
        P_\lambda(x)\geq\frac{\lambda}{2}\|g\|^2,
        \qquad
        D_\phi(z_-,\widehat z)
        =\frac{\alpha^2}{2}\|g\|^2
        =\frac{\lambda\omega}{\kappa_0}\|g\|^2
        \leq\omega P_\lambda(x).
    \end{equation*}
    Since $D_f(y_-,x)\geq0$, the acceptance test \eqref{eq:convex-certificate} holds.
    The domain checks are vacuous.
    Thus no $\kappa$ increase occurs, and $\gamma_+=1$ gives $\kappa_k=\kappa_0$.
    Proposition~\ref{prop:accepted-lambda-lower} gives the stepsize lower bound, and Theorem~\ref{thm:convex-posterior} gives the stated rate.
    Only finitely many stepsize reductions occur because stepsizes are never increased and every trial with $\lambda\leq1/L$ passes the upper-bound test.
    The stopping convention adds at most one BPG solve per outer iteration, proving the cost bound.
\end{proof}

\section{Convergence analysis for relatively strongly convex objectives}
\label{app:strong}

We impose the relatively strongly convex condition of Assumption~\ref{ass:strong}, with known $\mu>0$.
Comparison points belong to $\mathcal U_\psi=\cl\Omega\cap\dom \Phi\cap\dom \psi$.
The BPG and mirror updates both use $\psi$.
The stopping convention in Section~\ref{sec:setting-methods} applies; its tests are omitted from the pseudocode.

\subsection{Algorithm}
\label{app:strong-algorithm}

\begin{algorithm}[t]
    \caption{GA-BPGsc}
    \label{alg:strong-full}
    \SetKwFunction{BT}{Backtracking}

    \KwIn{
        Known $\mu>0$;
        $y_0\in\Omega\cap\dom \rho$;
        $\lambda_0,\kappa_0>0$;
        $\gamma_+\geq1$; $\gamma_-,\gamma_\kappa>1$.
    }

    $z_0\gets y_0$, $\omega_0\gets0$, $\theta_0\gets1$\;

    \For{$k=1,2,\ldots$}{
    $(\widehat\lambda,\widehat\kappa)\gets(\gamma_+\lambda_{k-1},\kappa_{k-1}/\gamma_+)$\;
    $(\alpha_k,\omega_k,\theta_k,x_k,y_k,z_k,g_k,\lambda_k,\kappa_k)\gets$
    \BT{$\omega_{k-1},\theta_{k-1},y_{k-1},z_{k-1},\widehat\lambda,\widehat\kappa$}\;
    }

    \Procedure{\BT{$\omega_-,\theta_-,y,z,\lambda,\kappa$}}{
        \If(\Comment*[f]{Criterion}){$\kappa\leq2\mu\lambda$}{
            \Return{\BT{$\omega_-,\theta_-,y,z,\lambda,\gamma_\kappa\kappa$}}\;
        }

        $d\gets\kappa-2\mu\lambda$,
        $\alpha\gets
            [\lambda(\theta_-+\mu \omega_-)+\sqrt{\lambda^2(\theta_-+\mu \omega_-)^2+2\lambda d\omega_-\theta_-}]/d$
        \Comment*{Coupling}

        $\omega\gets \omega_-+\alpha$,
        $\theta\gets \theta_-+\mu \alpha$,
        $x\gets(\omega_-y+\alpha z)/\omega$\;

        $\widehat y\in T_\lambda(x)$
        \Comment*{BPG update}

        \If{$D_f(\widehat y,x)>D_\psi(\widehat y,x)/\lambda$}{
            \Return{\BT{$\omega_-,\theta_-,y,z,\lambda/\gamma_-,\kappa$}}\;
        }

        $g_\lambda(x)\gets
            [\nabla\psi(x)-\nabla\psi(\widehat y)]/\lambda$\;
        $P_\lambda(x)\gets
            [D_\psi(x,\widehat y)+D_\psi(\widehat y,x)]/\lambda-D_f(\widehat y,x)$\;
        $p\gets
            [\theta_-\nabla\psi(z)+\mu \alpha\nabla\psi(x)-\alpha g_\lambda(x)]/\theta$\;

        \If{$p\notin\interior\dom \psi^{\ast}$}{
            \Return{\BT{$\omega_-,\theta_-,y,z,\lambda,\gamma_\kappa\kappa$}}\;
        }

        $\widehat z\gets\nabla\psi^{\ast}(p)$
        \Comment*{Mirror update}

        \If(\Comment*[f]{Criterion}){$\theta D_\psi(z,\widehat z)-\mu(\omega_-D_\psi(y,x)+\alpha D_\psi(z,x))>\omega P_\lambda(x)$}{
            \Return{\BT{$\omega_-,\theta_-,y,z,\lambda,\gamma_\kappa\kappa$}}\;
        }

        \Return{$(\alpha,\omega,\theta,x,\widehat y,\widehat z,g_\lambda(x),\lambda,\kappa)$}\;
    }
\end{algorithm}

The acceptance test is given by~\eqref{eq:strong-certificate}.
The conjugate-domain check is performed before the inverse mirror map; that map returns a point in $\Omega$.
A rejected trial retains the previous accepted points and weights, as well as any changes to the trial $\lambda$ and $\kappa$.
Each recursive call retries with these updated trial parameters.
Induction gives $\theta_k=1+\mu \omega_k$.

\subsection{Lyapunov decrease and the product bound}
\label{app:strong-analysis}

We use the Lyapunov function $\mathcal E_k^{\mathrm S}(u)$ defined in~\eqref{eq:strong-energy}.

\begin{proof}[Proof of Lemma~\ref{lem:strong-one-step}]
    Consider a trial satisfying the coupling and mirror-update equations of GA-BPGsc, with $\widehat y\in T_\lambda(x)$ and a valid mirror candidate $\widehat z\in\Omega$, and set $\omega=\omega_{k-1}+\alpha$ and $\theta=\theta_{k-1}+\mu \alpha$.
    Apply~\eqref{eq:bpg-relative-lower} with comparison points $y_{k-1}$ and $u$, multiply by $\omega_{k-1}$ and $\alpha$, respectively, and add.
    The coupling identity gives
    \begin{align*}
        &\omega(\Phi(\widehat y)-\Phi(u))
        -\omega_{k-1}(\Phi(y_{k-1})-\Phi(u))
        \\
        &\quad\leq
        \alpha\langle g_\lambda(x),z_{k-1}-u\rangle-\omega P_\lambda(x)
        -\mu \omega_{k-1}D_\psi(y_{k-1},x)
        -\mu \alpha D_\psi(u,x).
    \end{align*}
    The weighted mirror identities \eqref{eq:weighted-mirror-identity} and~\eqref{eq:weighted-mirror-remainder} imply
    \begin{align*}
        &\alpha\langle g_\lambda(x),z_{k-1}-u\rangle
        +\theta D_\psi(u,\widehat z)
        -\theta_{k-1}D_\psi(u,z_{k-1})
        \\
        &\quad=
        \mu \alpha D_\psi(u,x)
        +\theta D_\psi(z_{k-1},\widehat z)
        -\mu \alpha D_\psi(z_{k-1},x).
    \end{align*}
    Adding the change in weighted Bregman divergence to the objective estimate cancels the terms containing $D_\psi(u,x)$.
    For an accepted trial, $(\omega,\theta,\alpha,x,\widehat y,\widehat z,\lambda) =(\omega_k,\theta_k,\alpha_k,x_k,y_k,z_k,\lambda_k)$, so the left-hand side becomes $\mathcal E_k^{\mathrm S}(u)-\mathcal E_{k-1}^{\mathrm S}(u)$, which yields~\eqref{eq:strong-one-step}.
\end{proof}

\begin{proof}[Proof of~\eqref{eq:strong-A-growth}]
    The coefficient condition gives $\kappa_k>2\mu\lambda_k$, hence $\sqrt{2\mu\lambda_k/\kappa_k}\in(0,1)$.
    Since $\theta_k=1+\mu \omega_k\geq\mu \omega_k$, the coefficient equation implies
    \begin{equation*}
        \alpha_k = \sqrt{\frac{2\lambda_k\omega_k\theta_k}{\kappa_k}} \geq \sqrt{\frac{2\mu\lambda_k}{\kappa_k}}\,\omega_k.
    \end{equation*}
    Therefore
    \begin{equation*}
        \omega_{k-1}=\omega_k-\alpha_k \leq\left(1-\sqrt{\frac{2\mu\lambda_k}{\kappa_k}}\right)\omega_k.
    \end{equation*}
    Iterating this inequality from $k=2$ to $N$ proves~\eqref{eq:strong-A-growth}.
    For $N=1$, the assertion holds with an empty product.
\end{proof}

\begin{proof}[Proof of Theorem~\ref{thm:strong-main}]
    Fix $u\in\mathcal U_\psi$.
    Lemma~\ref{lem:strong-one-step} and the acceptance test give
    \begin{equation*}
        \mathcal E_k^{\mathrm S}(u) \leq\mathcal E_{k-1}^{\mathrm S}(u)
    \end{equation*}
    at every completed iteration.
    Since $\omega_0=0$ and $\theta_0=1$, telescoping yields
    \begin{equation*}
        \mathcal E_N^{\mathrm S}(u) \leq\mathcal E_0^{\mathrm S}(u) =D_\psi(u,z_0).
    \end{equation*}
    Equivalently,
    \begin{equation}
        \omega_N(\Phi(y_N)-\Phi(u)) +\theta_ND_\psi(u,z_N) \leq D_\psi(u,z_0). \label{eq:strong-telescope}
    \end{equation}
    Dropping $\theta_ND_\psi(u,z_N)\geq0$ and using \eqref{eq:strong-A-growth}, we obtain
    \begin{equation*}
        \Phi(y_N)-\Phi(u)
        \leq
        \frac{D_\psi(u,z_0)}{\omega_N}
        \leq
        \frac{D_\psi(u,z_0)}{\omega_1}
        \prod_{k=2}^N\left(1-\sqrt{\frac{2\mu\lambda_k}{\kappa_k}}\right),
    \end{equation*}
    which proves~\eqref{eq:strong-product}.
    Taking $u=x^\star$ gives the optimality-gap bound.
    If $\sqrt{2\mu\lambda_k/\kappa_k}\geq q_{\min}>0$, then
    \begin{equation*}
        \prod_{k=2}^N\left(1-\sqrt{\frac{2\mu\lambda_k}{\kappa_k}}\right) \leq (1-q_{\min})^{N-1},
    \end{equation*}
    so the objective gap converges linearly.
\end{proof}

\begin{corollary}[Distance statements and boundary points]
    \label{cor:strong-distance}
    For a minimizer $x^\star\in\mathcal U_\psi$,
    \begin{equation}
        D_\psi(x^\star,z_N) \leq \frac{D_\psi(x^\star,y_0)}{\theta_N}. \label{eq:strong-aux-distance}
    \end{equation}
    If, in addition, $x^\star\in\Omega$, then
    \begin{equation}
        D_\psi(y_N,x^\star)
        \leq
        \frac{D_\psi(x^\star,y_0)}{\mu \omega_1}
        \prod_{k=2}^N\left(1-\sqrt{\frac{2\mu\lambda_k}{\kappa_k}}\right).
        \label{eq:strong-primal-distance}
    \end{equation}
\end{corollary}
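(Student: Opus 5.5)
Both bounds will be read off the telescoped inequality~\eqref{eq:strong-telescope} with $u=x^\star$, which is admissible because $x^\star\in\mathcal U_\psi$. At a minimizer, $\Phi(y_N)-\Phi(x^\star)\geq0$, so the first term on the left of~\eqref{eq:strong-telescope} is nonnegative and may be dropped. Using $z_0=y_0$ gives
\[
\theta_N D_\psi(x^\star,z_N)\leq D_\psi(x^\star,y_0),
\]
which is~\eqref{eq:strong-aux-distance}, since $\theta_N=1+\mu\omega_N\geq1>0$.

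For~\eqref{eq:strong-primal-distance}, we first turn the objective gap into a Bregman distance from the minimizer. Assume $x^\star\in\Omega$, so that $D_\psi(\cdot,x^\star)$ and $D_f(\cdot,x^\star)$ are defined with $x^\star$ as the base point. Relative strong convexity (\cref{ass:strong}), applied with $v=x^\star\in\Omega$ and $u=y_N\in\cl\Omega$, gives
\[
f(y_N)-f(x^\star)-\langle\nabla f(x^\star),y_N-x^\star\rangle\geq\mu D_\psi(y_N,x^\star).
\]
Since $x^\star$ is an interior minimizer, it is stationary: $-\nabla f(x^\star)\in\partial\rho(x^\star)$. The subgradient inequality for $\rho$ at $x^\star$ then gives $\rho(y_N)-\rho(x^\star)\geq-\langle\nabla f(x^\star),y_N-x^\star\rangle$. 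Adding the two inequalities yields
\[
\Phi(y_N)-\Phi(x^\star)\geq\mu D_\psi(y_N,x^\star).
\]

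Stationarity of an interior minimizer needs a short justification. I would invoke the first-order optimality condition for minimizing $f+\rho$ at an interior point where $f$ is differentiable: for every direction, the one-sided directional derivative is nonnegative. Together with convexity of $\rho$, this gives $-\nabla f(x^\star)\in\partial\rho(x^\star)$.

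Finally, combine this lower bound with the upper bound on the objective gap from the proof of \cref{thm:strong-main}, that is, $D_\psi(x^\star,z_0)/\omega_1$ times the product in~\eqref{eq:strong-product}, again with $z_0=y_0$. Dividing by $\mu$ gives~\eqref{eq:strong-primal-distance}.

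The only step that needs care is the stationarity of the interior minimizer. It is routine, because $f$ is $C^1$ on the open set $\Omega$ and $\rho$ is convex with $x^\star\in\dom\rho$. Everything else is a direct substitution into the inequalities already proved.
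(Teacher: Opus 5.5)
Your proposal is correct and matches the paper's proof. You drop the nonnegative objective-gap term in \eqref{eq:strong-telescope} with $u=x^\star$ and $z_0=y_0$ for the first bound, then add the subgradient inequality for $\rho$ at the interior (hence stationary) minimizer to relative strong convexity at $x^\star$ to get $\Phi(y_N)-\Phi^\star\geq\mu D_\psi(y_N,x^\star)$, and combine with the product bound; your directional-derivative argument simply spells out the first-order optimality step that the paper cites without detail.
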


\begin{proof}
    In~\eqref{eq:strong-telescope}, the objective gap is nonnegative for a minimizer.
    Dropping it proves~\eqref{eq:strong-aux-distance}.

    For an interior minimizer, first-order optimality gives $-\nabla f(x^\star)\in\partial\rho(x^\star)$.
    Add the supporting inequality for $\rho$ to relative strong convexity at $x^\star$ to obtain
    \begin{equation*}
        \Phi(y)-\Phi^\star\geq\mu D_\psi(y,x^\star).
    \end{equation*}
    The product bound gives~\eqref{eq:strong-primal-distance}.
\end{proof}

\subsection{Finite termination}
\label{app:strong-finite}

\begin{proposition}[Finite strong-regime search]
    \label{prop:strong-finite}
    At a nonstationary previous iterate, the inner search of Algorithm~\ref{alg:strong-full} terminates finitely.
    With its previous-iterate stopping check, every outer iteration therefore accepts a trial or returns a global minimizer after finitely many BPG solves.
\end{proposition}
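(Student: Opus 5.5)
The plan follows the proof of Proposition~\ref{prop:convex-finite}, adapted to the weighted mirror update and the certificate~\eqref{eq:strong-certificate}. We argue by contradiction and suppose the inner search at a nonstationary previous iterate $(\omega_-,\theta_-,y_-,z_-)$ never accepts. The test $\kappa\le 2\mu\lambda$ only increases $\kappa$, and the stepsize never increases within the search. Proposition~\ref{prop:local-rs-backtracking} therefore allows only finitely many reductions of $\lambda$, even though $x$ is recomputed. Thus on a tail $\lambda=\bar\lambda$ is fixed and every rejection multiplies $\kappa$ by $\gamma_\kappa$. This gives $\kappa\to\infty$, so eventually $d=\kappa-2\mu\bar\lambda>0$ and the test $\kappa\le2\mu\lambda$ no longer triggers. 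Lemma~\ref{lem:coefficients} then gives $\alpha\to0$, hence $\omega\to\omega_-$ and $\theta\to\theta_-$.

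\emph{Case $\omega_->0$.} Here $x=(\omega_-y_-+\alpha z_-)/\omega\to y_-$. Let $\bar y\in T_{\bar\lambda}(y_-)$. Proposition~\ref{prop:bpg-continuity} gives $\widehat y\to\bar y$, so $g_\lambda(x)$ and $\nabla\psi(x)$ stay bounded. On the tail, trials pass~\eqref{eq:local-rs-test}, so Lemma~\ref{lem:bpg-comparison} and continuity give
\[
P_\lambda(x)\to P_{\bar\lambda}(y_-)\ge \bar\lambda^{-1}D_\psi(y_-,\bar y)>0,
\]
where strictness uses nonstationarity via~\eqref{eq:bpg-fixed-point}. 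The mirror argument is
\[
p=\nabla\psi(z_-)+\frac{\alpha}{\theta}\bigl(\mu\nabla\psi(x)-\mu\nabla\psi(z_-)-g_\lambda(x)\bigr),
\]
which has the form $\nabla\psi(z_-)+\alpha v_\alpha$ with $v_\alpha$ bounded. Lemma~\ref{lem:small-mirror} then gives $p\in\interior\dom\psi^{\ast}$ eventually, $\widehat z\to z_-\in\Omega$, and $D_\psi(z_-,\widehat z)\to0$. In~\eqref{eq:strong-certificate} the left side is at most $\theta D_\psi(z_-,\widehat z)\to0$, because the subtracted $\mu(\cdots)$ terms are nonnegative. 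The right side tends to $\omega_-P_{\bar\lambda}(y_-)>0$. So acceptance eventually occurs, a contradiction.

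\emph{Case $\omega_-=0$.} Here $y_-=z_-$, so $x=z_-$ and $\omega=\alpha$. With $\lambda=\bar\lambda$ fixed, the quantities $\widehat y$, $g_\lambda(x)$ and $P_\lambda(x)>0$ are fixed. Since $x=z_-$, the mirror argument is $p=\nabla\psi(z_-)-(\alpha/\theta)g_\lambda(x)$, and Lemma~\ref{lem:small-mirror} gives $D_\psi(z_-,\widehat z)=o(\alpha)$. The domain checks pass eventually. The certificate's left side is $\theta D_\psi(z_-,\widehat z)=o(\alpha)$, since $\theta\to\theta_-=1$ and the subtracted term vanishes because $D_\psi(z_-,x)=0$. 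The right side is $\alpha P_\lambda(x)$, which dominates $o(\alpha)$, so acceptance occurs: a contradiction.

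For the second assertion, we follow Corollary~\ref{cor:convex-finite-guard}. The previous-iterate check computes $v\in T_\lambda(y_-)$ and returns $y_-$ if $v=y_-$; by~\eqref{eq:bpg-fixed-point} this detects stationarity. We also need that an interior stationary point is a global minimizer. Relative strong convexity gives $D_f\ge\mu D_\psi\ge0$, so $f$ satisfies the convex supporting inequality; adding it to the supporting inequality for $\rho$ from stationarity shows minimality. Otherwise the first part applies. The main obstacle is the case $\omega_->0$: we must keep $P_\lambda(x)$ bounded away from zero while $\lambda$ is frozen and $x$ moves, and verify that $v_\alpha$ stays bounded. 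Both follow from $x\to y_-\in\Omega$ and the continuity of $\nabla\psi$ on $\Omega$.
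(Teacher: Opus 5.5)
Your proposal is correct and follows essentially the same route as the paper's proof: finitely many stepsize reductions, then $\kappa\to\infty$ and $\alpha\to0$ on the tail. It then splits into the case $\omega_->0$, where $P_\lambda(x)\to P_{\bar\lambda}(y_-)>0$ while the left side of~\eqref{eq:strong-certificate} is at most $\theta D_\psi(z_-,\widehat z)\to0$, and the case $\omega_-=0$, where Lemma~\ref{lem:small-mirror} gives $o(\alpha)$ against $\alpha P_\lambda(x)$, with the stopping check and relative strong convexity handling a stationary previous iterate. Your explicit rewriting of the mirror argument as $\nabla\psi(z_-)+\alpha v_\alpha$ with bounded $v_\alpha$ only makes precise the paper's remark that this argument converges to $\nabla\psi(z_-)$.
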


\begin{proof}
    The stopping check detects a stationary $y_-=y_{k-1}$ by~\eqref{eq:bpg-fixed-point}.
    Such a point is a global minimizer by the relative lower bound and convexity of $\rho$.
    Otherwise $y_-$ is nonstationary.

    There can be only finitely many reductions of $\lambda$.
    On an infinite remaining tail, $\lambda=\bar\lambda$ is fixed and $\kappa\to\infty$.
    The coefficient condition eventually holds, and $\alpha\to0$, $\omega\to \omega_-$, $\theta\to \theta_-$.

    For $\omega_->0$, let $\bar y\in T_{\bar\lambda}(y_-)$.
    Then $x\to y_-$, and the continuity and positivity arguments in Appendix~\ref{app:convex-finite} give $P_\lambda(x)\to P_{\bar\lambda}(y_-)>0$.
    The mirror argument converges to $\nabla\psi(z_-)$, so it is eventually in the conjugate domain and $\widehat z\to z_-$.
    Since the subtracted divergences in \eqref{eq:strong-certificate} are nonnegative,
    \begin{align*}
        &\theta D_\psi(z_-,\widehat z)
        -\mu\left(\omega_-D_\psi(y_-,x)+\alpha D_\psi(z_-,x)\right)
        \\
        &\quad\leq \theta D_\psi(z_-,\widehat z)\to0,
    \end{align*}
    whereas $\omega P_\lambda(x)\to \omega_-P_{\bar\lambda}(y_-)>0$.
    Acceptance follows eventually.

    For $\omega_-=0$, initialization gives $x=y_-=z_-$ and $\theta_-=1$.
    The mirror argument is $\nabla\psi(z_-)-(\alpha/\theta)g_\lambda(x)$.
    Lemma~\ref{lem:small-mirror} with $\phi=\psi$ yields
    \begin{align*}
        &\theta D_\psi(z_-,\widehat z)
        -\mu\left(\omega_-D_\psi(y_-,x)+\alpha D_\psi(z_-,x)\right)
        \\
        &\quad=\theta D_\psi(z_-,\widehat z)=o(\alpha),
    \end{align*}
    while $\omega P_\lambda(x)=\alpha P_\lambda(x)>0$.
    The domain and acceptance tests again hold eventually.
    These contradictions prove finite termination.
\end{proof}

\subsection{Convergence under sublevel conditions}
\label{app:sublevel-results-strong}
\begin{corollary}[Linear rate for Burg geometry]
    \label{cor:burg-strong-complexity}
    Suppose that \cref{ass:standing,assumption:subproblem,ass:strong} hold, with $\psi(x)=-\sum_j\log x_j$ on $\Omega=\R_{++}^n$.
    Then $\Phi$ has a unique minimizer in $\Omega$, and GA-BPGsc satisfies $\sup_k\kappa_k<\infty$ and $\Phi(y_k)-\Phi^\star=\O(q^k)$ for some $q\in(0,1)$.
    The total number of BPG solves through $k$ completed iterations, including rejected trials and stopping checks, is $\O(k)$.
\end{corollary}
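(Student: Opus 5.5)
I would follow the template of Corollary~\ref{cor:burg-convex-complexity}. The additional work is to derive existence of an interior minimizer and compactness of the sublevel sets from relative strong convexity with respect to Burg, rather than assuming them.

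\textbf{Step 1: uniqueness, existence, and coercivity.} Relative strong convexity says that $f-\mu\psi$ is convex on $\Omega$. Hence $\Phi=\mu\psi+(f-\mu\psi)+\rho$ is strictly convex on $\Omega\cap\dom\rho$, which gives uniqueness. Fix $v\in\Omega\cap\dom\rho$ and a subgradient $s\in\partial\rho(v)$ (I would use a point of the relative interior of $\dom\rho$ if needed). Then, for every $x\in\Omega$,
\[\Phi(x)\ge \Phi(v)+\langle \nabla f(v)+s,x-v\rangle+\mu D_\psi(x,v).\]
The Burg divergence is $D_\psi(x,v)=\sum_j\big(x_j/v_j-\log(x_j/v_j)-1\big)$. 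It grows linearly as $x_j\to\infty$ and tends to $+\infty$ as $x_j\downarrow0$. The linear term cannot dominate: as $x_j\to\infty$ one can take $\mu$-slack because $x_j/v_j$ carries coefficient $\mu$, but the linear coefficient may be large. I would therefore handle this point more carefully. Relative strong convexity with $u\in\cl\Omega$ gives $D_f(u,v)\ge\mu D_\psi(u,v)$, and the Burg divergence is $+\infty$ whenever $u$ lies on the boundary, so $f\equiv+\infty$ on $\partial\Omega$. Together with the lower bound $\Phi^\star>-\infty$, this shows that any minimizing sequence stays away from the boundary.

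For boundedness, I would combine lower boundedness with strong convexity along rays. Restricted to a ray $t\mapsto v+td$, the function $\Phi$ dominates $a+bt+\mu\sum_j(-\log(1+td_j/v_j))+\mu t\sum_j d_j/v_j$. If instead $\Phi$ had an unbounded sublevel set, it would have a recession direction $d\ge0$, $d\ne0$, along which $\Phi$ is nonincreasing. Strong convexity of the restriction (its second derivative is at least $\mu\sum_j d_j^2/(v_j+td_j)^2>0$), combined with lower boundedness, makes the restriction eventually increasing. This is a contradiction, so the sublevel sets are bounded and a minimizer exists. It lies in $\Omega$ because $\Phi=+\infty$ on $\partial\Omega$.

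\textbf{Step 2: compactness of the iterates.} With $u=x^\star\in\Omega$, \eqref{eq:strong-telescope} gives $\Phi(y_k)\le\Phi(y_0)$ and $D_\psi(x^\star,z_k)\le D_\psi(x^\star,y_0)$. The Burg divergence in its second argument is coercive toward both $0$ and $\infty$, so $\{z_k\}$ lies in a compact subset of $\Omega$. The sublevel set, with boundary values $+\infty$ and closed in $\Omega$ by Step 1, keeps $\{y_k\}$ in a compact subset of $\Omega$. The coupling points $x$ are convex combinations of these, so they also stay in a compact set. For the trial points $\widehat y$, I would use \eqref{eq:burg-bpg-comparison}, i.e.\ $D_\psi(x^\star,\widehat y)\le D_\psi(x^\star,x)$, to place them in a compact set as well.

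\textbf{Step 3: uniform bound on $\kappa$.} On these compact sets, with $\lambda\ge\underline\lambda$ from Proposition~\ref{prop:accepted-lambda-lower}, I would first show $\lambda\|g\|^2=\O(P_\lambda(x))$. This follows because $P_\lambda(x)\ge\lambda^{-1}D_\psi(x,\widehat y)$ and $\nabla^2\psi$ is bounded above and below there. The mirror argument is $\nabla\psi(z)+(\alpha/\theta)(\mu\nabla\psi(x)-\mu\nabla\psi(z)-g)$, and its displacement is $\O(\alpha/\theta)$. This displacement gives $\theta D_\psi(z,\widehat z)=\O(\alpha^2(\mu^2\|\nabla\psi(x)-\nabla\psi(z)\|^2+\|g\|^2)/\theta)$.

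The $\|g\|^2$ part is at most $C\omega P_\lambda/\kappa$, using $\kappa\alpha^2=2\lambda\omega\theta$. The part involving $\|\nabla\psi(x)-\nabla\psi(z)\|^2\asymp D_\psi(z,x)$ has size $\alpha^2\mu^2 D_\psi(z,x)/\theta=\alpha\mu D_\psi(z,x)\cdot\O(\mu\alpha/\theta)$. Since $\mu\alpha/\theta\le1$, this is only bounded by $\mu\alpha D_\psi(z,x)$ times a constant, which is not small. This is the main obstacle. I expect to resolve it with the identity $\mu\alpha/\theta\approx\sqrt{2\mu\lambda/\kappa}\to0$ as $\kappa$ grows, which makes this term absorbable by the subtracted $\mu\alpha D_\psi(z,x)$ in \eqref{eq:strong-certificate}.

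Consequently, every trial that passes \eqref{eq:local-rs-test} with $\kappa$ above a fixed threshold also passes both the domain test (trivially, since $\nabla\psi^\ast$ is defined on $-\R^n_{++}$ and the argument stays negative for small displacements) and the criterion. Hence $\sup_k\kappa_k<\infty$. Theorem~\ref{thm:strong-main} then gives the rate with $q=1-\sqrt{2\mu\underline\lambda/\sup_k\kappa_k}$. The count of $\O(k)$ BPG solves follows by the logarithmic counting used in Corollary~\ref{cor:burg-convex-complexity}.
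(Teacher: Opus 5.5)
Your Steps 2 and 3 follow the paper's proof closely. Compactness of $\{y_k\}$ and $\{z_k\}$ comes from \eqref{eq:strong-telescope}, of the coupling points by convexity, and of upper-bound-passing $\widehat y$ from \eqref{eq:burg-bpg-comparison}; local quadratic Burg estimates follow. The obstacle you flag is resolved exactly as you anticipate: $\mu\alpha/\theta\le\sqrt{2\mu\lambda/\kappa}$ (from $\theta\ge\mu\omega$), so $(\mu\alpha/\theta)\,\mu\alpha D_\psi(z,x)$ is absorbed by the subtracted term in \eqref{eq:strong-certificate}.

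What you leave out is a uniform \emph{upper} bound on trial stepsizes, which the paper supplies explicitly. At a nonterminal trial passing \eqref{eq:local-rs-test}, $\mu D_\psi(\widehat y,x)\le D_f(\widehat y,x)\le\lambda^{-1}D_\psi(\widehat y,x)$ with $D_\psi(\widehat y,x)>0$ gives $\mu\lambda\le1$. Hence every trial has $\lambda\le\gamma_+\max\{\lambda_0,1/\mu\}$. Without this bound, three things fail, and all three are needed for $\sup_k\kappa_k<\infty$:
\begin{itemize}
\item the convergence $\sqrt{2\mu\lambda/\kappa}\to0$ is not uniform over trials;
\item the $\O(\kappa^{-1/2})$ mirror displacement is not uniform over trials;
\item you cannot show that the coefficient check $\kappa>2\mu\lambda$ stops firing above a fixed threshold. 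This is a third $\kappa$-dependent rejection that your Step 3 does not mention.
\end{itemize}

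Your Step 1 takes a genuinely different route. The paper uses a single midpoint inequality. Convexity of $\Phi-\mu\psi$ on $\Omega$, together with $\Phi\ge\Phi^\star$ at the midpoint, gives $\sum_j\log\frac{(u_j+v_j)^2}{4u_jv_j}\le 2(r-\Phi^\star)/\mu$ for all $u,v\in\Omega$ in the $r$-sublevel set. Fixing $u$ confines each $v_j$ to a compact subinterval of $(0,\infty)$, so sublevel sets are compact in $\Omega$ at once, with no recession-cone theory.

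Your recession-direction argument can be made to work, but the reason you give is not sufficient. A positive second derivative plus lower boundedness does not force a function to increase eventually: $e^{-t}$ is a counterexample. Moreover, second derivatives need not exist, since $f$ is only $C^1$ and $\rho$ is nonsmooth. What actually works is the logarithmic growth of the Burg barrier. The function $c(t)=\Phi(v+td)+\mu\sum_j\log(v_j+td_j)$ is convex on the ray. If $\Phi$ were nonincreasing and bounded below along $d\ge0$, $d\ne0$, then $c$ would tend to $+\infty$ while growing only like $\log t$, which no convex function can do. You should also apply the recession theorem to $\Phi+\delta_{\cl\Omega}$ rather than to $\Phi$, since nothing makes $f$ convex outside $\cl\Omega$.

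With these repairs your route is valid. The paper's midpoint argument is shorter and handles the boundary and infinity simultaneously. Yours makes it more explicit that the $\log$ growth of $\psi$ is what rules out flat directions at infinity.
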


\begin{proof}
    Assumption~\ref{ass:strong} makes $f-\mu\psi$ convex on $\Omega$ and excludes finite objective values on $\partial\Omega$.
    For any $r\in\R$ and $u,v\in\Omega$ with $\Phi(u),\Phi(v)\leq r$, the midpoint inequality gives
    \begin{equation*}
        \sum_j\log\frac{(u_j+v_j)^2}{4u_jv_j} \leq\frac{2(r-\Phi^\star)}{\mu}.
    \end{equation*}
    Fixing $u$ in any nonempty sublevel, each summand is nonnegative and diverges as $v_j\downarrow0$ or $v_j\to\infty$.
    Thus every objective sublevel is compact and contained in $\Omega$.
    Lower semicontinuity and strict convexity therefore yield a unique minimizer $u\in\Omega$.

    Equation~\eqref{eq:strong-telescope} gives $\Phi(y_k)\leq\Phi(y_0)$ and $D_\psi(u,z_k)\leq D_\psi(u,y_0)$.
    Since the sublevels of $D_\psi(u,\cdot)$ are compact in $\Omega$, both $\{y_k\}$ and $\{z_k\}$ lie in fixed compact subsets of $\Omega$.
    Using the trial notation of Algorithm~\ref{alg:strong-full}, coupling and~\eqref{eq:burg-bpg-comparison} therefore place all upper-bound-passing trial points $x,z,\widehat y$ in a fixed compact subset of $\Omega$, including those subsequently rejected by the mirror criterion.

    For a nonterminal trial satisfying~\eqref{eq:local-rs-test}, relative strong convexity gives $\mu\lambda\leq1$.
    Since $\theta=1+\mu\omega$ and $\kappa\alpha^2=2\lambda\omega\theta$, we have $\mu\alpha/\theta\leq\sqrt{2\mu\lambda/\kappa} \leq\sqrt{2/\kappa}$.
    The stepsize-reduction argument in Proposition~\ref{prop:accepted-lambda-lower} also bounds all trial stepsizes away from zero, so $g_\lambda(x)$ is uniformly bounded.
    The mirror argument is
    \begin{equation*}
        p=\nabla\psi(z)-\frac{\alpha}{\theta} \left(g_\lambda(x)+\mu(\nabla\psi(z)-\nabla\psi(x))\right).
    \end{equation*}
    Hence $p-\nabla\psi(z)=\O(\kappa^{-1/2})$ uniformly over trials.
    Because $\nabla\psi(z)$ lies in a fixed compact subset of $\interior\dom\psi^{\ast}$, the conjugate-domain check passes for all sufficiently large $\kappa$, and $\widehat z$ remains in a fixed compact subset of $\Omega$.

    On these compact sets, the Burg formula gives $\lambda\|g_\lambda(x)\|^2=\O(P_\lambda(x))$ and $\|\nabla\psi(z)-\nabla\psi(x)\|^2 =\O(D_\psi(z,x))$, uniformly over trials.
    The logarithmic remainder estimate for the mirror update, together with the coefficient equation, consequently yields
    \begin{equation*}
        \begin{aligned}
            \theta D_\psi(z,\widehat z)
            &=\O\left(
            \frac{\alpha^2}{\theta}\|g_\lambda(x)\|^2
            +\frac{\mu^2\alpha^2}{\theta}D_\psi(z,x)
            \right)\\
            &=\O\left(\frac{\omega P_\lambda(x)}{\kappa}\right)
            +\O\left(\frac{\mu\alpha}{\theta}\right)
            \,\mu\alpha D_\psi(z,x).
        \end{aligned}
    \end{equation*}
    Since $1/\kappa$ and $\mu\alpha/\theta$ both tend to zero uniformly, every upper-bound-passing trial with sufficiently large $\kappa$ satisfies $\theta D_\psi(z,\widehat z) \leq\omega P_\lambda(x)+\mu\alpha D_\psi(z,x)$.
    This implies~\eqref{eq:strong-certificate}, because the remaining term $-\mu\omega_-D_\psi(y,x)$ is nonpositive.

    The coefficient check also passes above a uniform threshold: accepted nonterminal stepsizes satisfy $\mu\lambda_k\leq1$, so every trial has $\lambda\leq\gamma_+\max\{\lambda_0,1/\mu\}$.
    Thus no $\kappa$-dependent rejection occurs above a fixed threshold independent of the iteration.
    Since the warm start never increases $\kappa$ and each backtracking increase multiplies it by $\gamma_\kappa$, the accepted values are bounded by the larger of $\kappa_0$ and $\gamma_\kappa$ times this threshold.
    Hence $\sup_k\kappa_k<\infty$.

    Proposition~\ref{prop:accepted-lambda-lower} and Theorem~\ref{thm:strong-main} now give the stated linear rate.
    Taking logarithms of the multiplicative parameter updates, using the stepsize lower bound and the bound on $\kappa_k$, shows that the total number of rejections is $\O(k)$.
    Each rejection requires at most one BPG solve; adding the accepted trials and previous-iterate stopping checks proves the BPG-solve bound.
\end{proof}

\subsection{Euclidean specialization}
\label{app:strong-euclidean}

\begin{corollary}[Euclidean linear rate with fixed $\kappa$]
    \label{cor:strong-euclidean}
    Suppose that \cref{ass:standing,assumption:subproblem,ass:strong} hold, $\psi=\|\cdot\|^2/2$, and $\Omega=\R^n$.
    Run GA-BPGsc with $\gamma_+=1$, $\kappa_0\geq8$, and $\kappa_0>2\mu\lambda_0$.
    Set
    \begin{equation*}
        \underline\lambda=\min\{\lambda_0,1/(\gamma_-L)\}, \qquad q_{\min}=\sqrt{\frac{2\mu\underline\lambda}{\kappa_0}}.
    \end{equation*}
    Then $\lambda_k\geq\underline\lambda$ and $\kappa_k=\kappa_0$ at every completed iteration.
    For any minimizer $x^\star$ and $N\geq1$,
    \begin{equation*}
        \Phi(y_N)-\Phi^\star \leq \frac{\|x^\star-y_0\|^2}{2\omega_1} (1-q_{\min})^{N-1}.
    \end{equation*}
\end{corollary}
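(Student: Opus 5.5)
The plan is to mirror the proof of Corollary~\ref{cor:convex-euclidean}. We show that with $\kappa=\kappa_0$, every trial that passes the upper-bound test~\eqref{eq:local-rs-test} also passes the coefficient check and the acceptance test~\eqref{eq:strong-certificate}. Then no $\kappa$-dependent rejection ever occurs. Since $\gamma_+=1$, the warm start is $(\lambda_{k-1},\kappa_{k-1})$, so $\kappa_k=\kappa_0$ for all $k$ and $\lambda$ never increases. Proposition~\ref{prop:accepted-lambda-lower} then gives $\lambda_k\ge\underline\lambda$, hence $\sqrt{2\mu\lambda_k/\kappa_k}\ge q_{\min}$. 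Theorem~\ref{thm:strong-main} with $u=x^\star$ and $D_\psi(x^\star,z_0)=\|x^\star-y_0\|^2/2$ gives the stated bound with factor $(1-q_{\min})^{N-1}$. The conjugate-domain and $\Omega$-membership checks are vacuous because $\Omega=\R^n$.

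\textbf{Coefficient check.} Every trial stepsize satisfies $\lambda\le\lambda_0$: trials start from the previous accepted value and are only decreased. Together with $\kappa_0>2\mu\lambda_0$, this gives $\kappa_0>2\mu\lambda$. I will also record that any nonterminal trial passing~\eqref{eq:local-rs-test} has $\mu\lambda\le1$. This follows from
\begin{equation*}
\mu D_\psi(\widehat y,x)\le D_f(\widehat y,x)\le\lambda^{-1}D_\psi(\widehat y,x),
\end{equation*}
using $\widehat y\ne x$.

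\textbf{Acceptance test.} This is the main step. Write $g=g_\lambda(x)$. In Euclidean geometry the mirror update gives
\begin{equation*}
\widehat z-z_-=\frac{\alpha}{\theta}\bigl(\mu(x-z_-)-g\bigr),
\end{equation*}
so
\begin{equation*}
\theta D_\psi(z_-,\widehat z)=\frac{\alpha^2}{2\theta}\|\mu(x-z_-)-g\|^2\le\frac{\alpha^2}{\theta}\|g\|^2+\frac{\mu^2\alpha^2}{\theta}\|x-z_-\|^2 .
\end{equation*}
For the first term, the coefficient equation gives $\alpha^2/\theta=2\lambda\omega/\kappa_0$. Lemma~\ref{lem:bpg-comparison} gives $P_\lambda(x)\ge\lambda^{-1}D_\psi(x,\widehat y)=\lambda\|g\|^2/2$. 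Hence the first term is at most $(4/\kappa_0)\,\omega P_\lambda(x)\le\omega P_\lambda(x)$.

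For the second term, I need $\mu\alpha/\theta\le 1/2$; it is then at most $\mu\alpha\,\tfrac12\|x-z_-\|^2=\mu\alpha D_\psi(z_-,x)$. From $\alpha^2=2\lambda\omega\theta/\kappa_0$ and $\mu\omega\le\theta$ we get $(\mu\alpha/\theta)^2\le 2\mu\lambda/\kappa_0\le 2/\kappa_0\le1/4$, using $\mu\lambda\le1$ and $\kappa_0\ge8$. This is exactly where $\kappa_0\ge8$ enters.

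Adding the two estimates and dropping $-\mu\omega_-D_\psi(y_-,x)\le0$ yields~\eqref{eq:strong-certificate}. The terminal case $\widehat y=x$ is handled by the stopping convention.

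\textbf{Expected obstacle.} The delicate point is the $\mu\lambda\le1$ bound. It requires that every trial either passes~\eqref{eq:local-rs-test} nontrivially or terminates. The rest is bookkeeping: every increase of $\lambda$ is excluded by $\gamma_+=1$, and Proposition~\ref{prop:accepted-lambda-lower} applies verbatim.
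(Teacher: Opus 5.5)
Your proof is correct. Its outer structure matches the paper's: with $\kappa=\kappa_0$ no $\kappa$-dependent rejection occurs, $\gamma_+=1$ then freezes $\kappa_k=\kappa_0$, and Proposition~\ref{prop:accepted-lambda-lower} and Theorem~\ref{thm:strong-main} finish the argument. The key acceptance step, however, is done differently.

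\textbf{The paper's route.} The paper keeps the term $-\mu\omega_-D_\psi(y_-,x)$. Writing $d=z_--x$ and $r=\alpha/\omega_-$, the coupling gives $x-y_-=rd$. The left side of \eqref{eq:strong-certificate} divided by $\omega$ then becomes the concave quadratic $\frac{\lambda}{\kappa}\|g\|^2+\frac{2\mu\lambda}{\kappa}\langle g,d\rangle-\frac{\mu}{2}\bigl(r-\frac{2\mu\lambda}{\kappa}\bigr)\|d\|^2$ in $d$. From $\alpha/\omega\ge s=\sqrt{2\mu\lambda/\kappa}$ it gets $\frac{2\mu\lambda}{\kappa r}\le s(1-s)\le\frac14$. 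Maximizing over $d$ bounds the quadratic by $\frac{4\lambda}{3\kappa}\|g\|^2$. The case $\omega_-=0$ is treated separately.

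\textbf{Your route.} You discard that coupling term and split $\|\mu(x-z_-)-g\|^2\le2\|g\|^2+2\mu^2\|x-z_-\|^2$. You then absorb the $d$-part into $\mu\alpha D_\psi(z_-,x)$ using $\mu\alpha/\theta\le\frac12$. That bound rests on the extra fact $\mu\lambda\le1$, obtained from relative strong convexity plus the upper-bound test at a nonterminal trial. This is the same device the paper uses in its Burg-geometry corollary.

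\textbf{Comparison.} Your argument is shorter and treats $\omega_-=0$ and $\omega_->0$ uniformly. In it, $\kappa_0\ge8$ is exactly what makes $2\mu\lambda/\kappa_0\le\frac14$. The paper's argument needs only $s\in(0,1)$ from the coefficient condition, not $\mu\lambda\le1$. It also yields a sharper constant: its algebra already gives acceptance for $\kappa\ge 8/3$, so the stated requirement $\kappa_0\ge8$ has slack there.
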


\begin{proof}
    The stepsize lower bound has already been proved.
    For a trial with $\omega_->0$, put
    \begin{equation*}
        d=z_--x, \qquad r=\alpha/\omega_-.
    \end{equation*}
    The coefficient condition gives $\sqrt{2\mu\lambda/\kappa}\in(0,1)$.
    The coupling and mirror equations give $x-y_-=rd$ and $\theta(z_--\widehat z)=\alpha(g_\lambda(x)+\mu d)$.
    Expand the left-hand side of~\eqref{eq:strong-certificate} and use $\alpha^2=2\lambda \omega \theta/\kappa$:
    \begin{equation}
        \begin{aligned}
            &\frac{\theta D_\psi(z_-,\widehat z)
                -\mu\left(\omega_-D_\psi(y_-,x)+\alpha D_\psi(z_-,x)\right)}{\omega}
            \\
            &\quad=
            \frac{\lambda}{\kappa}\|g_\lambda(x)\|^2
            +\frac{2\mu\lambda}{\kappa}\langle g_\lambda(x),d\rangle
            -\frac\mu2\left(r-\frac{2\mu\lambda}{\kappa}\right)\|d\|^2.
        \end{aligned}
        \label{eq:strong-euclidean-expansion}
    \end{equation}
    Since $\theta\geq\mu \omega$, $\alpha/\omega\geq\sqrt{2\mu\lambda/\kappa}$.
    As $\alpha/\omega=r/(1+r)$, this implies
    \begin{equation*}
        r\geq\frac{\sqrt{2\mu\lambda/\kappa}}{1-\sqrt{2\mu\lambda/\kappa}},
        \qquad
        \frac{2\mu\lambda}{\kappa r}
        \leq\sqrt{\frac{2\mu\lambda}{\kappa}}
        \left(1-\sqrt{\frac{2\mu\lambda}{\kappa}}\right)
        \leq\frac14.
    \end{equation*}
    Completing the square in $d$ in \eqref{eq:strong-euclidean-expansion} gives
    \begin{align*}
        &\frac{\theta D_\psi(z_-,\widehat z)
            -\mu\left(\omega_-D_\psi(y_-,x)+\alpha D_\psi(z_-,x)\right)}{\omega}
        \\
        &\quad\leq
        \frac{\lambda}{\kappa}
        \frac r{r-2\mu\lambda/\kappa}\|g_\lambda(x)\|^2
        \leq
        \frac{4\lambda}{3\kappa}\|g_\lambda(x)\|^2.
    \end{align*}
    The upper-bound test gives $P_\lambda(x)\geq\lambda\|g_\lambda(x)\|^2/2$.
    Thus
    \begin{equation*}
        \theta D_\psi(z_-,\widehat z) -\mu\left(\omega_-D_\psi(y_-,x)+\alpha D_\psi(z_-,x)\right) \leq \omega P_\lambda(x)
    \end{equation*}
    whenever $\kappa\geq8$.

    For $\omega_-=0$, $x=y_-=z_-$ and
    \begin{align*}
        &\theta D_\psi(z_-,\widehat z)
        -\mu\left(\omega_-D_\psi(y_-,x)+\alpha D_\psi(z_-,x)\right)
        \\
        &\quad=
        \frac{\alpha^2}{2\theta}\|g_\lambda(x)\|^2
        =
        \frac{\lambda \omega}{\kappa}\|g_\lambda(x)\|^2,
    \end{align*}
    so $\kappa\geq2$ suffices.
    The mirror-domain tests are vacuous in Euclidean geometry.

    Since $\gamma_+=1$, every trial satisfies $\lambda\leq\lambda_0$.
    The condition $\kappa_0>2\mu\lambda_0$ prevents coefficient rejection, and $\kappa_0\geq8$ ensures acceptance after the upper-bound test.
    Hence no $\kappa$ increase occurs and $\kappa_k=\kappa_0$.
    Proposition~\ref{prop:accepted-lambda-lower} gives $\lambda_k\geq\underline\lambda$.
    Substitution in Theorem~\ref{thm:strong-main} proves the rate.
\end{proof}

For example, fixed $\lambda=1/L$ and $\kappa=8$ give contraction factor $1-\tfrac12\sqrt{\mu/L}$.
As in the convex Euclidean case, the total number of parameter rejections is bounded under the prescribed warm starts; including the stopping check still gives $\O(N)$ BPG solves through $N$ outer iterations.

\section{Convergence analysis for nonconvex objectives}
\label{app:nonconvex}

We impose the common assumptions, including $\Phi^\star>-\infty$.
The analysis below uses no relative lower bound or DC decomposition and does not assume $\Omega=\R^n$.
Unlike in GA-BPGc and GA-BPGsc, the variable $z_k$ stores the trial point for the next iteration.
The fixed auxiliary geometry $\phi$ is used only to construct that point, not in the stationarity bound.

\subsection{Algorithm and well-definedness}
\label{app:nonconvex-algorithm}

\begin{algorithm}[!t]
    \caption{GA-BPGnc}
    \label{alg:nonconvex-full}
    \SetKwFunction{BT}{Backtracking}

    \KwIn{
        A Legendre function $\phi$;
        $y_0\in\Omega\cap\dom \rho$;
        $\lambda_{\max}\geq\lambda_0>0$;
        $\gamma_+\geq1$, $\gamma_->1$;
        $\sigma\in(0,1)$;
        a rule for choosing weights $\beta_j\geq0$.
    }

    $z_0\gets y_0$\;

    \For{$k=1,2,\ldots$}{
    \eIf{$k=1$}{
        $\lambda\gets\lambda_0$\;
    }{
        $\lambda\gets
            \min\{\gamma_+\lambda_{k-1},\lambda_{\max}\}$\;
    }

    Choose $\beta_{k+1}\geq0$ by the prescribed rule\;
    $(x_k,\widehat y_k,y_k,z_k,\lambda_k,\delta_k)\gets$
    \BT{$y_{k-1},z_{k-1},\lambda,\beta_{k+1}$}\;
    }

    \Procedure{\BT{$y,z,\lambda,\beta$}}{
        $x\gets z$
        \Comment*{Coupling}

        $\widehat y\in T_\lambda(x)$
        \Comment*{BPG update}

        \If{$D_f(\widehat y,x)>D_\psi(\widehat y,x)/\lambda$}{
            \Return{\BT{$y,z,\lambda/\gamma_-,\beta$}}\;
        }

        $\delta\gets\Phi(y)-\Phi(\widehat y)$\;

        \eIf{$\delta>0$}{
            $y_+\gets\widehat y$\;
        }{
            $y_+\gets y$\;
        }

        $z_+\gets y_+$\;

        \If{$\delta<\frac{\sigma\lambda}{2}\mathcal R_\lambda(x)$}{
            \Return{$(x,\widehat y,y_+,z_+,\lambda,\delta)$}\;
        }

        \If{$\{y_+,y\}\not\subset
                \interior\dom \phi$}{
            \Return{$(x,\widehat y,y_+,z_+,\lambda,\delta)$}\;
        }

        $p\gets
            \nabla\phi(y_+)
            +\beta[
                \nabla\phi(y_+)
                -\nabla\phi(y)
            ]$\;

        \If{$p\notin
                \interior\dom \phi^{\ast}$}{
            \Return{$(x,\widehat y,y_+,z_+,\lambda,\delta)$}\;
        }

        $\widehat z\gets\nabla\phi^{\ast}(p)$
        \Comment*{Mirror update}

        \If(\Comment*[f]{Criterion}){
            $\widehat z\in\Omega\cap\dom \rho$
        }{
            $z_+\gets\widehat z$\;
        }

        \Return{$(x,\widehat y,y_+,z_+,\lambda,\delta)$}\;
    }
\end{algorithm}

Backtracking holds $x_k$ fixed.
Every early return in the criterion completes the outer iteration with $z_k=y_k$.
Thus neither an invalid gradient nor an invalid inverse mirror map is evaluated.
Each weight $\beta_{k+1}$ is chosen from the history available before backtracking and held fixed throughout its trials.

\begin{proposition}[Well-defined outer iterations]
    \label{prop:nonconvex-well-defined}
    Every nonterminal outer iteration is completed after finitely many BPG trials.
    Its variables satisfy $x_k,\widehat y_k,y_k,z_k\in \Omega\cap\dom \rho$, and $\mathcal R_{\lambda_k}(x_k)>0$.
\end{proposition}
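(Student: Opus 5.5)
We argue by induction on $k$, with the invariant that the incoming pair satisfies $y_{k-1},z_{k-1}\in\Omega\cap\dom\rho$. The base case holds because $z_0=y_0\in\Omega\cap\dom\rho$ by the input assumption. For the inductive step, fix a nonterminal outer iteration $k$. The coupling sets $x=z_{k-1}$, and this point stays fixed throughout the backtracking, since the recursive calls only change $\lambda$. Hence $x_k=z_{k-1}\in\Omega\cap\dom\rho$, and $\nabla f(x)$ and $\nabla\psi(x)$ are well defined.

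By \cref{assumption:subproblem} and \cref{prop:bpg-unique}, each trial has a unique solution $\widehat y\in T_\lambda(x)$, and this solution lies in $\Omega\cap\dom\rho$. Finiteness of the trials then comes from \cref{prop:local-rs-backtracking}. Because the base point is fixed, the geometric reductions $\lambda\mapsto\lambda/\gamma_-$ reach $\lambda\le 1/L$ after finitely many steps, and at that point the upper-bound test~\eqref{eq:local-rs-test} holds. After this test passes, the procedure performs no further BPG solves: every remaining branch either returns or evaluates $\nabla\phi^{\ast}$ once and returns. Therefore the outer iteration finishes after finitely many BPG trials. This also gives $\widehat y_k\in\Omega\cap\dom\rho$.

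Next we check membership of $y_k$ and $z_k$. The point $y_k$ is either $\widehat y_k$ or $y_{k-1}$, and both lie in $\Omega\cap\dom\rho$. By the inductive hypothesis this makes $\delta=\Phi(y_{k-1})-\Phi(\widehat y)$ a finite real number, since $\dom\psi\subset\dom f$ makes $f$ finite on $\Omega$. For $z_k$, every return path either sets $z_k=y_k$ or sets $z_k=\widehat z$. The second option occurs only after checking $\widehat z\in\Omega\cap\dom\rho$. Before $\nabla\phi$ and $\nabla\phi^{\ast}$ are evaluated, the algorithm checks $\{y_+,y\}\subset\interior\dom\phi$ and $p\in\interior\dom\phi^{\ast}$, so both maps are applied only where they are defined. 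This closes the induction.

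Finally, $\mathcal R_{\lambda_k}(x_k)$ is well defined because $x_k\in\Omega\cap\dom\rho$. By \cref{prop:bpg-model-gap}, it is nonnegative, and it equals $0$ exactly when $\widehat y_k=x_k$. That equality is the termination case, in which the algorithm returns $x$ as a stationary point by~\eqref{eq:bpg-fixed-point}. The same argument applies to every intermediate trial that returns $x$, so termination is detected at any trial. Hence on a nonterminal iteration $\widehat y_k\neq x_k$, and therefore $\mathcal R_{\lambda_k}(x_k)>0$.

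The main obstacle is the bookkeeping of the stopping convention. We must make sure that a trial with $\widehat y=x$ at a larger $\lambda$ is treated as terminal, rather than being overwritten by the final accepted trial. Apart from this point, each step follows directly from the earlier propositions.
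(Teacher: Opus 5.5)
Your proof is correct and follows the same induction as the paper. In both, $x_k=z_{k-1}$ stays fixed and interior during backtracking; \cref{prop:bpg-unique} and \cref{prop:local-rs-backtracking} give interior BPG solutions and finite backtracking; $y_k$ and $z_k$ inherit feasibility from the acceptance rule and the explicit test on $\widehat z$; and \cref{prop:bpg-model-gap} gives $\mathcal R_{\lambda_k}(x_k)>0$.

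The stopping-convention issue you flag as the main obstacle is vacuous. By~\eqref{eq:bpg-fixed-point}, whether $x\in T_\lambda(x)$ holds does not depend on $\lambda$, so if one trial at the fixed base point gives $\widehat y=x$, every trial there does. Such a trial also passes the upper-bound test trivially.
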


\begin{proof}
    Initially $z_0=y_0\in\Omega\cap\dom \rho$.
    Inductively, $x_k=z_{k-1}$ is feasible and interior.
    Solvability places every BPG solution in the same set, and Proposition~\ref{prop:local-rs-backtracking} gives finite backtracking.
    If the stopping test fails, Proposition~\ref{prop:bpg-model-gap} gives $\mathcal R_{\lambda_k}(x_k)>0$.
    The point $y_k$ is either $\widehat y_k$ or $y_{k-1}$.
    The default $z_k=y_k$ is replaced only by a candidate that passes the explicit feasibility test, closing the induction.
\end{proof}

\subsection{Descent and reset}
\label{app:nonconvex-descent}

\begin{proof}[Proof of~\cref{lem:nonconvex-descent}]
    The acceptance rule immediately gives~\eqref{eq:nonconvex-descent}.
    For $x_k=y_{k-1}$, inequality~\eqref{eq:gap-descent} gives $\delta_k\geq\frac{\lambda_k}{2}\mathcal R_{\lambda_k}(x_k)>0$, hence $\tau_k\geq1$.
    If $\tau_k<\sigma$, then $\delta_k<\frac{\sigma\lambda_k}{2}\mathcal R_{\lambda_k}(x_k)$, so the algorithm keeps $z_k=y_k$.
    Consequently $x_{k+1}=y_k$, and the preceding argument applies to the next iteration.
    An invalid extrapolation also gives a safe next iteration, although this additional case is not needed for the bound.
\end{proof}

\subsection{Stationarity bound in terms of objective decreases and stepsizes}
\label{app:nonconvex-posterior}

\begin{proof}[Proof of the first inequality
        in~\eqref{eq:nonconvex-posterior}] The first iteration is safe because $x_1=z_0=y_0$.
    If it is nonterminal, Lemma~\ref{lem:nonconvex-descent} gives $\tau_1\geq1$, so $\mathcal A_N$ is nonempty.
    Telescope~\eqref{eq:nonconvex-descent}:
    \begin{equation*}
        \Phi(y_0)-\Phi(y_N) = \frac12 \sum_{k\in\mathcal A_N} \tau_k\lambda_k \mathcal R_{\lambda_k}(x_k).
    \end{equation*}
    All weights in this sum are positive.
    Since $y_N\in \cl\Omega$, its left side is at most $\Phi(y_0)-\Phi^\star$.
    Taking the minimum residual outside the sum and dividing by the sum of weights proves the first bound in~\eqref{eq:nonconvex-posterior}.
\end{proof}

\subsection{Worst-case bound}
\label{app:nonconvex-worstcase}

\begin{lemma}[Two-iteration inequality]
    \label{lem:nonconvex-pair}
    If both iterations $k$ and $k+1$ are nonterminal, then
    \begin{equation*}
        [\tau_k]_++[\tau_{k+1}]_+\geq\sigma.
    \end{equation*}
\end{lemma}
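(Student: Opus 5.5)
The plan is a two-case split on whether $\tau_k\geq\sigma$, reducing the claim directly to Lemma~\ref{lem:nonconvex-descent}. Both iterations $k$ and $k+1$ are nonterminal. By Proposition~\ref{prop:nonconvex-well-defined}, $\mathcal R_{\lambda_k}(x_k)>0$ and $\mathcal R_{\lambda_{k+1}}(x_{k+1})>0$, so $\tau_k$ and $\tau_{k+1}$ are well defined real numbers. Positive parts are nonnegative, so it suffices to exhibit one of the two terms that is at least $\sigma$.

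\emph{Case 1: $\tau_k\geq\sigma$.} Since $\sigma>0$, we get $[\tau_k]_+=\tau_k\geq\sigma$. Together with $[\tau_{k+1}]_+\geq0$, this already gives the claim.

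\emph{Case 2: $\tau_k<\sigma$.} Unwinding the definition of $\tau_k$ and using $\lambda_k\mathcal R_{\lambda_k}(x_k)>0$ gives $\delta_k<\frac{\sigma\lambda_k}{2}\mathcal R_{\lambda_k}(x_k)$. This is exactly the reset condition in Algorithm~\ref{alg:nonconvex-full}, so the algorithm returns with $z_k=y_k$. Since the coupling step sets $x_{k+1}=z_k$, we get $x_{k+1}=y_k$. Lemma~\ref{lem:nonconvex-descent} already records this: if $\tau_k<\sigma$ then $x_{k+1}=y_k$, and then $\tau_{k+1}\geq1$ because step $k+1$ is nonterminal. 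The underlying reason is the descent inequality~\eqref{eq:gap-descent} at base point $x_{k+1}=y_k$ with the accepted stepsize $\lambda_{k+1}$, which passes~\eqref{eq:local-rs-test}. It gives
\[
\delta_{k+1}=\Phi(y_k)-\Phi(\widehat y_{k+1})\geq\frac{\lambda_{k+1}}{2}\mathcal R_{\lambda_{k+1}}(x_{k+1}).
\]
Hence $[\tau_{k+1}]_+=\tau_{k+1}\geq1>\sigma$, and the sum is at least $\sigma$.

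The only point that needs care is Case 2. There I must confirm that $\delta_{k+1}$ compares $\Phi(y_k)$ with the new BPG point computed from the base point $x_{k+1}=y_k$. The recursion in Backtracking keeps both $y$ and $x=z$ fixed across stepsize reductions, so this holds. Since Lemma~\ref{lem:nonconvex-descent} is available, this reduces to citing it. Beyond that, the argument is bookkeeping.
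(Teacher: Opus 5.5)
Your proof is correct and follows the paper's argument: if $\tau_k\geq\sigma$ the first term already gives the bound, and otherwise the reset forces $x_{k+1}=y_k$, so Lemma~\ref{lem:nonconvex-descent} gives $\tau_{k+1}\geq1>\sigma$. Your additional checks (that $\tau_k,\tau_{k+1}$ are well defined and that backtracking keeps the base point fixed) spell out details that lemma already covers, and they do no harm.
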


\begin{proof}
    If $\tau_k\geq\sigma$, its first term suffices.
    Otherwise, the next iteration is safe and $\tau_{k+1}\geq1>\sigma$ by Lemma~\ref{lem:nonconvex-descent}.
\end{proof}

\begin{proof}[Proof of the second inequality
        in~\eqref{eq:nonconvex-posterior}] Summing Lemma~\ref{lem:nonconvex-pair} over $k=1,\ldots,N-1$ gives
    \begin{equation*}
        2\sum_{k=1}^N[\tau_k]_+\geq\sigma(N-1).
    \end{equation*}
    The stepsize lower bound therefore implies
    \begin{equation*}
        \sum_{k\in\mathcal A_N}\tau_k\lambda_k \geq \underline\lambda\sum_{k=1}^N[\tau_k]_+ \geq \tfrac12\sigma\underline\lambda(N-1).
    \end{equation*}
    Substitute this lower bound into the first inequality in~\eqref{eq:nonconvex-posterior}.
    A residual at most $\varepsilon^2$ thus requires at most
    \begin{equation*}
        1+ \left\lceil \frac{4(\Phi(y_0)-\Phi^\star)} {\sigma\underline\lambda\varepsilon^2} \right\rceil
    \end{equation*}
    outer iterations, unless stationarity is returned earlier.
\end{proof}

\begin{corollary}[A sharper counting bound]
    \label{cor:nonconvex-sharp}
    With $c_\sigma=\min\{\sigma,1/2\}$, after any $N\geq1$ nonterminal iterations,
    \begin{equation*}
        \sum_{k=1}^N[\tau_k]_+\geq c_\sigma N,
        \qquad
        \min_{k\in\mathcal A_N}
        \mathcal R_{\lambda_k}(x_k)
        \leq
        \frac{2(\Phi(y_0)-\Phi^\star)}
        {c_\sigma\underline\lambda N}.
    \end{equation*}
\end{corollary}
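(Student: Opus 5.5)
The plan is to sharpen the counting in Lemma~\ref{lem:nonconvex-pair} by a greedy block decomposition instead of summing overlapping pairs. Lemma~\ref{lem:nonconvex-descent} gives two facts. First, the first iteration is safe ($x_1=z_0=y_0$), so $\tau_1\geq1$. Second, whenever $\tau_k<\sigma$ and iteration $k+1$ is nonterminal, $\tau_{k+1}\geq1$. I will partition $\{1,\ldots,N\}$ into consecutive blocks scanned from left to right:
\begin{itemize}
    \item a singleton $\{k\}$ whenever $\tau_k\geq\sigma$;
    \item a pair $\{k,k+1\}$ whenever $\tau_k<\sigma$ and $k<N$, in which case $\tau_{k+1}\geq1$;
    \item a final singleton $\{N\}$ with $\tau_N<\sigma$, which is the only leftover case.
\end{itemize}

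Each singleton of the first kind contributes at least $\sigma\geq c_\sigma$ per index. Each pair contributes $[\tau_k]_++[\tau_{k+1}]_+\geq1=2\cdot\tfrac12\geq 2c_\sigma$, that is, at least $c_\sigma$ per index. Hence the only obstacle is the possible bad terminal index $N$ with $[\tau_N]_+<\sigma$.

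To handle it, I will absorb it into earlier blocks using the surplus available there. If $N=1$, then $\tau_1\geq1\geq c_\sigma$ and the bound holds directly. Otherwise, the key observation is that index $1$ always starts a singleton block with $[\tau_1]_+\geq1$. Since $1-c_\sigma\geq 1/2\geq c_\sigma$, this block carries surplus at least $c_\sigma$ beyond its own quota. That surplus is exactly enough to cover the deficit of the bad final index, which contributes $[\tau_N]_+\geq0$ against a quota of $c_\sigma$. Summing all blocks then gives $\sum_{k=1}^N[\tau_k]_+\geq c_\sigma N$. Index~$1$ cannot be the first element of a pair because $\tau_1\geq1\geq\sigma$, so it is always a singleton and the case $N\geq2$ needs no further split.

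Finally, for the residual bound, I will use $\lambda_k\geq\underline\lambda$ from Proposition~\ref{prop:accepted-lambda-lower}. Since $\tau_k>0$ exactly on $\mathcal A_N$, this gives
\[
\sum_{k\in\mathcal A_N}\tau_k\lambda_k\geq\underline\lambda\sum_{k=1}^N[\tau_k]_+\geq c_\sigma\underline\lambda N.
\]
Substituting this into the first inequality of~\eqref{eq:nonconvex-posterior} yields the claimed bound. The step I expect to need the most care is the boundary bookkeeping: the greedy partition must be well defined, and the surplus of index~$1$ must be accounted for exactly once even when the first block is itself followed immediately by the bad terminal index.
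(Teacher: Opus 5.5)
Your proof is correct and is essentially the paper's argument. Your left-to-right blocks reproduce the paper's pairing of each bad iteration with its safe successor, and borrowing the surplus of index~1 for a bad final index is exactly the paper's step of pairing the last bad iteration with the first one; the only cosmetic difference is that you budget $c_\sigma$ per index, whereas the paper counts $b+\sigma(N-2b)\geq\min\{\sigma,1/2\}N$ directly.
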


\begin{proof}
    Call an iteration bad when $\tau_k<\sigma$.
    The first iteration is not bad, and each bad iteration other than the last is followed by a safe iteration with $\tau\geq1$.
    Pair each such bad iteration with its successor.
    If the last iteration is bad, pair it with the first iteration instead.
    No pairs overlap, since bad iterations cannot be consecutive and the first iteration is safe.

    If there are $b$ bad iterations, each of the $b$ pairs contributes at least one to the sum, and each of the remaining $N-2b$ iterations contributes at least $\sigma$.
    Therefore the sum is at least
    \begin{equation*}
        b+\sigma(N-2b) \geq \min\{\sigma,1/2\}N.
    \end{equation*}
    Combine this with the stepsize lower bound and the first inequality in~\eqref{eq:nonconvex-posterior}.
\end{proof}

\subsection{Number of BPG solves}
\label{app:nonconvex-cost}

\begin{corollary}[Amortized backtracking cost]
    \label{cor:nonconvex-cost}
    Through $N$ nonterminal outer iterations,
    \begin{equation*}
        N_{\mathrm{BPG}}(N) \leq N+ \frac{ (N-1)\log\gamma_+ +\log(\lambda_0/\underline\lambda) }{\log\gamma_-}.
    \end{equation*}
    Consequently, the BPG-solve complexity for $\mathcal R_{\lambda_k}(x_k)\leq\varepsilon^2$ is $\O(\varepsilon^{-2})$.
\end{corollary}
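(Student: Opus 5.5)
The plan is to count BPG solves by separating successful trials from failed upper-bound tests. In GA-BPGnc, every outer iteration ends with exactly one accepted BPG trial: the criterion and mirror-update steps make no additional BPG solves. Each failed upper-bound test costs one extra solve before the stepsize is reduced. The iteration also holds $x_k$ fixed while backtracking, so the only source of extra solves is the count $m_k$ of failed tests. Hence
\begin{equation*}
    N_{\mathrm{BPG}}(N)=\sum_{k=1}^N(1+m_k)=N+\sum_{k=1}^N m_k .
\end{equation*}

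Next I will invoke \eqref{eq:total-backtracking} from Proposition~\ref{prop:accepted-lambda-lower}. That bound was derived precisely for the GA-BPGnc initialization $\widetilde\lambda_k=\min\{\gamma_+\lambda_{k-1},\lambda_{\max}\}$, which satisfies $\widetilde\lambda_k\leq\gamma_+\lambda_{k-1}$. Substituting it gives the displayed bound on $N_{\mathrm{BPG}}(N)$ directly.

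The only point needing care is that the stopping convention introduces no hidden solves. In GA-BPGnc the stationarity test is "a BPG trial gives $\widehat y=x$". This test uses the solve already counted and does not add one. I would also note that Proposition~\ref{prop:nonconvex-well-defined} guarantees each outer iteration finishes after finitely many trials, so the count is meaningful.

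For the complexity statement, I will combine this bound with Theorem~\ref{thm:nonconvex-main}. Reaching $\min_{k\in\mathcal A_N}\mathcal R_{\lambda_k}(x_k)\leq\varepsilon^2$ takes
\begin{equation*}
    N\leq 1+\left\lceil \frac{4(\Phi(y_0)-\Phi^\star)}{\sigma\underline\lambda\varepsilon^2}\right\rceil=\O(\varepsilon^{-2})
\end{equation*}
outer iterations. Since $\log\gamma_+/\log\gamma_-$ and $\log(\lambda_0/\underline\lambda)/\log\gamma_-$ are constants independent of $\varepsilon$, the bound on $N_{\mathrm{BPG}}(N)$ is affine in $N$. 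It is therefore $\O(\varepsilon^{-2})$ as well.

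No step is a real obstacle. The mildest subtlety is confirming that Proposition~\ref{prop:accepted-lambda-lower} applies with the $\lambda_{\max}$ cap. It does, because the cap only lowers the initial trial, so $\lambda_k\leq\gamma_+\lambda_{k-1}\gamma_-^{-m_k}$ still holds.
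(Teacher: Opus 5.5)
Your proposal is correct and follows the paper's proof essentially verbatim. Like the paper, you write $N_{\mathrm{BPG}}(N)=N+\sum_{k=1}^N m_k$, apply \eqref{eq:total-backtracking}, note that the mirror candidate costs no BPG solve, and combine this affine-in-$N$ bound with the $\O(\varepsilon^{-2})$ outer-iteration count. The paper also remarks that a terminal (stationarity-returning) iteration obeys the same backtracking estimate with its actual number of trials, which your observation about the stopping test essentially covers.
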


\begin{proof}
    Each outer iteration performs one successful upper-bound trial and $m_k$ failed trials, so
    \begin{equation*}
        N_{\mathrm{BPG}}(N)=N+\sum_{k=1}^Nm_k.
    \end{equation*}
    Apply~\eqref{eq:total-backtracking} and the outer-iteration bound.
    A terminal iteration satisfies the same backtracking estimate with its actual number of trials.
    Constructing the next mirror candidate requires no additional BPG subproblem solve.
\end{proof}

\subsection{Relation to norm-based stationarity}
\label{app:nonconvex-stationarity}

We use $\dist(x,S)\coloneq\inf_{u\in S}\|x-u\|$ for the Euclidean distance from $x$ to a set $S$.

\begin{proposition}[Subgradient comparison]
    \label{prop:nonconvex-subgradient}
    Let
    \begin{equation*}
        s_k= g_{\lambda_k}(x_k) +\nabla f(\widehat y_k)-\nabla f(x_k).
    \end{equation*}
    Then $s_k\in\nabla f(\widehat y_k)+\partial\rho(\widehat y_k)$.
    If a uniform constant $\Gamma>0$ satisfies $\|s_k\|^2\leq \Gamma\mathcal R_{\lambda_k}(x_k)$ for $k\in\mathcal A_N$, then
    \begin{equation*}
        \min_{k\in\mathcal A_N}
        \dist^2
        (
        0,\nabla f(\widehat y_k)+\partial\rho(\widehat y_k)
        )
        \leq
        \frac{4\Gamma(\Phi(y_0)-\Phi^\star)}
        {\sigma\underline\lambda(N-1)},
        \qquad N\geq2.
    \end{equation*}
\end{proposition}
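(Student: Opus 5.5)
The membership claim is immediate from Lemma~\ref{lem:bpg-subgradient} applied at the pair $(x_k,\lambda_k)$, since $\widehat y_k\in T_{\lambda_k}(x_k)$ by construction. In the notation of that lemma, $s_k=s_{\lambda_k}(x_k)$, so $s_k\in\nabla f(\widehat y_k)+\partial\rho(\widehat y_k)$. Consequently, for every $k\in\mathcal A_N$,
\begin{equation*}
    \dist^2\bigl(0,\nabla f(\widehat y_k)+\partial\rho(\widehat y_k)\bigr)\leq\|s_k\|^2\leq\Gamma\mathcal R_{\lambda_k}(x_k),
\end{equation*}
where the last inequality is the assumed uniform comparison.

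Next I will take the minimum over $k\in\mathcal A_N$ on both sides. This set is nonempty: the first iteration is safe, so Lemma~\ref{lem:nonconvex-descent} gives $\tau_1\geq1$ and hence $\delta_1>0$. Since $\Gamma>0$ is constant, the minimum of the right-hand side equals $\Gamma\min_{k\in\mathcal A_N}\mathcal R_{\lambda_k}(x_k)$. The left-hand minimum is at most this quantity, because the pointwise bound holds for each index in $\mathcal A_N$.

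Finally, I will invoke the second inequality in~\eqref{eq:nonconvex-posterior} from Theorem~\ref{thm:nonconvex-main}. It applies because the iterations are nonterminal and $N\geq2$, and it gives $\min_{k\in\mathcal A_N}\mathcal R_{\lambda_k}(x_k)\leq 4(\Phi(y_0)-\Phi^\star)/(\sigma\underline\lambda(N-1))$. Multiplying this by $\Gamma$ yields the claim.

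There is no genuine obstacle in this argument. The only point needing care is that the minima are taken over the same index set $\mathcal A_N$ on which the comparison hypothesis is assumed, and that $\mathcal A_N$ is nonempty so that the minimum is well defined.
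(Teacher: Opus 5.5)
Your proposal is correct and follows the paper's proof essentially step for step. Membership comes from Lemma~\ref{lem:bpg-subgradient}, the squared distance is bounded by $\|s_k\|^2$, and the assumed comparison is combined with the second inequality in~\eqref{eq:nonconvex-posterior}. Your check that $\mathcal A_N$ is nonempty (via $\tau_1\geq1$) is a small detail the paper handles inside the proof of Theorem~\ref{thm:nonconvex-main} rather than here.
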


\begin{proof}
    Membership follows from Lemma~\ref{lem:bpg-subgradient}.
    The squared distance is at most $\|s_k\|^2$.
    Apply the assumed uniform comparison and the second inequality in~\eqref{eq:nonconvex-posterior}.
\end{proof}

The uniform comparison with $\Gamma$ is an additional assumption beyond upper relative smoothness.

\begin{corollary}[Euclidean residual]
    \label{cor:nonconvex-euclidean}
    If $\psi=\|\cdot\|^2/2$ and $\Omega=\R^n$, then, for every $x\in\dom\rho$ and $y\in T_\lambda(x)$,
    \begin{equation*}
        \| \frac{x-y}{\lambda} \|^2 \leq \mathcal R_\lambda(x).
    \end{equation*}
    If also $\rho=0$, then $\mathcal R_\lambda(x)=\|\nabla f(x)\|^2$.
    Thus the nonconvex main-text bounds recover the corresponding bounds on the squared gradient norm in the smooth unconstrained case.
\end{corollary}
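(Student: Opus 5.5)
We prove the two claims of Corollary~\ref{cor:nonconvex-euclidean} separately, starting from the definition of $\mathcal R_\lambda$ and using that in Euclidean geometry every Bregman divergence of $\psi$ is a squared half-distance. With $\psi=\|\cdot\|^2/2$ and $\Omega=\R^n$ we have $D_\psi(u,v)=\|u-v\|^2/2$ for all $u,v$, and $g_\lambda(x)=(x-y)/\lambda$ for $y\in T_\lambda(x)$.

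For the inequality, we invoke Proposition~\ref{prop:bpg-model-gap}. Its hypothesis $x\in\Omega\cap\dom\rho$ reduces here to $x\in\dom\rho$. It gives
\begin{equation*}
    \mathcal R_\lambda(x)\geq\frac{2}{\lambda^2}D_\psi(x,y)=\frac{1}{\lambda^2}\|x-y\|^2 .
\end{equation*}
The right-hand side is exactly $\|(x-y)/\lambda\|^2$, which is the claim. No upper-bound test is needed, since the residual lower bound in Proposition~\ref{prop:bpg-model-gap} uses only optimality of $y$ and convexity of $\rho$.

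For the case $\rho=0$, the subproblem is unconstrained and strongly convex, so its solution is explicit: $y=x-\lambda\nabla f(x)$. We substitute this into the definition of $\mathcal R_\lambda$. This gives $\langle\nabla f(x),y-x\rangle=-\lambda\|\nabla f(x)\|^2$ and $\lambda^{-1}D_\psi(y,x)=\lambda\|\nabla f(x)\|^2/2$, while the $\rho$ terms vanish. Hence
\begin{equation*}
    \mathcal R_\lambda(x)=-\frac{2}{\lambda}\left(-\frac{\lambda}{2}\|\nabla f(x)\|^2\right)=\|\nabla f(x)\|^2 .
\end{equation*}
Inserting this identity into Theorem~\ref{thm:nonconvex-main} turns the residual bound into a bound on $\min_{k\in\mathcal A_N}\|\nabla f(x_k)\|^2$, which recovers the standard squared-gradient-norm bound.

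There is no real obstacle. The only points needing care are that Proposition~\ref{prop:bpg-model-gap} applies at every $x\in\dom\rho$ because $\Omega=\R^n$, and that the unique minimizer in the case $\rho=0$ is the explicit gradient step, which follows from Proposition~\ref{prop:bpg-unique} via $g_\lambda(x)-\nabla f(x)=0$.
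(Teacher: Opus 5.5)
Your proposal is correct and matches the paper's proof: the inequality is \eqref{eq:gap-lower} with $D_\psi(x,y)=\|x-y\|^2/2$, and for $\rho=0$ you substitute the explicit step $y=x-\lambda\nabla f(x)$ into the definition of $\mathcal R_\lambda$. Your remark that the first part needs no upper-bound test, only optimality of $y$ and convexity of $\rho$, is also accurate.
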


\begin{proof}
    The first assertion follows from~\eqref{eq:gap-lower} with $D_\psi(x,y)=\|x-y\|^2/2$.
    When $\rho=0$, $T_\lambda(x)=\{x-\lambda\nabla f(x)\}$, and direct substitution gives $\mathcal R_\lambda(x)=\|\nabla f(x)\|^2$.
\end{proof}

The stationarity bounds concern interior evaluation points.
Convergence of the full sequence and stationarity at boundary accumulation points require separate analysis.

\section{Additional numerical experiments and more discussions}
\label{app:additional-experiments-discussion}

\subsection{Additional experiments}
\label{app:additional-experiments}

\paragraph{Convex D-optimal design.}
Following \citet{hanzely2021accelerated}, we solve $\min_{x\in\Delta_n}\Phi(x)\coloneq-\log\det(H\Diag(x)H^\top)$, where $\Diag(x)$ is the diagonal matrix with diagonal $x$ and $\Delta_n=\{x\in\R^n:x\geq0,\ \mathbf{1}^\top x=1\}$.
The columns of $H\in\R^{m\times n}$ are the raw LIBSVM \texttt{abalone} feature vectors ($m=8$, $n=4{,}177$).
All methods use the Burg geometry $\psi(x)=-\sum_i\log x_i$ and start at $x_0=\mathbf{1}/n$; GA-BPGc uses the auxiliary negative entropy $\phi(x)=\sum_i(x_i\log x_i-x_i)$.
Figure~\ref{fig:convex-abalone} compares GA-BPGc, BPG-LS, and ABPG-g over five timing runs, each capped at $5{,}000$ iterations or $1{,}800$ seconds, showing medians and interquartile ranges.
The deterministic iteration traces coincide across repetitions.
Separate long runs of BPG, BPG-LS, and ABPG-g provide $x_{\mathrm{ref}}$ and the certified lower bound $\Phi_{\mathrm{LB}}=\Phi(x_{\mathrm{ref}})-\langle\nabla\Phi(x_{\mathrm{ref}}),x_{\mathrm{ref}}\rangle+\min_j[\nabla\Phi(x_{\mathrm{ref}})]_j$.
We use $\Phi_{\mathrm{ref}}=\Phi_{\mathrm{LB}}$.
The dotted curve uses the right-hand side of~\eqref{eq:convex-posterior} with $u=x_{\mathrm{ref}}$, adding $\Phi(x_{\mathrm{ref}})-\Phi_{\mathrm{LB}}$ before applying the same normalization.

\begin{figure}[t]
    \centering
    \includegraphics[width=0.9\linewidth]{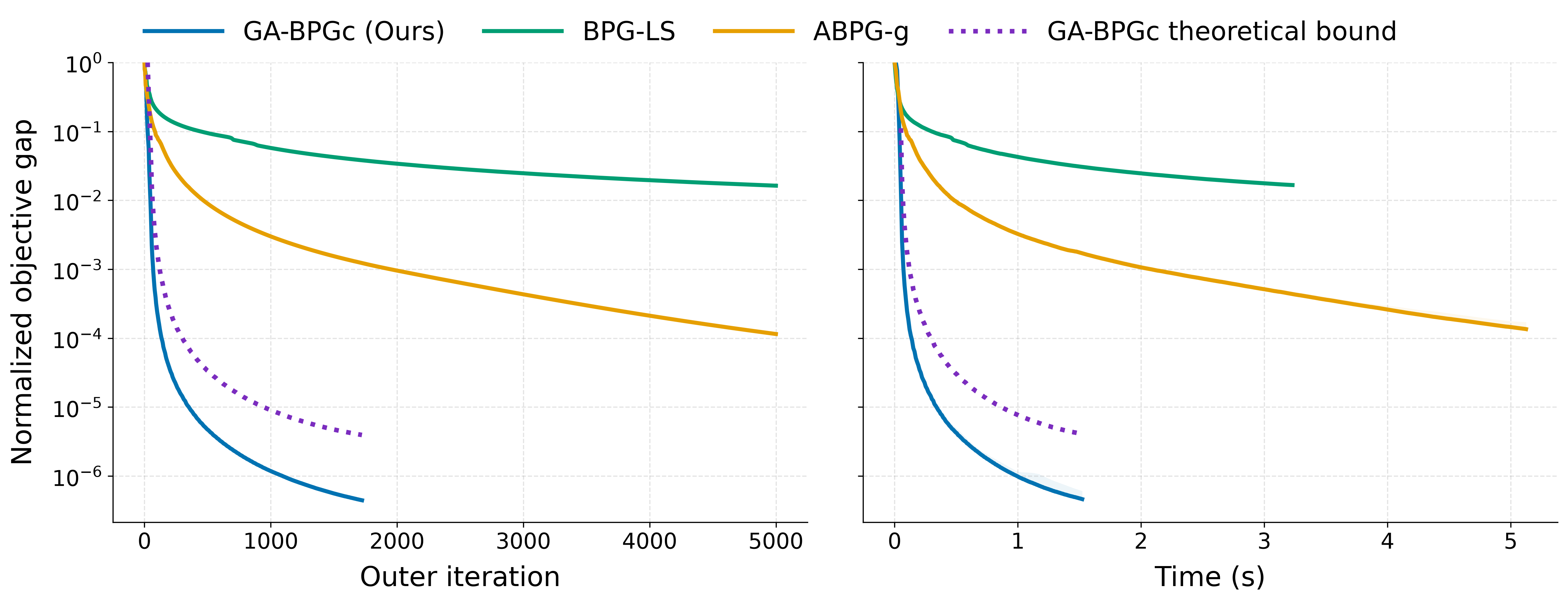}
    \caption{Log plot of the normalized objective gap for D-optimal design on the \texttt{abalone} data.}
    \label{fig:convex-abalone}
\end{figure}

\paragraph{Nonconvex phase retrieval.}
We consider the normalized $\ell_1$-regularized phase-retrieval problem $\min_{x\in\R^d}\Psi(x)\coloneq\frac{1}{4m}\sum_{r=1}^{m}\left(\langle a_r,x\rangle^2-b_r\right)^2+\frac{1}{m}\|x\|_1.$
We generate five independent instances (five seeds) with $d=1000$, $m=6000$, independent $a_r\sim\mathcal{N}(0,I)$, and noiseless measurements $b_r=\langle a_r,x^\star\rangle^2$ of a $5\%$-sparse Gaussian signal.
All algorithms share each instance's spectral initializer $x_0$ \citep{candes2015phase}.
We compare GA-BPGnc with BPG, BPGe, CoCaIn-BPG, and BPDCAe~\citep{takahashi2022new}.
All algorithms use $\psi(x)=\phi(x)=\frac{1}{4}\|x\|^4+\frac{1}{2}\|x\|^2$.
BPG and BPGe use $\lambda=0.1/L$ with a Gaussian-based estimate $L$; BPDCAe uses the deterministic bound \citep{takahashi2022new}.
GA-BPGnc and BPDCAe use $\beta_k=(t_{k-1}-1)/t_k$ and $t_k=(1+(1+4t_{k-1}^2)^{1/2})/2$.
GA-BPGnc and CoCaIn-BPG use their respective backtracking rules.
Runs are capped at $2{,}000$ iterations; only GA-BPGnc stops early upon meeting its stationarity criterion.
Figure~\ref{fig:phase-retrieval} plots normalized empirical objective gaps relative to $\Psi_{\mathrm{ref}}$.
Across all five instances, GA-BPGnc stops after $61$--$97$ iterations, reaching the objective gap faster than all baselines in iterations and wall-clock time.

\begin{figure}[!tp]
    \centering
    \includegraphics[width=0.9\linewidth]{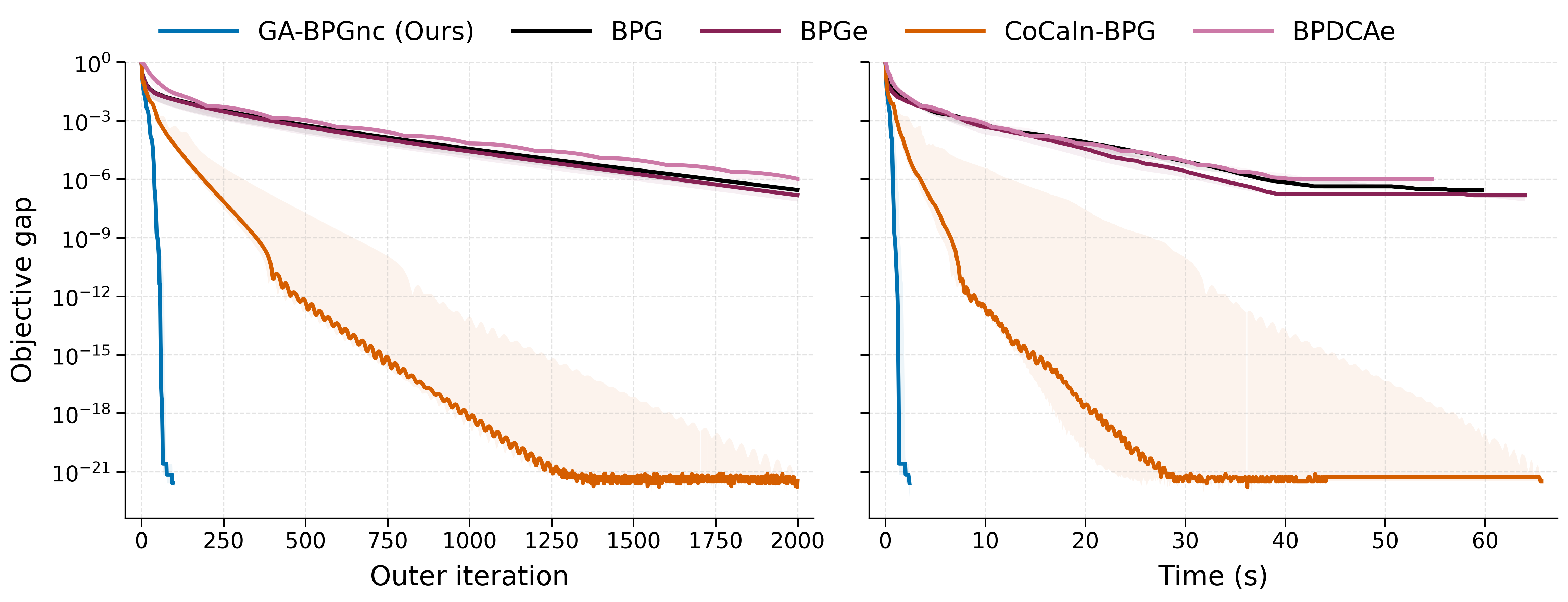}
    \caption{Log plot of the normalized objective gap for nonconvex $\ell_1$-regularized phase retrieval.}
    \label{fig:phase-retrieval}
\end{figure}

\paragraph{Nonconvex linear inverse problem.}
We solve $\min_{x\in\R^n}\Phi(x)\coloneq \sum_{i=1}^{m}\log\!\left(1+((Ax-b)_i)^2\right)+10^{-3}\|x\|_1$, where $A\in\R^{m\times n}$ has independent $\mathcal{N}(0,1/m)$ entries ($m=1000$, $n=2000$) and $b=Ax^\star+\varepsilon$.
The vector $x^\star$ has $10\%$ nonzero entries drawn independently from $\mathcal{N}(0,50^2)$, and $\varepsilon$ is Gaussian noise with standard deviation $10^{-3}$.
The smooth term has a $2\|A\|_2^2$-Lipschitz gradient, where $\|A\|_2$ denotes the spectral norm.
Figure~\ref{fig:nonconvex-linear-inverse} compares GA-BPGnc, BPG, BPGe, and CoCaIn-BPG using Euclidean geometry and $2{,}000$ outer iterations from $x_0=0$.
We report medians and interquartile ranges of the normalized empirical objective gap over five independently generated $(A,x^\star,\varepsilon)$ instances; $\Phi_{\mathrm{ref}}$ is each instance's minimum objective across $5{,}000$-iteration runs of all four methods.

\begin{figure}[t]
    \centering
    \includegraphics[width=0.9\linewidth]{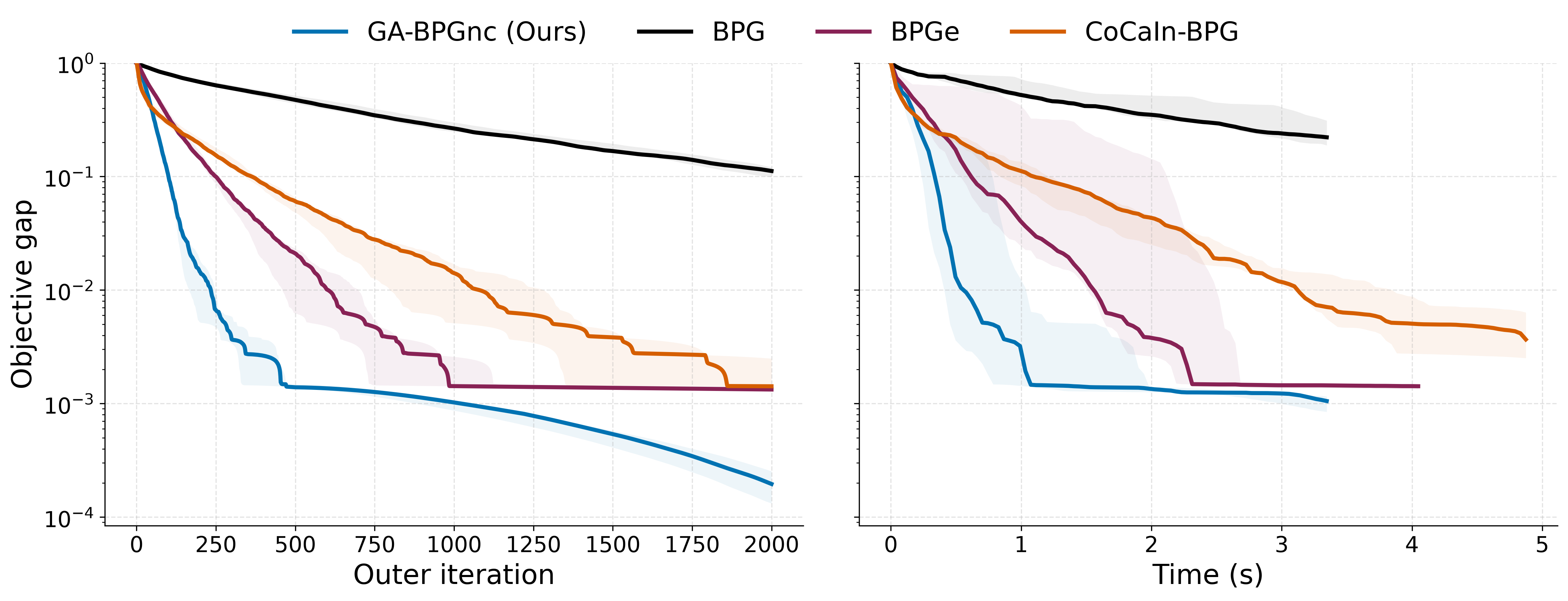}
    \caption{Log plot of the normalized objective gap for the nonconvex linear inverse problem.}
    \label{fig:nonconvex-linear-inverse}
\end{figure}

\subsection{Discussion}
\label{app:discussions}

In the convex and relatively strongly convex experiments, GA-BPGc and GA-BPGsc outperform nonaccelerated BPG-LS and accelerated ABPG-g despite using smaller median proximal stepsizes.

GA-BPGnc also outperforms CoCaIn-BPG, which uses adaptive stepsizes, in all three nonconvex experiments.
Ablation experiments would be needed to measure the separate contributions of stepsize adaptation, extrapolation, and resets.

\end{document}